\documentclass[12pt]{amsart}

\usepackage{latexsym,amsmath,amssymb,amsthm,amsfonts}

\usepackage{bbding}

\textheight=23.5cm \textwidth=17cm \hoffset=-2.5cm \voffset=-1cm

\linespread{1.48}

\usepackage[numeric]{amsrefs}

\newcommand{\R}{{\mathbb{R}}}
\newcommand{\eps}{\varepsilon}

\renewcommand{\P}{{\mathcal{P}}}
\newcommand{\Q}{{\mathcal{Q}}}
\newcommand{\x}{{\boldsymbol{x}}}
\newcommand{\bxi}{{\boldsymbol{\xi}}}
\newcommand{\bmu}{{\boldsymbol{\mu}}}
\renewcommand{\u}{{\boldsymbol{u}}}
\renewcommand{\v}{{\boldsymbol{v}}}
\newcommand{\n}{{\boldsymbol{n}}}
\newcommand{\y}{{\boldsymbol{y}}}
\renewcommand{\v}{{\boldsymbol{v}}}
\newcommand{\z}{{\boldsymbol{z}}}
\newcommand{\zero}{{\boldsymbol{0}}}

\newcommand{\wt}{\widetilde}
\newcommand{\Vol}{{\rm Vol}}
\newcommand{\diam}{{\rm diam}}

\newcommand{\tx}{{\wt\x}}

\def\be{\begin{equation}}
\def\ee{\end{equation}}

\DeclareMathOperator*{\esssup}{ess\,sup}

\newtheorem{theorem}{Theorem}[section]
\newtheorem{lemma}[theorem]{Lemma}
\newtheorem{conjecture}[theorem]{Conjecture}
\newtheorem{corollary}[theorem]{Corollary}
\newtheorem{observation}[theorem]{Observation}
\theoremstyle{remark}
\newtheorem{remark}[theorem]{Remark}
\newtheorem{example}[theorem]{Example}
\newtheorem{definition}[theorem]{Definition}

\renewcommand{\phi}{\varphi}

\numberwithin{equation}{section}

\title{On Nikol'skii inequalities for domains in $\R^d$}
\author{Z.\ Ditzian}
\address{Department of Mathematical and Statistical Sciences, University of Alberta, Edmonton, AB, T6G2G1, Canada}
\email{zditzian@shaw.ca, zditzian@math.ualberta.ca}

\author{A.\ Prymak}
\address{Department of Mathematics, University of Manitoba, Winnipeg, MB, R3T2N2, Canada}
\email{prymak@gmail.com}
\thanks{The second author was supported by NSERC of Canada Discovery Grant RGPIN 342218.}

\keywords{Nikol'skii inequality, Christoffel function, convex domains}

\subjclass[2010]{41A17, 41A63, 26C05, 41A10}

\begin{document}

\begin{abstract}
Nikol'skii inequalities for various sets of functions, domains and weights will be discussed. Much of the work is dedicated to the class of algebraic polynomials of total degree $n$ on a bounded convex domain $D$. That is, we study $\sigma:= \sigma(D,d)$ for which
\[
\|P\|_{L_q(D)}\le c n^{\sigma(\frac1p-\frac1q)}\|P\|_{L_p(D)},\quad 0<p\le q\le\infty,
\]
where $P$ is a polynomial of total degree $n$. We use geometric properties of the boundary of $D$ to determine $\sigma(D,d)$ with the aid of comparison between domains. Computing the asymptotics of the Christoffel function of various domains is crucial in our investigation. The methods will be illustrated by the numerous examples in which the optimal $\sigma(D,d)$ will be computed explicitly.
\end{abstract}

\maketitle

\section{Introduction}

Nikol'skii inequalities have many uses and were investigated in many articles (see~\cite{Di14}, \cite{DiPrRMMJ}, \cite{DiTi}, \cite{Ka}, \cite{Ne-To} and~\cite{Ni} for example). In most texts on approximation (see~\cite[Theorem~2.6, p.~102]{De-Lo} and~\cite[4.3.6, p.~130]{Tr-Be} for example) one finds the classical cases of such inequalities given for $0<p\le q\le \infty$ by
\be\label{1.1}
\|P\|_{L_q[-1,1]}\le c n^{2(\frac1p -\frac1q)}\|P\|_{L_p[-1,1]}, \quad P\in{\rm span}\,\{1,\dots,x^{n-1}\}
\ee
and
\be\label{1.2}
\|\tau\|_{L_q[-\pi,\pi]}\le c n^{(\frac1p-\frac1q)}\|\tau\|_{L_p[-\pi,\pi]}, \quad \tau\in{\rm span}\,\{e^{ikx}:|k|<n\}.
\ee

In other investigations different sets of functions, different domains and different measures were explored.

Our main goal will be to establish the relation between $\|P\|_{L_q(D)}$ and $\|P\|_{L_p(D)}$ for many bounded domains in $\R^d$ where $P\in\P_n=\P_{n,d}$ is the set of algebraic polynomials of total degree $\le n$. For this purpose we develop methods to study $\sigma:= \sigma(D,d)$ in relation to $D$ and $d$ for which
\be\label{1.3}
\|P\|_{L_q(D)}\le c n^{\sigma(\frac1p-\frac1q)}\|P\|_{L_p(D)},\quad 0<p\le q\le\infty,
\ee
where $P\in\P_n$. Here and elsewhere in this paper when $p=q$ one has the obvious inequality. We do not wish to exclude the correct case $p=q$.

On our way we will obtain results of more general form for different weights or sets of functions which we use later. We hope these more general results will be useful in the future.

In Section~\ref{sect-gen} we obtain some general results relating $L_p$ and $L_q$ norms for some finite dimensional space of functions. In Section~\ref{sect-chris} we describe the relation of Nikol'skii inequalities with estimates of the Christoffel functions. In Section~\ref{sect-christ-ball-cube} we use the results on the unit ball and the cube to deduce results on $\sigma$ (see~\eqref{1.3}) for many domains. In Section~\ref{sect-upper} we introduce the new method of extension to obtain upper estimates for the behaviour of $\sigma$ that fit more domains. In Section~\ref{sect-lower} we obtain additional results to establish the lower estimate of $\sigma$. In Section~\ref{sect-examples} we examine many examples in which we make use of Sections~\ref{sect-gen}, \ref{sect-chris} and \ref{sect-christ-ball-cube}. An effort is made to establish sharp estimates of $\sigma$, that is, to show that for some polynomials $P$ of total degree $n$, some $c_1$ and some $p$ and $q$, $p<q$ (mainly $2$ and $\infty$),
\[
\|P\|_{L_q(D)}\ge c_1 n^{\sigma(\frac1p-\frac1q)}\|P\|_{L_p(D)}
\]
with $\sigma=\sigma(D,d)$ of~\eqref{1.3}. The behavior of the Christoffel function on $l_\alpha$ balls in $\R^d$, $1\le \alpha\le\infty$, is computed in Section~\ref{sect-lp}. Further applications of our technique are given in Section~\ref{sect-sharp}.

We will use the notation $\varphi(n)\approx\psi(n)$ to indicate that $c^{-1}\psi(n)\le\varphi(n)\le c\psi(n)$ with some positive $c$ independent of $n$.

\section{General results}\label{sect-gen}

In this section we obtain some general results. We begin with the following basic result/observation.

\begin{theorem}\label{dt2.1}
Suppose $D$ is a domain in $\R^d$, $\mu$ a positive measure on $D$, and $S$ a subspace of essentially bounded functions on $D$. Suppose also that for some $p$, $0<p<\infty$, and some $M>0$,
\be\label{d2.1}
\|f\|_{L_\infty(D,\mu)}\le M\|f\|_{L_p(D,\mu)},\quad\text{for all } f\in S.
\ee
Then, for any $f\in S$,
\be\label{d2.2}
\|f\|_{L_\infty(D,\mu)}\le M^{p/q}\|f\|_{L_q(D,\mu)}
\quad\text{for }0<q\le p
\ee
and
\be\label{d2.3}
\|f\|_{L_r(D,\mu)}\le M^{(\frac1q-\frac1r)p}\|f\|_{L_q(D,\mu)}
\quad\text{for }0<q\le p
\text{ and }r\ge q,
\ee
where
\be\label{d2.4}
\|f\|_{L_q(D,\mu)}=\left(\int_D|f|^qd\mu\right)^{1/q}
\quad\text{for }0<q<\infty\quad\text{and}\quad
\|f\|_{L_\infty(D,\mu)}=\inf_{\mu(A)=0}\sup(|f(\x)|:\x\in D\setminus A).
\ee
\end{theorem}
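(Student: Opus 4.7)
The plan is a two-step interpolation argument built on the pointwise inequality $|f(\x)|^s \le \|f\|_{L_\infty(D,\mu)}^{s-t}|f(\x)|^t$ (valid $\mu$-a.e.\ whenever $s \ge t > 0$). For Step 1 (deducing \eqref{d2.2} from \eqref{d2.1}), I fix $q\in(0,p]$ and $f\in S$; the pointwise bound with $s=p$, $t=q$ integrates to
\[
\|f\|_{L_p(D,\mu)}^p \le \|f\|_{L_\infty(D,\mu)}^{p-q}\,\|f\|_{L_q(D,\mu)}^q,
\]
which combined with the hypothesis \eqref{d2.1} gives
\[
\|f\|_{L_\infty(D,\mu)}^p \le M^p \,\|f\|_{L_\infty(D,\mu)}^{p-q}\,\|f\|_{L_q(D,\mu)}^q.
\]
Since boundedness of $f$ on $D$ ensures $\|f\|_{L_\infty(D,\mu)}<\infty$, either $\|f\|_{L_\infty(D,\mu)}=0$ (trivial) or I may cancel $\|f\|_{L_\infty(D,\mu)}^{p-q}$ and take the $q$-th root to obtain \eqref{d2.2}.

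For Step 2 (deducing \eqref{d2.3} from \eqref{d2.2}), I take $0<q\le p$ and $r\ge q$. The same pointwise bound with $s=r$, $t=q$ integrates to
\[
\|f\|_{L_r(D,\mu)}^r \le \|f\|_{L_\infty(D,\mu)}^{r-q}\,\|f\|_{L_q(D,\mu)}^q,
\]
and inserting the estimate $\|f\|_{L_\infty(D,\mu)}\le M^{p/q}\|f\|_{L_q(D,\mu)}$ from Step 1 yields $\|f\|_{L_r(D,\mu)}^r \le M^{p(r-q)/q}\|f\|_{L_q(D,\mu)}^r$. Taking $r$-th roots produces the exponent $p(r-q)/(qr)=p(\tfrac1q-\tfrac1r)$ required in \eqref{d2.3}; the borderline $r=\infty$ reduces to \eqref{d2.2} itself.

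This is the classical Lyapunov/log-convexity interpolation, so I do not expect a genuine obstacle. The only bookkeeping points are to dispose of the trivial case $\|f\|_{L_\infty(D,\mu)}=0$ and to observe that the standing assumption that $S$ consists of bounded functions is precisely what makes $\|f\|_{L_\infty(D,\mu)}$ finite, and hence the division by $\|f\|_{L_\infty(D,\mu)}^{p-q}$ in Step~1 legitimate.
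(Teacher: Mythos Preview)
Your proof is correct and is essentially the same argument as the paper's: both use the pointwise bound $|f|^s\le\|f\|_{L_\infty(D,\mu)}^{s-t}|f|^t$ to interpolate, first with $(s,t)=(p,q)$ to derive \eqref{d2.2} from \eqref{d2.1}, then with $(s,t)=(r,q)$ together with \eqref{d2.2} to obtain \eqref{d2.3}. Your explicit handling of the trivial case $\|f\|_{L_\infty(D,\mu)}=0$ and the finiteness remark are minor additions that the paper leaves implicit.
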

\begin{proof}
We have
\[
\|f\|_{L_\infty(D,\mu)}\le M \left(\int_D|f|^pd\mu\right)^{1/p}
\le M \|f\|_{L_\infty(D,\mu)}^{(p-q)/p} \left(\int_D|f|^qd\mu\right)^{1/p}.
\]
Therefore,
\[
\|f\|_{L_\infty(D,\mu)}^{q/p}\le M \left(\int_D|f|^qd\mu\right)^{1/p},
\]
which implies~\eqref{d2.2}. Using~\eqref{d2.2}, we now have
%\[
%\|f\|_{L_r(D,\mu)}\le \|f\|_{L_\infty(D,\mu)}^{1-\frac qr}\|f\|_{L_q(D,\mu)}^{\frac qr},
%\]
%and hence
\[
\|f\|_{L_r(D,\mu)}\le \|f\|_{L_\infty(D,\mu)}^{1-\frac qr}\|f\|_{L_q(D,\mu)}^{\frac qr}
\le M^{\frac pq(1-\frac qr)}\|f\|_{L_q(D,\mu)}.
\]
\end{proof}

\begin{remark}
In the above proof we used the condition~\eqref{d2.1} for a single and exactly the same function $f$ for which we obtained~\eqref{d2.2} and~\eqref{d2.3}. Further discussion will often be relevant to the validity of~\eqref{d2.1} for a certain subspace $S$, so we included the notation $S$ here for convenience.
\end{remark}

\begin{remark}
If $\mu$ is nonsingular and $d\mu=w(\x)dx$ with a function $w(\x)>0$ a.e. in $D$, $\|f\|_{L_\infty(D,\mu)}=\|f\|_{L_\infty(D)}=\esssup_{\x\in D}|f(\x)|$. One observes that when $\mu$ is an atomic measure on $D$, that is $\mu(\x)=\sum\mu(\{\bxi_k\})\delta_{\bxi_k}(\x)$ (where $\delta_\bxi$ is the point mass at $\bxi$), $\|f\|_{L_\infty(D,\mu)}=\sup|f(\bxi_k)|$ and $\|f\|_{L_p(D,\mu)}=(\sum|f(\bxi_k)|^p\mu(\{\bxi_k\}))^{1/p}$, $0<p<\infty$, in which case Theorem~\ref{dt2.1} still applies.
\end{remark}

Recall that $\P_n=\P_{n,d}$ denotes the space of algebraic polynomials of total degree $\le n$ in $d$ variables.

\begin{corollary}\label{dc2.2}
Suppose $D\subset\R^d$ is a bounded domain, $\mu$ a finite positive measure on $D$ and suppose for some fixed $n$
\be\label{d2.5}
\|\phi\|_{L_\infty(D,\mu)}
\le c n^{\sigma/p}\|\phi\|_{L_p(D,\mu)}
\quad\text{for all }\phi\in\P_n,
\ee
where $c>0$ is independent of $n$. Then for $0<q\le p$ and $r\ge q$
\be\label{d2.6}
\|\phi\|_{L_r(D,\mu)}\le c_1 n^{\sigma(\frac1q-\frac1r)}\|\phi\|_{L_q(D,\mu)}, \quad \phi\in\P_n,
\ee
where $c_1=c_1(c,r,q)$ is a positive constant depending only on $c$, $r$ and $q$.\\
If in addition we assume~\eqref{d2.5} for all $n$, we have for $0<q\le r\le\infty$
\be\label{d2.6'}
\|\phi\|_{L_r(D,\mu)}\le c_2 n^{\sigma(\frac1q-\frac1r)}\|\phi\|_{L_q(D,\mu)}, \quad \phi\in\P_n,
\ee
where $c_2=c_2(c,r,q)$ is a positive constant depending only on $c$,  $r$ and $q$.
\end{corollary}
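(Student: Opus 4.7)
My plan is to derive both estimates by direct appeals to Theorem~\ref{dt2.1}. For~\eqref{d2.6}, the hypothesis~\eqref{d2.5} is precisely~\eqref{d2.1} with $S=\P_n$ and $M=cn^{\sigma/p}$. Substituting this $M$ into the conclusion~\eqref{d2.3} yields
\[
\|\phi\|_{L_r(D,\mu)} \le c^{(\frac1q-\frac1r)p}\,n^{\sigma(\frac1q-\frac1r)}\,\|\phi\|_{L_q(D,\mu)}
\]
for all $0<q\le p$ and $r\ge q$, which is~\eqref{d2.6} with $c_1=c^{(\frac1q-\frac1r)p}$.

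For~\eqref{d2.6'}, the subcase $0<q\le p$ is already~\eqref{d2.6}. The remaining subcase is $p<q\le r\le\infty$, for which I propose the standard power trick: because~\eqref{d2.5} is now assumed for every $n$, I apply it to $\phi^m\in\P_{mn}$, with $m:=\lceil q/p\rceil$ chosen so that $mp\ge q$. Using $\|\phi^m\|_{L_s(D,\mu)}=\|\phi\|_{L_{ms}(D,\mu)}^m$, this gives
\[
\|\phi\|_{L_\infty(D,\mu)}^m \le c(mn)^{\sigma/p}\,\|\phi\|_{L_{mp}(D,\mu)}^m.
\]
Lyapunov's inequality (interpolating between $L_q$ and $L_\infty$, valid since $q\le mp$) provides $\|\phi\|_{L_{mp}(D,\mu)}\le \|\phi\|_{L_q(D,\mu)}^{q/(mp)}\,\|\phi\|_{L_\infty(D,\mu)}^{1-q/(mp)}$. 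Substituting and cancelling the common factor $\|\phi\|_{L_\infty(D,\mu)}^{m-q/p}$ (legitimate since~\eqref{d2.6'} is trivial when $\|\phi\|_{L_\infty(D,\mu)}=0$) leaves
\[
\|\phi\|_{L_\infty(D,\mu)} \le c^{p/q}\,m^{\sigma/q}\,n^{\sigma/q}\,\|\phi\|_{L_q(D,\mu)}.
\]
Since $m$ depends only on $p$ and $q$, this is exactly~\eqref{d2.5} with $p$ replaced by $q$; a second application of Theorem~\ref{dt2.1} (with its ``$p$'' now being our $q$) then delivers~\eqref{d2.6'} in this subcase.

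The main obstacle is precisely the subcase $q>p$: a naive Hölder estimate $\|\phi\|_{L_p(D,\mu)}\le \mu(D)^{1/p-1/q}\|\phi\|_{L_q(D,\mu)}$ converts the right-hand side of~\eqref{d2.5} into the $L_q$ norm but preserves the exponent $\sigma/p$, which exceeds the required $\sigma/q$ and hence is too weak. The $\phi^m$ trick circumvents this by working in the larger polynomial space $\P_{mn}$ and absorbing the excess growth into the fixed constant $m^{\sigma/q}$, producing the correct $n^{\sigma/q}$ factor. The only routine checks are that $\phi^m\in\P_{mn}$ and that $c_2=c_2(r,q)$ at the end (after fixing $p$ in the hypothesis), which follows because $m$ is determined by $p,q$ alone.
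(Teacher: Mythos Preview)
Your proof is correct and follows essentially the same route as the paper: both use the power trick $\phi\mapsto\phi^m$ (with $m$ chosen so that $mp\ge q$) to upgrade~\eqref{d2.5} from exponent $p$ to an exponent at least $q$, then appeal to Theorem~\ref{dt2.1}. The only cosmetic difference is that you interpolate $\|\phi\|_{L_{mp}}$ between $L_q$ and $L_\infty$ explicitly before invoking Theorem~\ref{dt2.1}, whereas the paper stops at the $L_\infty$--$L_{mp}$ bound and lets the subsequent application of~\eqref{d2.6} absorb that interpolation.
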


\begin{proof}
The inequality~\eqref{d2.6} is an immediate corollary of Theorem~\ref{dt2.1} as the boundedness of $D$ guarantees that $\phi\in\P_n$ is in $L_\infty(D)$ or $L_\infty(D,\mu)$ and as $\mu$ is a finite measure on $D$, $\phi\in\P_n$ implies $\phi\in L_s(D,\mu)$ for all $0<s<\infty$.

To prove~\eqref{d2.6'} we choose an integer $s$ such that $sp\ge q$ (the case $q=r=\infty$ is obvious). As $\phi\in\P_n$ implies $\phi^s\in\P_{ns}$, we have
\[
\|\phi^s\|_{L_\infty(D,\mu)}\le c(ns)^{\sigma/p}\|\phi^s\|_{L_p(D,\mu)},
\]
and hence
\[
\|\phi\|_{L_\infty(D,\mu)}\le c^{1/s}(ns)^{\sigma/(ps)}\|\phi\|_{L_{ps}(D,\mu)},
\]
which implies~\eqref{d2.6'}
%\[
%\|\phi\|_{L_r(D,\mu)}\le (ns)^{\sigma(\frac1q-\frac1r)}\|\phi\|_{L_q(D,\mu)}
%\]
for % $q\le ps$ and
$r\ge q$.
\end{proof}

\begin{remark}
The above result remains valid if the sequence of polynomial subspaces $\{\P_n\}$ is replaced with any sequence $\{\Q_n\}$ of subspaces of $L_\infty(D,\mu)$ satisfying the property that $\phi\in\Q_n$ implies $\phi^s\in\Q_{ns}$ for all positive integers $s$. Useful examples of such sequences of subspaces may include radial polynomials, even polynomials (of certain degree), and others.
\end{remark}

We note that in applying Theorem~\ref{dt2.1} and Corollary~\ref{dc2.2}, we usually set $p=2$ or $p=1$. We also note that the requirement that~\eqref{d2.5} is valid for all polynomials of total degree $n$ is not crucial for deducing~\eqref{d2.6} but it is for the proof of~\eqref{d2.6'}.

It is already clear from Theorem~\ref{dt2.1} that if~\eqref{d2.1} is satisfied with smallest possible $M>0$, then
\be\label{KK}
\|f\|_{L_\infty(D,\mu)}\le K\|f\|_{L_r(D,\mu)}
\quad\text{for some }r>p
\ee
implies $K\ge M^{p/r}$. In some cases $K$ has to be substantially bigger as we see from the following example.
\begin{example}\label{totik-ex}
$D=T=(-\pi,\pi]$ (the circle), $w(x)=\frac1{2\pi}$ (the Lebesgue measure normalized by $\mu(T)=1$), $p=2$, $r=4$,
\[
S={\rm span}\{e^{i2^kx}:k=1,\dots,n\}.
\]
Clearly $\|f\|_{L_\infty(T)}\le n^{1/2}\|f\|_{L_2(T)}$ for $f\in S$, so $M=n^{1/2}$ (with $p=2$). We choose $f=\sum_{k=1}^ne^{i2^kx}$, $\|f\|_{L_\infty(T)}=f(0)=n$. However,
%\[
%\|f\|_{L_4(T)}
%=\Bigl(n+4\frac{(n-1)n}{2}\Bigr)^{1/4}
%=(2n^2-n)^{1/4}
%/]
\begin{align*}
\|f\|_{L_4(T)}^4&=\int_{-\pi}^\pi(f\overline{f})^2d\mu
=\int_{-\pi}^\pi\Biggl(n+\sum_{1\le k< j\le n}\left(e^{i(2^k-2^j)x}+e^{i(2^j-2^k)x}\right)\Biggr)^2d\mu\\
&=n^2+\frac{n(n-1)}2\cdot2=2n^2-n,
\end{align*}
and hence $K$ of~\eqref{KK} satisfies $K\ge n/(2n^2-n)^{1/4}\approx n^{1/2}$. For large $n$ the value of $K$ is much bigger than $M^{1/2}=n^{1/4}$.
\end{example}

The first author seems to remember vaguely that the above example (or a similar one) was given to him  by V.~Totik during a coffee break in a conference in Banff about a decade ago. This example answers negatively the problem posed in~\cite[p.~3244]{Bo-Er} (by choosing $[a,b]=[-\pi,\pi]$ and any fixed $\delta\in(0,\pi)$).

%[[A: I believe that $L_4$ norm computation details are important as they show why powers of two are needed.]]

\begin{remark}
In Theorem~\ref{dt2.1} one can replace polynomials of (total) degree $n$ and $D\subset\R^d$ by trigonometric polynomials of (total) degree $n$ and $D\subset T^d$ (the torus) or by spherical harmonic polynomials of degree $n$ and $D\subset S^{d-1}$ (the unit sphere in $\R^d$). In the proof of~\eqref{d2.6} $\P_n$ can be replaced by a subspace of $\P_n$ for which~\eqref{d2.5} is satisfied and for example we may choose to deal with polynomials of degree $n$ orthogonal to polynomials of degree $k$ for some $k<n$.
\end{remark}

\section{Nikol'skii inequality and Christoffel function}\label{sect-chris}

In this section we establish relations between the Nikol'skii inequality and the Christoffel function.

%[[A: We suddenly switch to real case only, while the previous section included complex case as well. Perhaps we should indicate this.]]

From now on, we consider only real-valued functions. For a bounded domain $D$, $D\subset\R^d$, a finite measure $\mu$ on $D$ and a finite dimensional space $S$ of bounded continuous functions on $D$, the Christoffel function $\lambda(S,D,\mu,\x)$ is given by
\be\label{d3.1}
\lambda(S,D,\mu,\x)^{-1}=C(S,D,\mu,\x)=\sum_{k=1}^N\phi_k(\x)^2,
\quad \x\in D,
\ee
where $\{\phi_k\}_{k=1}^N$ is an orthonormal basis of $S$ on $D$ with respect to $\mu$. Apriori, the Christoffel function depends on the choice of the orthonormal basis of $S$, but we will show below that it does not. This fact and some other basic results of this section are known, but as the proofs are short, we include them for completeness. We also give them in a form and under conditions we need.

Note that we use $N$ to denote the dimension of the space $S$. For spaces of polynomials of degree $n$ we normally have $N\approx n^s$ with appropriate $s$.

Throughout this section we use the function $C(S,D,\mu,\x)$ rather than $\lambda(S,D,\mu,\x)$.

Asymptotics of the Christoffel function for different polynomial spaces have been studied in various papers, see, for example~\cite{To} and references therein. However, known results are usually concerned with the asymptotic behaviour of the Christoffel functions at a fixed point as the degree $n$ approaches infinity. For our problem, a different quantity is of importance, namely, the order (with respect to $n$) of the maximum value of the function $C(S,D,\mu,\x)$ over all $\x\in D$.

The definition~\eqref{d3.1} can be extended to the situation when $S$ is a finite dimensional space of continuous functions on all $\R^d$ (but with the basis $\{\phi_k\}_{k=1}^N$ still orthonormal in $D$), in which case $C(S,D,\mu,\x)$ can be defined for all $\x\in\R^d$, not just for $\x\in D$. In the following few statements and proofs, the corresponding situation is treated in the parentheses. In particular, this is the situation when $S$ consists of polynomials (algebraic, trigonometric or spherical harmonics).

\begin{theorem}\label{dt3.1}
Suppose $D\subset\R^d$ is a bounded domain, $\mu$ is a finite positive measure on $D$, $0<\mu(D)<\infty$, and $S$ is a finite dimensional space of bounded continuous functions on $D$ (or on $\R^d$). Then for $f\in S$ and $\x\in D$ (or $\x\in\R^d$) we have
\be\label{d3.2}
C(S,D,\mu,\x)^{-1/2}=\min_{f\in S,\,|f(\x)|=1}\|f\|_{L_2(D,\mu)}.
\ee
In particular, $C(S,D,\mu,\x)$ does not depend on the choice of the orthonormal basis of $S$ (on $D$).
%Moreover, $C(S,D,\mu,\x)^{1/2}$ in~\eqref{d3.2} cannot be improved and it is independent of the choice of the orthonormal basis of $S$. The first part of the statement actually establishes the following (which, in turn, implies independence of the basis)
\end{theorem}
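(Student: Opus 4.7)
The plan is to exploit the fact that $S$ inherits a Hilbert space structure from $L_2(D,\mu)$ and use the Cauchy--Schwarz inequality in the orthonormal coordinates $\{\phi_k\}_{k=1}^N$. This is the standard reproducing kernel identity: $\sum_k \phi_k(\x)\phi_k(\cdot)$ is the reproducing kernel at $\x$ for the evaluation functional on $S$, and its norm (squared) is $C(S,D,\mu,\x)$.

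First I would expand an arbitrary $f\in S$ as $f = \sum_{k=1}^N a_k \phi_k$. Orthonormality in $L_2(D,\mu)$ gives $\|f\|_{L_2(D,\mu)}^2 = \sum_k a_k^2$, while the pointwise value is $f(\x)=\sum_k a_k\phi_k(\x)$. Applying Cauchy--Schwarz in $\R^N$ produces
\[
|f(\x)|^2 \le \Bigl(\sum_k a_k^2\Bigr)\Bigl(\sum_k \phi_k(\x)^2\Bigr)= \|f\|_{L_2(D,\mu)}^2\, C(S,D,\mu,\x),
\]
so whenever $|f(\x)|=1$, we get $\|f\|_{L_2(D,\mu)}^2\ge C(S,D,\mu,\x)^{-1}$, giving the $\ge$ direction in \eqref{d3.2}.

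Next I would verify sharpness by producing the extremizer. Equality in Cauchy--Schwarz requires $a_k = t\,\phi_k(\x)$, and then $f(\x)= t\,C(S,D,\mu,\x)$. Provided $C(S,D,\mu,\x)\neq 0$, choose $t=C(S,D,\mu,\x)^{-1}$, so that $f(\x)=1$ and $\|f\|_{L_2(D,\mu)}^2 = t^2 C(S,D,\mu,\x) = C(S,D,\mu,\x)^{-1}$. This gives the matching $\le$ bound and establishes \eqref{d3.2}. In the degenerate case $C(S,D,\mu,\x)=0$, every $\phi_k$ vanishes at $\x$, hence every $f\in S$ satisfies $f(\x)=0$; no competitor with $|f(\x)|=1$ exists and the minimum is $+\infty$, consistent with $C^{-1/2}$.

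Finally, the basis independence is immediate: the right-hand side of \eqref{d3.2} is defined purely in terms of $S$, $\x$, and $\|\cdot\|_{L_2(D,\mu)}$, with no reference to the basis, so $C(S,D,\mu,\x)$ is an invariant of $S$ on $D$. The same argument works verbatim when $S$ consists of functions on all of $\R^d$ (with the inner product still computed over $D$) since only the pointwise value at $\x$ and the $L_2(D,\mu)$-norm enter the computation. No real obstacle arises; the only point requiring a brief remark is the degenerate case where the evaluation functional vanishes on $S$.
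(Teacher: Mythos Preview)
Your proof is correct and follows essentially the same approach as the paper: expand $f$ in the orthonormal basis, apply Cauchy--Schwarz in $\R^N$ to get the inequality, and take $a_k$ proportional to $\phi_k(\x)$ for the extremizer. Your treatment is slightly more explicit (normalizing the extremizer and handling the degenerate case $C(S,D,\mu,\x)=0$), but the argument is the same.
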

\begin{proof}
Let $\{\phi_k\}_{k=1}^N$ be an orthonormal basis of $S$ on $D$ (or of restrictions of functions from $S$ to $D$). We set $f=\sum_{k=1}^Na_k\phi_k$, $a_k\in\R$, and by the Parseval identity,
\[
\|f\|_{L_2(D,\mu)}=\Bigl(\sum_{k=1}^N a_k^2\Bigr)^{1/2}.
\]
Using the Cauchy-Schwartz inequality,
\[
|f(\x)|=\left|\sum_{k=1}^Na_k\phi_k(\x)\right|
\le\Bigl(\sum_{k=1}^N a_k^2\Bigr)^{1/2}
\Bigl(\sum_{k=1}^N \phi_k(\x)^2\Bigr)^{1/2},
\]
and hence
\[
|f(\x)|\le C(S,D,\mu,\x)^{1/2} \|f\|_{L_2(D,\mu)}.
\]
To show that the minimum in~\eqref{d3.2} is attained, select $a_k=\phi_k(\x)$, $k=1,\dots,N$. As~\eqref{d3.2} is valid for any orthonormal basis, $C(S,D,\mu,\x)$ does not depend on the choice of that basis.
\end{proof}

As a corollary of Theorem~\ref{dt3.1} and Theorem~\ref{dt2.1} we have:
\begin{corollary}\label{dc3.2}
Under the conditions of Theorem~\ref{dt3.1} and setting
\[
\sup_{\x\in D} C(S,D,\mu,\x)=: C(S,D,\mu)
\]
for any $f\in S$, $q\le 2$ and $r>q$, we have
\be\label{3.3}
\|f\|_{L_r(D,\mu)}
\le
C(S,D,\mu)^{(\frac1q-\frac1r)}
\|f\|_{L_q(D,\mu)}.
\ee
\end{corollary}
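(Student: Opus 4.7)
The plan is to feed the pointwise bound from Theorem~\ref{dt3.1} into Theorem~\ref{dt2.1} with $p=2$, and then read off the exponent.

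First, I would observe that Theorem~\ref{dt3.1} gives, for every $f\in S$ and $\x\in D$, the pointwise estimate
\[
|f(\x)|\le C(S,D,\mu,\x)^{1/2}\,\|f\|_{L_2(D,\mu)}.
\]
Taking the essential supremum over $\x\in D$ and invoking the definition $C(S,D,\mu)=\sup_{\x\in D}C(S,D,\mu,\x)$, this yields
\[
\|f\|_{L_\infty(D,\mu)}\le C(S,D,\mu)^{1/2}\,\|f\|_{L_2(D,\mu)}, \quad f\in S.
\]
So hypothesis~\eqref{d2.1} of Theorem~\ref{dt2.1} holds with $p=2$ and $M=C(S,D,\mu)^{1/2}$.

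Next, I would apply inequality~\eqref{d2.3} of Theorem~\ref{dt2.1} with these choices. For $0<q\le 2$ and $r\ge q$ it yields
\[
\|f\|_{L_r(D,\mu)}\le M^{(\frac1q-\frac1r)p}\|f\|_{L_q(D,\mu)}
=\bigl(C(S,D,\mu)^{1/2}\bigr)^{2(\frac1q-\frac1r)}\|f\|_{L_q(D,\mu)}
=C(S,D,\mu)^{\frac1q-\frac1r}\|f\|_{L_q(D,\mu)},
\]
which is precisely~\eqref{3.3}. The case $r=\infty$ is the bound derived in the first step (or follows from Theorem~\ref{dt2.1}~\eqref{d2.2}).

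There is no real obstacle: the only thing to check is that $C(S,D,\mu)$ is finite so that the step $\sup_{\x\in D}C(S,D,\mu,\x)^{1/2}=C(S,D,\mu)^{1/2}$ is legitimate, but this is guaranteed because $S$ is finite dimensional and consists of bounded continuous functions on $D$ (so each $\phi_k$ is bounded and the finite sum $\sum_{k=1}^N\phi_k^2$ is bounded on $D$).
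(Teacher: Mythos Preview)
Your argument is correct and is exactly the approach the paper intends: it states Corollary~\ref{dc3.2} as an immediate consequence of Theorem~\ref{dt3.1} and Theorem~\ref{dt2.1}, and your write-up simply spells out that deduction with $p=2$ and $M=C(S,D,\mu)^{1/2}$.
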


For $S=\P_n$ a stronger corollary follows, as will be mentioned in Section~\ref{sect-examples}.

The following elementary result (observation) will be useful.
\begin{theorem}\label{dt3.3}
Suppose $D_1\subset D_2\subset\R^d$ are two bounded domains, $S$ is a finite dimensional space of bounded continuous functions on $D_2$ (or on $\R^d$) and $\mu$ a finite measure on $D_2$. Then for $\x\in D_1$ (or for $\x\in\R^d$)
\be\label{d3.4}
C(S,D_1,\mu,\x)\ge C(S,D_2,\mu,\x).
\ee
\end{theorem}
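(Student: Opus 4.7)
The plan is to reduce everything to the variational characterization in Theorem~\ref{dt3.1} rather than attempt to compare the two series definitions directly. Applying~\eqref{d3.2} to each of $D_1$ and $D_2$ gives
\[
C(S,D_i,\mu,\x)^{-1/2}=\min_{f\in S,\,|f(\x)|=1}\|f\|_{L_2(D_i,\mu)}, \qquad i=1,2.
\]
The feasible set $\{f\in S:|f(\x)|=1\}$ on the right-hand side is the same in both cases (this is the payoff of switching from the basis-dependent formula~\eqref{d3.1} to the variational one), so the comparison reduces to a pointwise comparison of the objective functions.

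Next I would observe the one-line monotonicity
\[
\|f\|_{L_2(D_1,\mu)}^2=\int_{D_1}|f|^2\,d\mu\le\int_{D_2}|f|^2\,d\mu=\|f\|_{L_2(D_2,\mu)}^2,
\]
which holds for every $f\in S$ because $\mu$ is a positive measure and $D_1\subset D_2$. Taking the infimum over the common feasible set then yields
\[
C(S,D_1,\mu,\x)^{-1/2}\le C(S,D_2,\mu,\x)^{-1/2},
\]
and inverting and squaring gives~\eqref{d3.4}.

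The only delicate point — and really the only place where care is needed — is to ensure that $C(S,D_1,\mu,\x)$ is well-defined, i.e., that the $L_2(D_1,\mu)$ inner product is nondegenerate on $S$ so that an orthonormal basis on $D_1$ exists. If some nonzero $f\in S$ vanishes $\mu$-a.e.\ on $D_1$, the minimum in the variational formula on $D_1$ is zero (taking such an $f$, scaled so that $|f(\x)|=1$), in which case $C(S,D_1,\mu,\x)$ should be interpreted as $+\infty$ and the inequality holds trivially. In the nondegenerate case the argument above is rigorous and immediate, so there is no real obstacle: the content of the theorem is entirely in the variational reformulation of Theorem~\ref{dt3.1}, after which monotonicity of $\int|f|^2\,d\mu$ under enlarging the domain of integration finishes the proof.
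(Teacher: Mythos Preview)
Your proof is correct and follows exactly the same approach as the paper: both use the variational characterization~\eqref{d3.2} from Theorem~\ref{dt3.1} and the monotonicity of $\|f\|_{L_2(D_i,\mu)}$ in the domain of integration. Your additional remark on the degenerate case is more careful than the paper's own treatment, but the core argument is identical.
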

\begin{proof}
For $\x\in D_1$ (or for $\x\in\R^d$) and $f\in S$ we have
\[
C(S,D_2,\mu,\x)^{-1/2}=\min_{f\in S,\,|f(\x)|=1}\|f\|_{L_2(D_2,\mu)}
\ge \min_{f\in S,\,|f(\x)|=1}\|f\|_{L_2(D_1,\mu)}=
C(S,D_1,\mu,\x)^{-1/2}.
\]
\end{proof}

When $\mu$ is the Lebesgue measure ($d\mu=w(\x)d\x$ and $w(\x)=1$), we set the Christoffel function as $\lambda(S,D,\x)=C(S,D,\x)^{-1}$. This situation will be our primary focus for the remainder of the paper.

Let $T\x=\x_0+A\x$ be an affine transformation on $\R^d$. In what follows, it will be understood that $\det T=\det A$. In addition, whenever we refer to an affine transformation $T$ on $\R^d$, we assume that it is non-degenerate, i.e., $\det T\ne0$. The space $S$ of functions on $D$ will be naturally mapped to the space $S^T$ of functions on $TD= \x_0+AD$, the image of $D$ under $T$, by $g(T\x)=f(\x)$, where $g\in S^T$ and $f\in S$. We now track how the Christoffel function will change under the affine transformation of the domain.
\begin{theorem}\label{l3.5}
For any affine transformation $T$ on $\R^d$, domain $D\subset\R^d$, space $S$ of functions on $D$, $\x\in D$ (or $\x\in\R^d$), we have
\be\label{d3.5}
C(S^T,TD,T\x)=C(S,D,\x)|\det T|^{-1}.
\ee
\end{theorem}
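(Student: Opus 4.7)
The plan is to invoke the variational characterization of the Christoffel function from Theorem~\ref{dt3.1} (with $\mu$ taken to be the Lebesgue measure) and combine it with the standard Jacobian formula for an affine change of variables. Concretely, I will show that
\[
C(S^T, TD, T\x)^{-1/2} = |\det T|^{1/2}\, C(S, D, \x)^{-1/2},
\]
and the desired identity~\eqref{d3.5} follows by squaring and inverting.

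First I would record the bijection between $S$ and $S^T$: since $T$ is non-degenerate, the assignment $f \mapsto g$ with $g = f \circ T^{-1}$ (equivalently $g(T\x) = f(\x)$) is a linear isomorphism between $S$ (on $D$) and $S^T$ (on $TD$), and the same formula works if $S$ is viewed as functions on all of $\R^d$. Under this pairing, the normalization $|g(T\x)| = 1$ is exactly the condition $|f(\x)| = 1$, so the feasible sets in the two minimizations from Theorem~\ref{dt3.1} are in one-to-one correspondence.

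Next, I would apply Theorem~\ref{dt3.1} to the left-hand side of~\eqref{d3.5} and transform the $L_2$-norm by the substitution $\y = T\x = \x_0 + A\x$, whose Jacobian is $|\det A| = |\det T|$:
\[
\|g\|_{L_2(TD)}^2 = \int_{TD} |g(\y)|^2\, d\y = \int_D |f(\x)|^2\, |\det T|\, d\x = |\det T|\, \|f\|_{L_2(D)}^2.
\]
Taking the infimum over the corresponding sets and using Theorem~\ref{dt3.1} again on the right-hand side gives the displayed formula above, from which~\eqref{d3.5} follows at once.

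There is no real obstacle here: the argument is essentially a Jacobian computation packaged around the extremal characterization of $C(S,D,\mu,\x)$. The only mild subtlety is to verify that the normalization constraint transports cleanly between the two minimizations, which is automatic once the bijection $g = f \circ T^{-1}$ is fixed and the relation $g(T\x) = f(\x)$ is used at the point $\x$ where the Christoffel function is evaluated.
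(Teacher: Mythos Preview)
Your proposal is correct and follows essentially the same route as the paper: both reduce the identity to the Jacobian change-of-variables formula $\|g\|_{L_2(TD)}^2=|\det T|\,\|f\|_{L_2(D)}^2$ under the correspondence $g(T\x)=f(\x)$. The only cosmetic difference is that you invoke the variational characterization from Theorem~\ref{dt3.1} explicitly, whereas the paper compresses everything into the single norm identity and says it ``directly implies'' the result; the content is the same.
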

\begin{proof}
Clearly,
\[
\|f(T\cdot)\|_{L_2(TD)}=\|f(\cdot)\|_{L_2(D)}\cdot|\det T|^{1/2},
\]
which directly implies~\eqref{d3.5}.
\end{proof}

While very simple, the above theorem will be used frequently. Note that the space of algebraic polynomials is affine-invariant: $\P_n^T=\P_n$.

%[[Maybe a comment is in order that would say that the affine transformation would allow to handle narrow domains.]]

%\begin{remark}\label{extension}
%The definition~\eqref{d3.1} can be extended to the situation when $S$ is a finite dimensional space of continuous functions on all $\R^d$ (but still orthonormal in $D$), in which case $C(S,D,\mu,\x)$ can be defined for all $\x\in\R^d$, not just for $\x\in D$. Moreover, it is straightforward to verify that in this case the statements of Theorem~\ref{dt3.1}, Theorem~\ref{dt3.3}, and Lemma~\ref{d3.5} are valid for any $\x\in\R^d$. In particular, the above works for our most important situation $S=\P_n$, which will be used in the techniques of Section~\ref{sect-upper}.
%\end{remark}

For convex bodies (convex compact sets with non-empty interior) $D\subset\R^d$, we can show that in order to compute the maximum value (up to a constant factor) of the Christoffel function over $D$, it is sufficient to compute the maximum over the boundary $\partial D$ of $D$. First we establish a geometric lemma and then apply it in our context.
\begin{lemma}\label{boundary_conv_lemma}
Suppose $D$ is a convex body in $\R^d$. Then, for any $\x\in D$ there exists an affine transformation $T$ with $|\det T|\ge \frac1{2^d}$ such that
\[
\x\in T(\partial D)\quad\text{and}\quad T(D)\subset D.
\]
\end{lemma}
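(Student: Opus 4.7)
The plan is to take $T$ to be a $\tfrac12$-homothety, i.e., a dilation of ratio $\tfrac12$ about some well-chosen center $\x_0\in D$. Any such $T$ has linear part $\tfrac12 I$, so $|\det T|=2^{-d}$ matches the required bound exactly. Moreover, when $\x_0\in D$ the map $T(\z)=\tfrac12\z+\tfrac12\x_0$ is a midpoint of two points of $D$, so convexity immediately yields $T(D)\subset D$. The lemma therefore reduces to choosing $\x_0\in D$ and $\y_0\in\partial D$ with $\x=T(\y_0)=\tfrac12(\y_0+\x_0)$, i.e., $\x_0=2\x-\y_0\in D$.

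To locate such a $\y_0$, I would draw any line $\ell$ through $\x$. Since $D$ is a convex body, $\ell\cap D$ is a chord with endpoints $\boldsymbol{p},\boldsymbol{q}\in\partial D$, and $\x=t\boldsymbol{p}+(1-t)\boldsymbol{q}$ for some $t\in[0,1]$. After swapping $\boldsymbol{p}$ and $\boldsymbol{q}$ if needed, we may assume $t\ge\tfrac12$ and set $\y_0:=\boldsymbol{p}$. Then
\[
\x_0:=2\x-\y_0=(2t-1)\boldsymbol{p}+2(1-t)\boldsymbol{q}
\]
is a convex combination of $\boldsymbol{p}$ and $\boldsymbol{q}$, so it lies on the chord and hence in $D$. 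With this choice, $T(\z)=\tfrac12(\z+\x_0)$ satisfies $T(\y_0)=\x$ (so $\x\in T(\partial D)$), $T(D)\subset D$ by convexity, and $|\det T|=2^{-d}$, as required.

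I do not anticipate a substantive obstacle; the whole content is the elementary observation that in a convex body, one of the two endpoints of any chord through an interior point $\x$ has its reflection across $\x$ again in $D$. The only minor edge case is $\x\in\partial D$, which is either subsumed by the construction (choose $\y_0=\x$, giving $\x_0=\x$ and $T$ the $\tfrac12$-homothety centered at $\x$) or handled trivially by taking $T$ to be the identity, for which $|\det T|=1\ge 2^{-d}$. The constant $2^{-d}$ is tight within this approach: it is precisely the Jacobian of the $\tfrac12$-dilation, which is the largest contraction ratio that, together with an arbitrary translation by a point of $D$, still guarantees $T(D)\subset D$ purely from convexity.
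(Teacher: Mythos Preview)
Your proof is correct and follows essentially the same approach as the paper: pick a chord through $\x$ with endpoints on $\partial D$, and use a homothety along that chord mapping one endpoint to $\x$. The only cosmetic difference is that the paper centers its homothety at the \emph{farther} boundary endpoint $\y$ and uses ratio $\lambda=\dfrac{|\x-\y|}{|\z-\y|}\in[\tfrac12,1]$ (so $|\det T|=\lambda^d\ge 2^{-d}$), whereas you center at the interior point $\x_0=2\x-\y_0$ and always use ratio exactly $\tfrac12$; both yield $|\det T|\ge 2^{-d}$ and the same containment $T(D)\subset D$ by convexity.
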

\begin{proof}
For any $\x\in D$, consider any segment $[\y,\z]$ containing $\x$ with $\y,\z\in\partial D$, $\y\ne\z$. Without loss of generality, we can assume that $\lambda:=\frac{|\x-\y|}{|\z-\y|}\ge \frac12$. Take $T$ to be the homothety with coefficient $\lambda$ and center at $\y$, i.e., $T(\cdot)=\y+\lambda(\cdot-\y)=(1-\lambda)\y+\lambda(\cdot)$. Clearly, $|\det T|=\lambda^d\ge\frac1{2^d}$. Also, $\x=T(\z)\in T(\partial D)$. Finally, as $0<\lambda<1$ and $\y\in D$, by convexity of $D$, the image $T(D)$ is a subset of $D$.
\end{proof}
\begin{theorem}\label{boundary_const}
For any convex body $D\subset\R^d$
\[
\max_{\x\in D} C(\P_n, D, \x)\le 2^d \max_{\x\in\partial D} C(\P_n,D,\x).
\]
\end{theorem}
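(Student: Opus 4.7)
The plan is to combine the geometric Lemma~\ref{boundary_conv_lemma} with the affine transformation rule of Theorem~\ref{l3.5} and the domain-monotonicity of the Christoffel function from Theorem~\ref{dt3.3}. Fix an arbitrary $\x\in D$; the goal is to bound $C(\P_n,D,\x)$ in terms of some $C(\P_n,D,\z)$ with $\z\in\partial D$.

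First I would invoke Lemma~\ref{boundary_conv_lemma} to obtain an affine transformation $T$ with $|\det T|\ge 2^{-d}$, $T(D)\subset D$, and $\x\in T(\partial D)$; so write $\x=T\z$ for some $\z\in\partial D$. Using that $\P_n$ is affine-invariant (so $\P_n^T=\P_n$), Theorem~\ref{l3.5} applied to the domain $D$ and point $\z$ gives
\[
C(\P_n,TD,\x)=C(\P_n,TD,T\z)=C(\P_n,D,\z)\,|\det T|^{-1}\le 2^d\,C(\P_n,D,\z).
\]

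Next, since $TD\subset D$, Theorem~\ref{dt3.3} applied with $D_1=TD$ and $D_2=D$ yields
\[
C(\P_n,D,\x)\le C(\P_n,TD,\x).
\]
Chaining the two inequalities,
\[
C(\P_n,D,\x)\le 2^d\,C(\P_n,D,\z)\le 2^d \max_{\x'\in\partial D} C(\P_n,D,\x'),
\]
and taking the supremum over $\x\in D$ delivers the claim.

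The only nontrivial ingredient is Lemma~\ref{boundary_conv_lemma}, which is already available; the rest is a clean two-step chain (affine-invariance followed by domain monotonicity). The one small point to verify carefully is that $\P_n^T=\P_n$ for affine $T$, so that the quantity $C(\P_n,TD,\cdot)$ on the left of the affine-transformation identity really is the Christoffel function of the polynomial space on the new domain, not of some transformed space; this is immediate since composition of a polynomial with an affine map is a polynomial of the same total degree.
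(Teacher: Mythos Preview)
Your proof is correct and follows essentially the same approach as the paper: invoke Lemma~\ref{boundary_conv_lemma}, then chain domain monotonicity (Theorem~\ref{dt3.3}) with the affine transformation rule (Theorem~\ref{l3.5}) using $\P_n^T=\P_n$. The only cosmetic difference is that the paper first picks a point where the maximum is attained, whereas you bound $C(\P_n,D,\x)$ for an arbitrary $\x$ and take the supremum at the end; the logic is identical.
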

\begin{proof}
Assume that the maximum on the left hand side is attained at a point $\tx\in D$. Apply Lemma~\ref{boundary_conv_lemma} for $\tx$, let $T$ be the resulting affine transformation. By Theorem~\ref{dt3.3},
\[%begin{equation}\label{maxeq}
\max_{\x\in D} C(\P_n, D, \x)=C(\P_n, D, \tx)\le C(\P_n,T(D),\tx) \le \max_{\x\in T(\partial D)} C(\P_n,T(D),\x).
\]%end{equation}
Theorem~\ref{l3.5} implies
\[
\max_{\x\in T(\partial D)} C(\P_n,T(D),\x)=|\det T|^{-1} \max_{\x\in\partial D} C(\P_n,D,\x),
\]
and %as $T(\partial D)=\partial T(D)$,
the proof is complete.
\end{proof}

Further strengthening of Theorem~\ref{boundary_const} is possible allowing us to prove a similar inequality with $\partial D$ replaced by a smaller set (even one-point set for some domains), see Section~\ref{sect-sharp}.

\begin{remark}
For some $D$ we know that
\[
\max_{\x\in D} C(\P_n,D,\x)=\max_{\x\in\partial D} C(P_n,D,\x).
\]
For the unit Euclidean ball (or ellipsoid) the maximum is achieved at all points of the boundary (see Theorem~\ref{ball-nik}). For the cube (or its affine transformation) the maximum is achieved at all vertices (see Theorem~\ref{cube_general}). Moreover, the proof of Theorem~\ref{dt3.4} implies $\max_{\x\in\partial D}C(\P_n,D,\x)\ge c_1(R) n^{d+1}$ for any domain $D$, $D\subset \x_0+RB_2^d$ (see~\eqref{ball-def}), and~\eqref{dC} of Theorem~\ref{ball-nik} implies for any point $\x_1$ such that ${\rm dist}\,(\x_1,\partial D)\ge r$ implies $C(\P_n,D,\x_1)\le c_2(r) n^d$. Therefore, for $n$ large enough $\max C(\P_n,D,\x)$ has to be attained at a point close to the boundary. This gives credence to our conjecture below that $\max_{\x\in D} C(\P_n,D,\x)=\max_{\x\in\partial D} C(P_n,D,\x)$ for any convex domain $D\subset \R^d$.
\end{remark}

An extreme point of a convex set $D\subset\R^d$ is a point in $D$ which does not lie in any open line segment joining two points of $D$. Alternatively, $\y\in D$ is extreme if $D\setminus\{\y\}$ is convex.

\begin{conjecture}
For a convex compact set $D\subset\R^d$, $\max_{\x\in D} C(\P_N,D,\x)$ is achieved at an extreme point of $D$.
\end{conjecture}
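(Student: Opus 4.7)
The plan is to prove the conjecture by induction on the dimension $d$, successively reducing the location of the maximum from $D$ to a face of one lower dimension until reaching the zero-dimensional faces, which are precisely the extreme points. The base case $d=1$ is classical: the Christoffel function on a segment $[a,b]$ attains its maximum at the endpoints, which are the only two extreme points of $[a,b]$.

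The first step in the induction would be to sharpen Theorem~\ref{boundary_const} from the constant-factor inequality $\max_D C\le 2^d \max_{\partial D}C$ to the equality $\max_{\x\in D} C(\P_n,D,\x)=\max_{\x\in\partial D}C(\P_n,D,\x)$. A natural attempt is to refine Lemma~\ref{boundary_conv_lemma}: given an interior maximizer $\tx$, there is freedom in the choice of the segment $[\y,\z]\ni\tx$, and as $\tx$ moves towards $\partial D$ along an appropriate direction, the homothety coefficient $\lambda=|\tx-\y|/|\z-\y|$ can be made arbitrarily close to $1$, so by Theorems~\ref{dt3.3} and~\ref{l3.5} the value $C(\P_n,D,\z)$ at the corresponding boundary image differs from $C(\P_n,D,\tx)$ by the factor $\lambda^{-d}$ arbitrarily close to $1$. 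A careful iteration of this move, producing a sequence $\tx=\tx_0,\tx_1,\ldots$ with $C(\P_n,D,\tx_j)$ non-decreasing and distance from $\tx_j$ to $\partial D$ tending to zero, combined with continuity of $C(\P_n,D,\cdot)$ and compactness of $\partial D$, should yield an honest boundary maximizer.

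The inductive step then proceeds as follows. Given the maximum attained at $\x_0\in\partial D$, take a supporting hyperplane $H$ at $\x_0$ and consider the exposed face $F:=D\cap H$, a convex compact set of dimension at most $d-1$. If $F=\{\x_0\}$, then $\x_0$ is exposed, hence an extreme point of $D$, and we are done. Otherwise, apply the inductive hypothesis to $F$ viewed as a convex body inside $H\cong\R^{d-1}$ to obtain that the appropriate Christoffel function on $F$ is maximized at an extreme point $\x_1$ of $F$; and extreme points of $F$ are also extreme points of $D$ by a standard convex-geometric fact, so $\x_1$ is the desired extreme point of $D$.

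The main obstacle --- and the reason the statement is left as a conjecture --- lies in justifying the transition from $D$ to $F$ in the inductive step. The quantity $C(\P_n,D,\x_0)$ is defined using the $d$-dimensional Lebesgue measure on $D$, whereas $F$ has $d$-dimensional measure zero, so Theorem~\ref{dt3.3} is not directly applicable to relate $C(\P_n,D,\x_0)$ to a $(d-1)$-dimensional Christoffel function on $F$. A possible remedy is to approximate $D$ near $\x_0$ by a thin slab $F\times I_\delta$, where $I_\delta$ is a short segment transverse to $H$, derive a tensor-product-type bound on $C(\P_n,F\times I_\delta,\cdot)$ in terms of the $(d-1)$-dimensional Christoffel function on $F$, and pass to the limit as $\delta\to 0$. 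Making this comparison tight (without losing constants and without shrinking the space of polynomials) appears to require new ideas going beyond the affine-invariance framework of Sections~\ref{sect-gen} and~\ref{sect-chris}, which is likely why the authors state the result only as a conjecture.
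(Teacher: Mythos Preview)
The statement you are attempting to prove is a \emph{conjecture} in the paper; the authors do not supply a proof, and in fact the preceding Remark explicitly presents even the weaker assertion $\max_{\x\in D}C(\P_n,D,\x)=\max_{\x\in\partial D}C(\P_n,D,\x)$ as open. There is therefore no proof in the paper to compare your attempt against, and your write-up is not a proof either: it is an outline that you yourself correctly label as incomplete at the inductive step.

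Beyond the gap you already identify in passing from $D$ to a face $F$, your first step is also not justified. With $\tx$ an interior maximizer, the homothety argument of Lemma~\ref{boundary_conv_lemma} and Theorems~\ref{dt3.3},~\ref{l3.5} yields only
\[
C(\P_n,D,\tx)\le \lambda^{-d}\,C(\P_n,D,\z),\qquad \z\in\partial D,\ \lambda\in[\tfrac12,1),
\]
equivalently $C(\P_n,D,\z)\ge \lambda^{d}\,C(\P_n,D,\tx)$, which is a \emph{lower} bound on the boundary value, strictly weaker than the value at $\tx$. Iterating this does not produce a non-decreasing sequence of Christoffel values; each step loses a factor $\lambda_j^{d}<1$, and since $\tx$ is at positive distance from $\partial D$ while $D$ is bounded, the coefficients $\lambda_j$ are bounded away from $1$ for segments through a fixed interior point. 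Your phrase ``as $\tx$ moves towards $\partial D$'' conflates the fixed maximizer with a variable point; no choice of segment through the fixed $\tx$ makes $\lambda$ arbitrarily close to $1$. Thus the equality $\max_D C=\max_{\partial D}C$ remains unproved, consistent with the authors' own remark that this is only conjectural.

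Your diagnosis of the second obstacle is accurate: restricting to a face $F\subset H$ changes the underlying measure from $d$-dimensional to $(d-1)$-dimensional Lebesgue, so neither Theorem~\ref{dt3.3} nor the tensor-product heuristic applies without loss. In short, both steps of your induction are genuinely open, which is precisely why the paper states the result as a conjecture rather than a theorem.
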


\section{Christoffel function on the ball and the cube, and applications}\label{sect-christ-ball-cube}

Let
\begin{equation}\label{ball-def}
B_2^d=\{\x=(x_1,\dots,x_d)\in\R^d:\|\x\|_2=(x_1^2+\dots+x_d^2)^{1/2}\le1\}
\end{equation}
be the unit Euclidean ball in $\R^d$. (Later we will make use of the notation $B_\alpha^d$ for the more general $l_\alpha$ unit balls.) The ball of radius $r$ centered at $\x_0$ is $\x_0+rB_2^d$.
%[[A: this notation may look somewhat awkward, but it would serve two purposes: dimension is indicated in the notation, and also it will be consistent with the notation for the $L_p$ balls in a later section.]]
We also define
\[
C(S,D):= \sup_{\x\in D} C(S,D,\x).
\]
\begin{theorem}\label{ball-nik}
We have
\begin{align}
&C(\P_n,B_2^d)\approx n^{d+1}\text{ and }C(\P_n,B_2^d,\x)=C(\P_n,B_2^d)\text{ for any }\x:\ \|\x\|_2=1;\label{dA}\\
&C(\P_n,B_2^d,\x)\text{ is an increasing function of }\|\x\|_2\text{ when }\frac12\le\|\x\|_2^2\le 1;\label{dB}\\
&C(\P_n,B_2^d,\x)\approx n^d\text{ for }\|\x\|_2<a<1\text{ with constants of equivalence depending on }a.\label{dC}
\end{align}
\end{theorem}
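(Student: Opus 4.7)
The plan is to treat (C), (A), (B) in order. Parts (C) and (A) both reduce to univariate Jacobi--Christoffel estimates via Theorem~\ref{dt3.1}, with (C) corresponding to an interior-weight problem at $s=0$ on $[0,1]$ and (A) to a boundary-weight problem at $t=1$ on $[-1,1]$. Part (B) is a sharp monotonicity statement that I expect to require a more direct argument than the affine-comparison tools of Section~\ref{sect-chris} alone.

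For the central estimate $C(\P_n,B_2^d,\zero)\approx n^d$ (which underpins (C)), the lower bound is obtained by restricting the variational formula to radial polynomials $P(\x)=q(\|\x\|_2^2)$ with $q$ univariate of degree $\le\lfloor n/2\rfloor$: via $s=\|\x\|_2^2$, the ratio $|P(\zero)|^2/\|P\|_{L_2(B_2^d)}^2$ becomes $|q(0)|^2/(c\int_0^1 q(s)^2 s^{(d-2)/2}\,ds)$, whose supremum is the univariate Christoffel value at the endpoint $s=0$ of the Jacobi weight $s^{(d-2)/2}$, known to be $\approx n^d$. For the matching upper bound, given any $P\in\P_n$, the spherical average $Q(s):=\int_{S^{d-1}}P(\sqrt{s}\,\theta)^2\,d\sigma(\theta)$ is a nonnegative polynomial of degree $\le n$ on $[0,1]$ with $Q(0)=P(\zero)^2$ and $\|P\|_{L_2(B_2^d)}^2$ proportional to $\int_0^1 Q(s) s^{(d-2)/2}\,ds$; a Fej\'er--Markov sum-of-squares decomposition $Q=u^2+s(1-s)v^2$ with $\deg u\le n/2$ applies the univariate endpoint bound to $u$ and finishes. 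Part (C) then follows immediately: for $\|\x_0\|_2\le a<1$, Theorems~\ref{dt3.3} and~\ref{l3.5} combined with the inclusions $B(\x_0,1-a)\subset B_2^d\subset B(\x_0,2)$ sandwich $C(\P_n,B_2^d,\x_0)$ between constants depending only on $a$ times $C(\P_n,B_2^d,\zero)$.

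For the lower bound in (A) at $e_1$, I test on ridge polynomials $P(\x)=p(x_1)$ with $p\in\P_n$ univariate: $P(e_1)=p(1)$ and $\|P\|_{L_2(B_2^d)}^2=c_{d-1}\int_{-1}^1 p(t)^2(1-t^2)^{(d-1)/2}\,dt$, reducing to the univariate Jacobi--Christoffel at $t=1$ with symmetric weight $\alpha=\beta=(d-1)/2$, which is $\approx n^{d+1}$. For the upper bound, Theorem~\ref{boundary_const} and rotational symmetry reduce the task to $C(\P_n,B_2^d,e_1)\le cn^{d+1}$. My plan is to symmetrize $P$ by averaging over rotations fixing $e_1$ (preserving $P(e_1)$ and not increasing $\|P\|_{L_2(B_2^d)}$ by Jensen), reducing to polynomials of the form $R(x_1,\|\x'\|_2^2)$, and then to combine the univariate Jacobi endpoint inequality in $x_1$ with the $(d-1)$-dimensional center estimate from the previous step applied to the transverse slice $x_1=t$ near $t=1$. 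The key point is that naively iterating these two inequalities loses a factor of $n^{d-1}$ (giving only $n^{2d}$), so a joint treatment of the normal scale $1-x_1\sim n^{-2}$ and the tangential scale $\|\x'\|_2\sim n^{-1}$ is needed, e.g.\ through an anisotropic inscribed cap matching the ball's parabolic boundary geometry.

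For (B), spherical symmetry gives $C(\P_n,B_2^d,\x)=f(\|\x\|_2)$. The natural affine-comparison approach, namely finding $T$ with $T(r_1 e_1)=r_2 e_1$ and $T(B_2^d)\supseteq B_2^d$, yields only $f(r_1)\le|\det T|\,f(r_2)$; volume preservation $|\det T|=1$ forces $T(B_2^d)=B_2^d$ and hence $T\in O(d)$, contradicting $\|\x_1\|\ne\|\x_2\|$, so $|\det T|>1$ strictly and exact monotonicity is not obtained this way. My plan instead is to use the explicit orthonormal basis on $B_2^d$---products of Jacobi polynomials $J_m^{(\alpha_k)}(1-2\|\x\|_2^2)$ and zonal spherical harmonics---to write $f(r)=\sum_{m,k} c_{m,k}\,r^{2k}\,J_m^{(\alpha_k)}(1-2r^2)^2$, and to verify monotonicity of this sum on $r\in[1/\sqrt 2,1]$, where the Jacobi argument $1-2r^2\in[-1,0]$ lies in a regime favorable for term-by-term monotonicity of the squares. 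The main obstacle throughout is the upper bound in (A): the sharp $n^{d+1}$ scaling at the boundary genuinely requires an extremal argument that cannot be obtained by iterating the tools of Sections~\ref{sect-gen} and~\ref{sect-chris} alone.
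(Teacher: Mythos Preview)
Your treatment of (C) is correct and in fact more self-contained than the paper's. The paper proves $C(\P_n,B_2^d,\zero)\le cn^d$ by evaluating the explicit orthonormal basis of~\cite[(2.1)]{Di14} at the origin (only the purely radial terms survive, reducing to a sum of squared normalized Jacobi values at $-1$), whereas your route through the spherical average $Q(s)$ of $P^2$ and a Fej\'er--Markov sum-of-squares decomposition is basis-free and reduces cleanly to the univariate Jacobi endpoint Christoffel bound. The extension from $\zero$ to all $\|\x\|_2\le a$ via the inclusion $\x+(1-a)B_2^d\subset B_2^d$ together with Theorems~\ref{dt3.3} and~\ref{l3.5} is identical to the paper's. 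Your lower bound for (A) via ridge polynomials $P(\x)=p(x_1)$ is also correct and standard.

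The genuine gap is the upper bound in (A), exactly where you flag it. The symmetrization over rotations fixing $e_1$ is valid and does reduce the problem to polynomials $R(x_1,\|\x'\|_2^2)$, but, as you say, iterating the transverse endpoint bound with the Jacobi endpoint bound in $x_1$ yields only $n^{2d}$. Your proposed repair via an ``anisotropic inscribed cap matching the ball's parabolic boundary geometry'' cannot close the gap within this paper: the entire machinery of Section~\ref{sect-upper} (Lemma~\ref{refball}, Theorem~\ref{main-ell}) takes $C(\P_n,B_2^d)\le cn^{d+1}$ as \emph{input}, so invoking it here is circular; and inscribing a parallelepiped at the matching scales $n^{-2}\times(n^{-1})^{d-1}$ gives, via Theorems~\ref{l3.5} and~\ref{cube_general}, only $|\det T|^{-1}n^{2d}\approx n^{3d+1}$. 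The paper obtains (A) (and (B), and the lower half of (C)) by direct citation of~\cite{Di14}, where the reproducing kernel on $B_2^d$ is computed explicitly from Jacobi polynomials and the addition formula for spherical harmonics. Some such explicit kernel computation appears to be unavoidable for the sharp boundary estimate, precisely because the ball is the reference domain against which every comparison argument in Sections~\ref{sect-chris}--\ref{sect-upper} is calibrated.

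For (B), your observation that affine comparison cannot yield strict monotonicity (the determinant obstruction forces $|\det T|>1$) is correct, and your plan to verify monotonicity term by term via the explicit orthonormal basis is exactly the content of~\cite[Lemma~2.1 and (2.3)]{Di14}, which the paper cites without reproducing.
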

\begin{proof}
With $\lambda=\frac12$, \cite[Theorem~2.3]{Di14} implies~\eqref{dA}, and~\cite[Lemma~2.1 and~(2.3)]{Di14} yield~\eqref{dB}. The lower bound $C(\P_n,B_2^d,\x)\ge c_1n^d$ for $\|\x\|_2\le a<1$ is given in the proof of~\cite[Theorem~2.3]{Di14}.
%[[A: Checking that paper I modified ``and~\cite[Lemma~2.1 and~(2.3)]{Di14} yield~\eqref{dB}'' to ``and proof of~\cite[Lemma~2.1]{Di14} and~\cite[(2.3)]{Di14} yield~\eqref{dB}'' because the monotonicity itself does not seem to be included in the statement of that Lemma.]]
For the proof in the other direction, it is sufficient to show $C(\P_n,B_2^d,\zero)\le c_2n^d$, and use Theorems~\ref{dt3.3} and~\ref{l3.5} as $\x+(1-a)B_2^d\subset B_2^d$ for $\|\x\|_2\le a$.

Using the orthonormal system $P_{n,j,l}^{\lambda,d}$ given in~\cite[(2.1)]{Di14} for $\lambda=\frac12$, we note that
\[
I:= \sum_{k=0}^n\sum_{j=0}^{k-2j}\sum_{l=1}^{d_{k-2j}}\left(P_{k,j,l}^{(\frac12,d)}(\zero)\right)^2
=\sum_{m=0}^{\lfloor \frac n2\rfloor} \left(P_{2m,m,1}^{(\frac12,d)}(\zero)\right)^2
\]
as all the other terms contain $\|\x\|_2$ to a positive power and hence are equal to zero at $\x=\zero$. We now have
\[
I=2^{(d+3)/2}\sum_{m=0}^{\lfloor \frac n2\rfloor}\left(P_m^{(0,(d-2)/2)}(-1)\right)^2 \approx \sum_{m=0}^{\lfloor \frac n2\rfloor} m^{d-1}\approx n^d
\]
as $P_m^{(0,(d-2)/2)}(-1)$, the normalized Jacobi polynomials, satisfy $P_m^{(0,(d-2)/2)}(-1)\approx m^{(d-1)/2}$ which is shown in~\cite[p.~168]{Sz}. We note that in~\cite{Sz} the value at $-1$ is given in the proof, more precisely, $P_m^{(0,(d-2)/2)}(-1)=\binom{m+(d-2)/2}{m}$ for the orthogonal \emph{nonnormalized} Jacobi polynomials, and the normalization contributes an additional factor equivalent to $m^{1/2}$, see~\cite[(4.3.3), p.~68]{Sz}.
\end{proof}

\begin{theorem}\label{dt3.4}
Suppose $D\subset\R^d$ is a compact set. Then
\be\label{d3.6}
C(\P_n,D)= \max_{\x\in D} C(\P_n,D,\x)\ge c n^{d+1}
\ee
and $c$ depends only on the diameter of $D$.
\end{theorem}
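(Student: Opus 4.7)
The plan is to reduce the bound for an arbitrary compact $D$ to the Euclidean ball, where Theorem~\ref{ball-nik} already delivers a lower estimate of order $n^{d+1}$. The machinery needed is precisely the pair of earlier results: domain monotonicity of the Christoffel function (Theorem~\ref{dt3.3}) and the affine transformation rule (Theorem~\ref{l3.5}).

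First, by compactness I would pick two points $\x_0,\x_1\in D$ realizing the diameter, so that $\|\x_1-\x_0\|_2=R:=\diam(D)$. Then $D\subset \x_0+RB_2^d$ by the very definition of diameter, and Theorem~\ref{dt3.3} applied at the point $\x_1\in D$ gives
$$C(\P_n,D,\x_1)\ge C(\P_n,\x_0+RB_2^d,\x_1).$$
Next, the affine map $T\y=\x_0+R\y$ has $|\det T|=R^d$ and sends the unit vector $\u:=R^{-1}(\x_1-\x_0)$ to $\x_1$. Applying Theorem~\ref{l3.5} to $T$ yields
$$C(\P_n,\x_0+RB_2^d,\x_1)=R^{-d}\,C(\P_n,B_2^d,\u).$$

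Finally, by properties~\eqref{dA} and~\eqref{dB} of Theorem~\ref{ball-nik}, the function $C(\P_n,B_2^d,\x)$ is monotone in $\|\x\|_2$ near the unit sphere and attains its supremum, of order $n^{d+1}$, on $\{\|\x\|_2=1\}$. Since $\|\u\|_2=1$, this gives $C(\P_n,B_2^d,\u)\ge c_0 n^{d+1}$, and chaining the three displayed inequalities produces
$$\max_{\x\in D}C(\P_n,D,\x)\ge C(\P_n,D,\x_1)\ge c_0 R^{-d}n^{d+1},$$
which is the claimed bound with $c=c_0\,\diam(D)^{-d}$, a constant depending only on $\diam(D)$.

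I do not anticipate a real obstacle here: the proof is essentially a routine chaining of the two preceding lemmas with the ball estimate. The only point requiring care is that the test point $\x_1$ must be chosen so that its image $\u$ lies on $\partial B_2^d$ rather than in the interior of $B_2^d$; otherwise~\eqref{dC} would give only $n^d$ and one power of $n$ would be lost. Taking $\x_0,\x_1$ to be diameter-realizing endpoints arranges exactly this.
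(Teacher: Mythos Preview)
Your proof is correct and follows essentially the same route as the paper: enclose $D$ in a ball, apply the domain monotonicity (Theorem~\ref{dt3.3}) and the affine transformation rule (Theorem~\ref{l3.5}), and invoke the boundary estimate for $B_2^d$ from Theorem~\ref{ball-nik}. Your choice of the enclosing ball $\x_0+RB_2^d$ with $R=\diam(D)$ and $\x_0$ a diameter endpoint is in fact slightly cleaner than the paper's version, which takes $r$ to be half the diameter without specifying the center (a choice that does not literally work for every compact set, e.g.\ an equilateral triangle).
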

\begin{proof}
By Theorem~\ref{ball-nik},
\[
C(\P_n,B_2^d,\x)=c_1n^{d+1}
\]
for any $\x$ on the sphere $\|\x\|_2=1$, and hence using Theorem~\ref{l3.5},
\[
C(\P_n,\x_0+rB_2^d,\x)=cn^{d+1}
\]
for any $\x$ satisfying $\|\x-\x_0\|_2=r$. Clearly, $D\subset \x_0+rB_2^d$ with $r$ half the diameter of $D$. There is $\x\in{D}$ such that $\|\x-\x_0\|_2=r$ and hence for that $\x$
\[
C(\P_n,{D},\x)\ge cn^{d+1}.
\]
As polynomials are continuous and $D$ is compact,
\[
C(\P_n,D)=\sup_{\x\in D}C(\P_n,D,\x)=\max_{\x\in D}C(\P_n,D,\x),
\]
%and the orthogonal basis with the respect to $D$ and $\overline{D}$ is the same ($m(D)=m(\overline{D})$), we have
%\[
%C(\P_n,D)=C(\P_n,\overline{D}),
%\]
which concludes the proof.
\end{proof}

%[[A: Sorry, but asking this again as I'm not convinced: why not say that $D$ is closed? Z: In the future there may be a situation that requires $D\ne\overline{D}$. A: Wouldn't then the required statement follow from the statement for closed sets? And what exactly would that situation be?]]

\begin{theorem}\label{dt3.5}
Suppose $D\subset\R^d$, $D$ is a compact set, and $D\supset \x_1+r_1B_2^d$. Then
\[
C(\P_n,D,\x)\le c_2 n^{d+1}
\quad\text{for }
\x\in \x_1+r_1B_2^d.
\]
If in addition ${D}=\bigcup_\lambda (\x_\lambda+r_\lambda B_2^d)$ and $\inf_\lambda r_\lambda=r>0$, then
\[
C(\P_n,D)\approx n^{d+1}.
\]
\end{theorem}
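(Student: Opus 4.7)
The plan is to assemble this from the machinery already developed: inclusion monotonicity (Theorem~\ref{dt3.3}), affine covariance (Theorem~\ref{l3.5}), the ball estimate (Theorem~\ref{ball-nik}), and the diameter-based lower bound (Theorem~\ref{dt3.4}).

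For the first statement, fix $\x \in \x_1 + r_1 B_2^d$. Since $\x_1 + r_1 B_2^d \subset D$, Theorem~\ref{dt3.3} gives
\[
C(\P_n, D, \x) \le C(\P_n, \x_1 + r_1 B_2^d, \x).
\]
Now let $T\y = \x_1 + r_1 \y$, so $T(B_2^d) = \x_1 + r_1 B_2^d$ and $|\det T| = r_1^d$. By affine invariance of $\P_n$ and Theorem~\ref{l3.5},
\[
C(\P_n, \x_1 + r_1 B_2^d, \x) = r_1^{-d}\, C(\P_n, B_2^d, T^{-1}\x) \le r_1^{-d}\, C(\P_n, B_2^d).
\]
By Theorem~\ref{ball-nik}, $C(\P_n, B_2^d) \approx n^{d+1}$, which yields $C(\P_n, D, \x) \le c_2 n^{d+1}$ with $c_2$ depending on $r_1$ and $d$.

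For the second statement, the lower bound $C(\P_n, D) \ge c n^{d+1}$ is immediate from Theorem~\ref{dt3.4}, since $D$ has finite diameter. For the matching upper bound, take any $\x \in D$. By hypothesis there exists $\lambda$ with $\x \in \x_\lambda + r_\lambda B_2^d \subset D$, and $r_\lambda \ge r$. Applying the first part of the theorem with $\x_1 := \x_\lambda$ and $r_1 := r_\lambda$ gives
\[
C(\P_n, D, \x) \le c\, r_\lambda^{-d}\, n^{d+1} \le c\, r^{-d}\, n^{d+1},
\]
with $c$ independent of $\lambda$ and $\x$. Taking the supremum over $\x \in D$ yields $C(\P_n, D) \le c\, r^{-d}\, n^{d+1}$, and combining with the lower bound concludes the proof.

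There is no real obstacle here; each step is a one-line appeal to a previously established result. The only point to be mindful of is that the constant $c_2$ in the first conclusion depends on $r_1$ (through the factor $r_1^{-d}$), which is precisely why the hypothesis $\inf_\lambda r_\lambda = r > 0$ is needed in the second part to pass from a pointwise bound to a uniform bound over $D$.
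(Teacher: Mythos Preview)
Your proof is correct and follows essentially the same approach as the paper: inclusion monotonicity (Theorem~\ref{dt3.3}) to pass from $D$ to the inscribed ball, affine covariance (Theorem~\ref{l3.5}) together with the ball estimate (Theorem~\ref{ball-nik}) for the upper bound, and Theorem~\ref{dt3.4} for the lower bound. You have simply written out in detail what the paper compresses into two sentences, including the explicit tracking of the $r_1^{-d}$ dependence that justifies the need for $\inf_\lambda r_\lambda>0$.
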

\begin{proof}
As Theorem~\ref{l3.5} and~\cite{Di14} imply $C(\P_n,\x_1+r_1B_2^d,\x)\le c n^{d+1}$ for $\x\in \x_1+r_1B_2^d$, \eqref{d3.4} implies the inequality. Theorem~\ref{dt3.4} and the first inequality imply the equivalence where the constants of the equivalence depend on the diameter of $D$ and on $r$.
\end{proof}

\begin{theorem}\label{cube_general}
Suppose $Q=[-1,1]^d=B_\infty^d$ is the cube in $\R^d$. Then $C(\P_n,Q)\approx n^{2d}$ and $C(\P_n,Q,(\pm1,\dots,\pm1))\ge c(d)n^{2d}$.
\end{theorem}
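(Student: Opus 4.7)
The plan is to exploit the product structure of $Q=[-1,1]^d$ and of the Lebesgue measure. Let $W_n$ denote the tensor product space of polynomials with degree $\le n$ in each variable separately; then $\dim W_n=(n+1)^d$ and $\P_n\subset W_n$, since every monomial of total degree $\le n$ has degree $\le n$ in each coordinate. The whole argument reduces both bounds to the one-dimensional Christoffel estimate, i.e.\ the case $d=1$ of Theorem~\ref{ball-nik} (noting $B_2^1=[-1,1]$).

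\textbf{Upper bound.} The variational formula in Theorem~\ref{dt3.1} makes $C(\cdot,Q,\x)$ monotone in its subspace argument, so $C(\P_n,Q,\x)\le C(W_n,Q,\x)$. Since $Q$ and Lebesgue measure are products, the tensor products of univariate orthonormal Legendre polynomials $\{\wt p_k\}_{k=0}^n$ form an orthonormal basis of $W_n$, and the definition~\eqref{d3.1} factorizes:
\[
C(W_n,Q,\x)=\prod_{i=1}^d\sum_{k=0}^n\wt p_k(x_i)^2=\prod_{i=1}^dC(\P_{n,1},[-1,1],x_i).
\]
Theorem~\ref{ball-nik} with $d=1$ bounds each univariate factor by $Cn^2$ uniformly in $x_i\in[-1,1]$, giving $C(\P_n,Q)\le C'n^{2d}$.

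\textbf{Lower bound at the vertices.} Set $m=\lfloor n/d\rfloor$. Theorem~\ref{ball-nik} with $d=1$ yields $C(\P_{m,1},[-1,1],1)\approx m^2$ (the unit sphere in $\R$ is $\{\pm 1\}$, and by the reflection $x\mapsto -x$ the value coincides at $\pm 1$). Theorem~\ref{dt3.1} then produces a univariate $q$ of degree $\le m$ with $q(1)=1$ and $\|q\|_{L_2([-1,1])}^2\le c_0 m^{-2}$. The tensorized polynomial $P(\x):=\prod_{i=1}^d q(x_i)$ has total degree $md\le n$, hence $P\in\P_n$; moreover $P(1,\dots,1)=1$ and $\|P\|_{L_2(Q)}^2=\|q\|_{L_2([-1,1])}^{2d}\le c_0^d m^{-2d}$. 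Applying Theorem~\ref{dt3.1} in the reverse direction, $C(\P_n,Q,(1,\dots,1))\ge\|P\|_{L_2(Q)}^{-2}\ge c(d) n^{2d}$. For any other vertex $(\eps_1,\dots,\eps_d)$ with $\eps_i\in\{\pm 1\}$, the reflection $T\x=(\eps_1 x_1,\dots,\eps_d x_d)$ is an affine self-map of $Q$ with $|\det T|=1$ sending $(1,\dots,1)$ to that vertex, so Theorem~\ref{l3.5} transports the estimate.

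The only real subtlety is the degree bookkeeping in the lower bound: to keep the tensorized polynomial in $\P_n$, each univariate factor may have degree only $\le n/d$, costing a $d$-dependent factor in the constant that is absorbed into $c(d)$. Otherwise both bounds are direct tensor-product computations built on the one-dimensional Christoffel asymptotics of Theorem~\ref{ball-nik}.
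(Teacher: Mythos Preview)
Your proof is correct and follows the same tensor-product strategy as the paper: for the lower bound both arguments place univariate factors of degree $\le n/d$ at each coordinate (the paper sums over the orthonormal basis using $\wt p_k(\pm1)^2\approx k$, while you use the equivalent variational form from Theorem~\ref{dt3.1}). The one difference worth noting is the upper bound, where the paper simply cites~\cite[Theorem~6.6]{DiTi}, whereas your enlargement $\P_n\subset W_n$ together with the factorization $C(W_n,Q,\x)=\prod_{i=1}^dC(\P_{n,1},[-1,1],x_i)$ gives a self-contained derivation from the one-dimensional case of Theorem~\ref{ball-nik}.
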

\begin{proof}
Using~\cite[Theorem~6.6, pp.~119--120]{DiTi} with $\alpha_i=\beta_i=0$, we have $C(\P_n,Q)\le c_1 n^{2d}$. Using~\cite[(7.32.2), p.~168]{Sz}, the one-dimensional orthonormal polynomials with weight $w(x)=1$ ($\alpha_i=\beta_i=0$) each satisfies $P_k(\pm1)\ge c_1 k^{1/2}$ as it is shown that that the maximum occurs at both $-1$ and $1$ (whenever $\alpha=\beta\ge-\frac12$). Therefore,
\begin{align*}
C(\P_n,B_\infty^d,(\pm1,\dots,\pm1)) &= \sum_{l=0}^n \prod_{\sum_{i=1}^dk_i=l}(P_{k_i}(\pm1))^2\\
&\ge \prod_{i=1}^d\left(\sum_{k_i=0}^{\lfloor \frac nd\rfloor}(P_{k_i}(\pm1))^2\right) \ge c^d \prod_{i=1}^d \sum_{k_i=0}^{\lfloor \frac nd\rfloor} k_i \\
&\ge c_1^d \prod_{i=1}^d c_2 \left(\frac nd\right)^2\ge c(d) n^{2d}.
\end{align*}
Hence, $C(\P_n,B_\infty^d)\approx n^{2d}$.
\end{proof}

%[[A: The following (and the lower bound of the previous) theorem are partial cases of Theorem~\ref{main-lower}. Perhaps it may be mentioned that we will provide a more general construction of polynomials in a later section. Comments after general results.]]

\begin{theorem}\label{dt3.7}
Suppose for an affine transformation $T$ on $\R^d$ and the cube $Q=[-1,1]^d=B_\infty^d$ a compact set $D$ satisfies $D\subset TQ$ and suppose further that $\x=T(1,\dots,1)\in D$. Then
\[
C(\P_n,D,\x)\ge c_1 n^{2d} \quad\text{and}\quad C(\P_n,D)\ge c_1 n^{2d},
\]
where $c_1$ depends on $\det T$ and $d$.
\end{theorem}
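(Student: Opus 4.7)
The plan is to chain three results already established: Theorem~\ref{cube_general} (which gives the lower bound at a vertex of the standard cube), Theorem~\ref{l3.5} (affine invariance of the Christoffel function) and Theorem~\ref{dt3.3} (monotonicity of $C$ under shrinking of the domain).

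First I would apply Theorem~\ref{cube_general} to the standard cube $Q=B_\infty^d$ at its vertex $\v=(1,\dots,1)$ to obtain
\[
C(\P_n,Q,\v)\ge c(d)\,n^{2d}.
\]
Next I would push this forward under $T$. Since the space of polynomials of total degree at most $n$ is affine-invariant, namely $\P_n^T=\P_n$, Theorem~\ref{l3.5} gives
\[
C(\P_n,TQ,T\v)=C(\P_n,Q,\v)\,|\det T|^{-1}\ge \frac{c(d)}{|\det T|}\,n^{2d}.
\]

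To pass from $TQ$ to the smaller set $D$, I would use the hypothesis $D\subset TQ$ together with Theorem~\ref{dt3.3}, which yields
\[
C(\P_n,D,\y)\ge C(\P_n,TQ,\y)\quad\text{for every }\y\in D.
\]
Applying this at $\y=\x=T\v\in D$ combines with the previous bound to give
\[
C(\P_n,D,\x)\ge C(\P_n,TQ,T\v)\ge \frac{c(d)}{|\det T|}\,n^{2d},
\]
which is the first desired inequality with $c_1=c(d)/|\det T|$. The second inequality is then immediate from
\[
C(\P_n,D)=\sup_{\y\in D}C(\P_n,D,\y)\ge C(\P_n,D,\x).
\]

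There is really no obstacle here: the statement is a clean three-line assembly of affine invariance, domain monotonicity, and the cube vertex bound. The only thing to be slightly careful about is to note that $\P_n^T=\P_n$ so that Theorem~\ref{l3.5} genuinely gives an equality of the same Christoffel functions (rather than of two a priori different spaces), and that the constant $c_1$ produced in this way depends on $d$ via $c(d)$ and on $T$ only through $|\det T|$, matching the claim.
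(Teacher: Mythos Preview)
Your proof is correct and follows exactly the same approach as the paper: chain Theorem~\ref{dt3.3}, Theorem~\ref{l3.5}, and Theorem~\ref{cube_general} to obtain $C(\P_n,D)\ge C(\P_n,D,\x)\ge C(\P_n,TQ,\x)=|\det T|^{-1}C(\P_n,Q,(1,\dots,1))\ge c_1 n^{2d}$. The paper compresses this into a single displayed chain of inequalities, but the content is identical.
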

\begin{proof}
Using Theorem~\ref{dt3.3}, Theorem~\ref{l3.5}, and Theorem~\ref{cube_general}, we conclude that
\[
C(\P_n,D) \ge C(\P_n,D,\x) \ge C(\P_n, TQ, \x)=|\det T|^{-1} C(\P_n,Q,(1,\dots,1))\ge c_1 n^{2d}.
\]
\end{proof}

\begin{theorem}\label{cube-in}
For any convex body $D\subset\R^d$ we have
\[
C(\P_n,D)\le c n^{2d},
\]
where $c>0$ depends only on $D$ and does not depend on $n$.
\end{theorem}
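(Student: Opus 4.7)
The plan is to combine Theorem~\ref{boundary_const}, which reduces the problem to the boundary of $D$, with a geometric construction that inscribes an affine cube of fixed volume inside $D$ at each boundary point.

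By Theorem~\ref{boundary_const} it suffices to prove $C(\P_n,D,\x)\le c\,n^{2d}$ for $\x\in\partial D$. Since $D$ is a convex body, I fix once and for all a ball $B(\z_0,r_0)\subset D$. For $\x\in\partial D$, set $R=|\x-\z_0|\ge r_0$, and by convexity the cone $K_\x=\conv(\{\x\}\cup B(\z_0,r_0))$ is contained in $D$.

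The key step is to exhibit an affine parallelepiped $P=T_\x Q\subset K_\x\subset D$ with $\x\in P$ and $|\det T_\x|\ge c(D,d)>0$, where $Q=[-1,1]^d$. Working in coordinates with $\x=\zero$ and $\z_0=Re_1$, set
\[
v_1=\frac{R}{d}\,e_1,\qquad v_i=\frac{R}{d}\,e_1+\frac{r_0}{d}\,e_i\quad(i=2,\dots,d),
\]
and let $P=\{\sum_{i=1}^d t_iv_i:t_i\in[0,1]\}$, a parallelepiped with $\zero=\x$ as a vertex. A direct check shows $P\subset K_\x$: a generic point of $P$ has coordinates $s_1=(R/d)\sum_i t_i\in[0,R]$ and $s_j=(r_0/d)t_j$ for $j\ge 2$, and the elementary inequality $\sum_{j\ge 2}t_j^2\le(\sum_i t_i)^2$ (valid since $t_i\ge 0$) gives $\sum_{j\ge 2}s_j^2\le(s_1 r_0/R)^2$, placing the point in the sub-cone $\{s:0\le s_1\le R,\ \sum_{j\ge 2}s_j^2\le(s_1 r_0/R)^2\}$ of $K_\x$. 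A determinant calculation yields $\Vol(P)=Rr_0^{d-1}/d^d\ge r_0^d/d^d$, hence $|\det T_\x|=\Vol(P)/2^d\ge r_0^d/(2d)^d$.

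With the construction in place, the remaining argument is mechanical. Theorem~\ref{dt3.3} gives $C(\P_n,D,\x)\le C(\P_n,P,\x)$ since $P\subset D$ and $\x\in P$; Theorem~\ref{l3.5} rewrites this as $|\det T_\x|^{-1}C(\P_n,Q,T_\x^{-1}\x)$; and Theorem~\ref{cube_general} bounds $C(\P_n,Q,\y)\le C(\P_n,Q)\le c\,n^{2d}$ uniformly in $\y\in Q$. I anticipate the main obstacle to be precisely the geometric construction above, namely the verification that the tilted parallelepiped $P$ fits inside the cone $K_\x$; the rest is a routine assembly of tools from Sections~\ref{sect-chris} and~\ref{sect-christ-ball-cube}.
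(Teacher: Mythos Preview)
Your proof is correct and follows essentially the same approach as the paper: inscribe an affine image of the cube with a vertex at $\x$ inside the cone $\conv(\{\x\}\cup B)$, then invoke Theorems~\ref{dt3.3}, \ref{l3.5} and~\ref{cube_general}. The paper simply asserts the existence of such a cube with uniformly bounded $|\det T|$ for every $\x\in D$, whereas you supply the explicit tilted parallelepiped and verify the inclusion and the volume bound; your preliminary reduction to $\x\in\partial D$ via Theorem~\ref{boundary_const} is not in the paper's proof, but it is harmless (and in fact needed for your volume bound $R\ge r_0$).
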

\begin{proof}
Since $D$ has non-empty interior, there is a Euclidean ball $B\subset D$. For any $\x\in D$, the convex hull of $\{\x\}$ and $B$ contains an affine image $TQ$ of the cube $Q=[-1,1]^d=B_\infty^d$ with $T(1,\dots,1)=\x$ and $|\det T|>c_1$ for some $c_1>0$ independent of $\x$. Then by Theorem~\ref{dt3.3}, Theorem~\ref{l3.5}, and Theorem~\ref{cube_general},
\[
C(\P_n,D,\x) \le C(\P_n, TQ, \x)=|\det T|^{-1} C(\P_n,Q,(1,\dots,1))\le c n^{2d},
\]
and the proof is complete as $\x\in D$ was chosen arbitrarily.
\end{proof}

%\begin{theorem}\label{dt3.8}
%Suppose $D$ satisfies the conditions of Theorem~\ref{dt3.7} and in addition $D=\bigcup_{\lambda}T_\lambda Q$ for a family $T_\lambda$ of affine transforms with $\inf_\lambda |\det T_\lambda|\ge\Lambda>0$. Then
%\[
%C(\P_n,D)%=\sup_{\x\in D} C(\P_n,D,\x)
%\approx n^{2d}.
%\]
%\end{theorem}
%\begin{proof}
%The result is a combination of Theorem~\ref{dt3.7}, Theorem~\ref{cube_general} and Theorem~\ref{dt3.3}.
%\end{proof}

%We note that $C(\P_n,D)\approx n^\sigma$ for all positive integers $n$ with $\sigma=\sigma(D)$ implies the Nikol'skii type inequality
%\[
%\|P\|_{L_r(D)}\le c n^{(\frac1q-\frac1r)\sigma}\|P\|_{L_q(D)},\quad P\in\P_n,\quad 0<q<r\le\infty.
%\]
%[[Discuss that the order cannot be improved, so our computations of $C(\P_n,D)$ imply both the Nikol'skii inequality and its sharpness. Easy to remark for $(1,\infty)$ and $(2,\infty)$, what about other situations?]]

Some simple examples of applications of the above theorems in the context of Nikol'skii inequalities are given in Section~\ref{sect-examples}.

\section{Upper estimate of $C(\P_n,D)$}\label{sect-upper}

The quite elementary comparison result Theorem~\ref{dt3.3} used in conjunction with Theorem~\ref{l3.5} will be extremely useful for lower and upper estimates of $C(\P_n,D)$. A simple illustration of this idea is Theorem~\ref{dt3.5}, where a Euclidian ball is inscribed into the domain. This may fail to work near the boundary of a domain with large curvature. To remedy this, we will employ two more ideas: affine transformations to ``squeeze'' the ball, and an ``extension'' that allows one to step away slightly from the boundary of the domain. We begin with an estimate of $C(\P_n, B_2^d, \v_n^d)$, where the point $\v_n^d:=(1+n^{-2}/3,0,\dots,0)\in\R^d$ is outside of the ball $B_2^d$.
\begin{lemma}\label{refball}
We have
\[
C(\P_n, B_2^d, \v_n^d)\le c n^{d+1},
\]
where $c$ depends only on $d$.
\end{lemma}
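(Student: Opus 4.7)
The plan is to bound $C(\P_n,B_2^d,\v_n^d)$ by reducing to (i) the known Nikol'skii estimate for $\P_n$ on $B_2^d$ (Theorem~\ref{ball-nik}) and (ii) a classical one-dimensional Chebyshev extrapolation that transfers information from $[-1,1]$ to a point slightly outside. By Theorem~\ref{dt3.1}, the statement to prove is equivalent to $|f(\v_n^d)|^2 \le c n^{d+1}\|f\|_{L_2(B_2^d)}^2$ for every $f\in\P_n$.

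The main idea is: first show that $|f(\v_n^d)|$ is comparable to $\|f\|_{L_\infty(B_2^d)}$, and then invoke the multivariate $L_\infty$--$L_2$ Nikol'skii bound on the ball. For the first step, fix $f\in\P_n$ and consider the univariate restriction $h(t):=f(t,0,\dots,0)$, which is a polynomial of degree at most $n$. Since $\{(t,0,\dots,0):t\in[-1,1]\}\subset B_2^d$, we have $|h(t)|\le \|f\|_{L_\infty(B_2^d)}$ on $[-1,1]$, and the classical Chebyshev growth inequality gives $|h(x)|\le T_n(x)\|f\|_{L_\infty(B_2^d)}$ for $x>1$, where $T_n(x)=\cosh(n\operatorname{arccosh} x)$. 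Using $\operatorname{arccosh}(1+s)\le \sqrt{2s}$ for $s\ge 0$ small, with $s=n^{-2}/3$ one gets $n\operatorname{arccosh}(1+n^{-2}/3)\le \sqrt{2/3}$, hence $T_n(1+n^{-2}/3)\le \cosh(\sqrt{2/3})=:c_1$. Therefore $|f(\v_n^d)|=|h(1+n^{-2}/3)|\le c_1\|f\|_{L_\infty(B_2^d)}$. This is the crucial observation and explains the choice of the scale $n^{-2}$ in the definition of $\v_n^d$: at distance $O(n^{-2})$ from $\partial B_2^d$, polynomials of degree $n$ grow by at most a constant factor.

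The second step is immediate from Theorem~\ref{ball-nik}, which gives $C(\P_n,B_2^d)\le c_2 n^{d+1}$, or equivalently $\|f\|_{L_\infty(B_2^d)}^2\le c_2 n^{d+1}\|f\|_{L_2(B_2^d)}^2$. Chaining the two estimates yields
\[
|f(\v_n^d)|^2\le c_1^2\,\|f\|_{L_\infty(B_2^d)}^2\le c_1^2 c_2\, n^{d+1}\|f\|_{L_2(B_2^d)}^2,
\]
so by Theorem~\ref{dt3.1} we conclude $C(\P_n,B_2^d,\v_n^d)\le c n^{d+1}$. There is no real obstacle: both ingredients are standard, and the only calculation to verify is the estimate $T_n(1+n^{-2}/3)=O(1)$, which is the routine expansion above.
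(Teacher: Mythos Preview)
Your proof is correct and follows the same overall strategy as the paper: reduce to the one-dimensional restriction along the $x_1$-axis, show that evaluation at $1+n^{-2}/3$ is controlled by a constant times $\|f\|_{L_\infty(B_2^d)}$, and then invoke Theorem~\ref{ball-nik}. The only difference lies in the one-dimensional tool used for the extrapolation step. The paper argues by contradiction using Markov's inequality: assuming $\|P_n\|_{L_\infty(B_2^d)}<\tfrac12$, it bounds the derivative on a segment of length at most $\tfrac73$ and derives $|P_n(\x)|\ge\tfrac12$ at $\x=(1,0,\dots,0)$, a contradiction. You instead apply the classical Chebyshev extrapolation inequality $|h(x)|\le T_n(x)\|h\|_{L_\infty[-1,1]}$ directly, together with the clean estimate $T_n(1+n^{-2}/3)\le\cosh\sqrt{2/3}$. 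Your route is slightly more direct and yields an explicit constant, while the paper's Markov-based argument is a bit more roundabout but avoids citing the Chebyshev growth result. Either way, the substance of the argument is the same.
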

\begin{proof}
Let $P_n\in\P_n$ be such that $P_n(\v_n^d)=1$ and $C(\P_n, B_2^d, \v_n^d)=\|P_n\|_{L_2(B_2^d)}^{-2}$. We claim that $\|P_n\|_{L_\infty(B_2^d)}\ge\frac12$. Let $\y$ be the point on the segment joining $\x:=(1,0,\dots,0)$ and $\v_n^d$ where $|P_n|$ attains maximum value. As $P_n(\v_n^d)=1$, we have $|P_n(\y)|=:M\ge1$. If we assume $\|P_n\|_{L_\infty(B_2^d)}<\frac12$, then the largest value of $|P_n|$ on the segment $I$ joining $-\x$ and $\y$ is $M$, and it is attained at $\y$. We apply Markov's inequality on $I$ for $P_n$ and the mean value theorem to obtain (note that the length of $I$ is at most $2+n^{-2}/3\le\frac73$)
\[
|P_n(\x)-P_n(\y)|\le \frac{7}6 n^2 |\x-\y| M\le \frac7{18}M<\frac12M.
\]
Hence, $|P_n(\x)|\ge M/2\ge1/2$, which leads to a contradiction.

With $Q_n(\cdot):=P_n(\cdot)/\|P_n\|_{L_\infty(B_2^d)}$, and using Theorem~\ref{ball-nik}, we conclude
\[
C(\P_n, B_2^d, \v_n^d)=\|P_n\|_{L_2(B_2^d)}^{-2}\le 4 \|Q_n\|_{L_2(B_2^d)}^{-2}\le 4 C(\P_n,B_2^d) \le c n^{d+1}.
\]
\end{proof}

The following result is the core of our ``extension'' technique.
\begin{theorem}\label{main-ell}
Suppose $D\subset\R^d$ is a compact set, $T$ is an affine transformation of $\R^d$ such that $TB_2^d\subset D$. Then
\[
C(\P_n,D,T\v_n^d)\le c |\det T|^{-1}n^{d+1}
\]
where $c$ depends only on $d$.
\end{theorem}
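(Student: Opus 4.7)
The plan is to chain together three results already established: Lemma~\ref{refball}, Theorem~\ref{l3.5} (affine invariance of the Christoffel function), and Theorem~\ref{dt3.3} (monotonicity under inclusion of domains). The key observation is that the Christoffel function for polynomials makes sense at any point of $\R^d$, not only points in the domain of integration; this is important because $\v_n^d \notin B_2^d$, and hence $T\v_n^d$ need not lie in $TB_2^d$ (or in $D$).

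First I would apply Theorem~\ref{l3.5} to the ball $B_2^d$ with the transformation $T$. Since $\P_n$ is affine-invariant (i.e., $\P_n^T = \P_n$), this gives
\[
C(\P_n, TB_2^d, T\v_n^d) = |\det T|^{-1}\, C(\P_n, B_2^d, \v_n^d).
\]
By Lemma~\ref{refball}, the right-hand side is bounded by $c\, |\det T|^{-1} n^{d+1}$ with $c$ depending only on $d$.

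Next I would invoke Theorem~\ref{dt3.3} with $D_1 = TB_2^d$ and $D_2 = D$. The hypothesis $TB_2^d \subset D$ is exactly what is needed, and since we are working with polynomials (which are continuous on all of $\R^d$), the conclusion of Theorem~\ref{dt3.3} applies to the point $\x = T\v_n^d \in \R^d$ even though it may fail to lie in $TB_2^d$. This gives
\[
C(\P_n, D, T\v_n^d) \le C(\P_n, TB_2^d, T\v_n^d) \le c\, |\det T|^{-1} n^{d+1},
\]
which is the desired estimate.

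There is no real obstacle here; the main subtlety is simply remembering to use the extended form of Theorem~\ref{dt3.3} for $\x \in \R^d$, which is the reason the authors carefully flagged the ``or $\x \in \R^d$'' clause earlier. The nontrivial analytic content has already been done in Lemma~\ref{refball} (via Markov's inequality on a segment), so the present theorem is essentially a packaging statement that extends that estimate to arbitrary affine ellipsoids inscribed in $D$.
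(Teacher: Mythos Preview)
Your proposal is correct and follows exactly the same approach as the paper, which simply states that the result follows immediately from Theorem~\ref{dt3.3}, Lemma~\ref{refball}, and Theorem~\ref{l3.5}. You have correctly identified and explained the only subtlety, namely that Theorem~\ref{dt3.3} must be applied in its extended form for $\x\in\R^d$.
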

\begin{proof}
The result follows immediately from Theorem~\ref{dt3.3}, Lemma~\ref{refball} and Theorem~\ref{l3.5}.
\end{proof}

According to the above theorem, the Christoffel function $C(\P_n,D,\x)$ can be bounded from above if the appropriate affine image of $B_2^d$ can be inscribed into the domain, while mapping $\v_n^d$ into $\x$. This is applicable for any fixed $n$. Our next result provides a bound for all $n$ if a certain ``cone''-type set (as in~\eqref{cone-in}) can be inscribed into the domain. Note that for $s=1$ \eqref{cone-in} describes a right circular cone, while for $s=1/2$ we obtain almost a spherical cap.

\begin{theorem}\label{cone-set-ell}
Suppose $D\subset\R^d$ is a compact set, $\x\in D$, the vector $\u\in\R^d$ has unit length, $\beta_1$ and $\beta_2$ are positive constants, $s\in[\frac12,1]$, and furthermore
%for any $\delta\in[0,\alpha]$, any $\lambda\in[0,\beta]$, and any unit vector $\v\in\R^d$ orthogonal to $\u$
\begin{equation}\label{cone-in}
\{\x+\delta\u+\lambda \delta^{s} \v: \delta\in[0,\beta_1], \lambda\in[0,\beta_2], \v\in\R^d, |\v|=1, \u\perp\v \}\subset D.
\end{equation}
Then
\[
C(\P_n,D,\x)\le c n^{2+2s(d-1)}, \quad n\ge1,
\]
where $c$ depends only on $\beta_1$, $\beta_2$, and $d$.
\end{theorem}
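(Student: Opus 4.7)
The plan is to apply the extension technique of Theorem~\ref{main-ell} by constructing an affine map $T$ with $T\v_n^d=\x$ and $TB_2^d\subset D$, then reading off the estimate from $|\det T|$. Let $\v_1,\dots,\v_{d-1}$ be an orthonormal basis of $\u^\perp$, and for parameters $a,b>0$ to be chosen, define
$$T\y=\x+a(1+n^{-2}/3)\u-ay_1\u+b\sum_{i=2}^d y_i\v_{i-1}.$$
Then $T\v_n^d=\x$ by direct substitution, and $TB_2^d$ is an ellipsoid centered at $\x+a(1+n^{-2}/3)\u$ with semi-axis $a$ along $\u$ and semi-axes $b$ in the perpendicular directions. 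Since $|\det T|=ab^{d-1}$, the extension lemma will give $C(\P_n,D,\x)\le c\,a^{-1}b^{-(d-1)}n^{d+1}$, so I must take $b$ as large as possible subject to the geometric constraint $TB_2^d\subset D$.

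Points of $TB_2^d$ take the form $\x+\delta\u+r\v$ with $|\v|=1$, $\v\perp\u$; setting $\eta=1-y_1\in[0,2]$ gives
$$\delta=a(n^{-2}/3+\eta),\qquad r\le b\sqrt{\eta(2-\eta)},$$
and choosing $a\le \alpha/3$ ensures $\delta\in[0,\alpha]$. For such a point to lie in the set of \eqref{cone-in}, I need $r\le\beta\delta^s$, which (after squaring and using $\eta(2-\eta)\le 2(\eta+n^{-2}/3)$) reduces to
$$\frac{2b^2}{\beta^2 a^{2s}}\le (n^{-2}/3+\eta)^{2s-1}\quad\text{for all }\eta\in[0,2].$$
Since $s\ge\tfrac12$, the right-hand side is minimized at $\eta=0$ with value of order $n^{-2(2s-1)}$. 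Thus $b=c_0 a^s n^{-(2s-1)}$ with sufficiently small $c_0=c_0(\beta,s)$ forces $TB_2^d\subset D$.

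With these choices, $|\det T|=a\,b^{d-1}\ge c_1 n^{-(2s-1)(d-1)}$, where $c_1$ depends only on $\alpha,\beta,d$, and Theorem~\ref{main-ell} yields
$$C(\P_n,D,\x)\le c\,|\det T|^{-1}n^{d+1}\le c'n^{(2s-1)(d-1)+d+1}=c'n^{2+2s(d-1)},$$
which is the desired bound. The main technical obstacle is the uniform inscription check: near the vertex the cone narrows like $\delta^s$ while the ellipsoid narrows only like $\sqrt{\delta}$ near its tip at $\x+(an^{-2}/3)\u$. The offset $an^{-2}/3$ of this tip from $\x$ — inherited from $\v_n^d$ sitting just outside $B_2^d$ — is precisely what permits the inscription, provided $b$ shrinks at rate $n^{-(2s-1)}$; this relationship is exactly what generates the exponent $2+2s(d-1)$ in the final estimate, and it recovers the known thresholds $d+1$ for the ball (where $s=\tfrac12$) and $2d$ for the cube (where $s=1$ at a vertex).
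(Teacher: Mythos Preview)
Your proof is correct and follows essentially the same approach as the paper: construct the affine map sending $\v_n^d$ to $\x$, with semi-axis $a\approx\alpha$ along $\u$ and semi-axes $b\approx\beta n^{-(2s-1)}$ in the orthogonal directions, verify $TB_2^d\subset D$ via the inequality $b\sqrt{2\eta}\le\beta a^s(\eta+n^{-2}/3)^s$, and read off the exponent from $|\det T|^{-1}n^{d+1}$. The only cosmetic difference is that the paper writes $T$ as a rotation composed with a diagonal scaling rather than in terms of an explicit orthonormal basis of $\u^\perp$; one small point worth tightening is that your $c_0=c_0(\beta,s)$ is in fact bounded below by $\beta/\sqrt{6}$ uniformly for $s\in[\tfrac12,1]$, so the final constant indeed depends only on $\alpha,\beta,d$ as claimed.
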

\begin{proof}
We need to construct an affine transformation $T$ with not too small $|\det T|$ such that $T\v_n^d=\x$ and $TB_2^d\subset D$. The idea is that $T$ maps the $x_1$ axis to the line through $\x$ parallel to $\u$. More precisely, take $T(\cdot)=\x+A(\cdot-\v_n^d)$, where the matrix $A$ is decomposed as $A=A_2A_1$. Let $A_2$ be any rotation of $\R^d$ mapping $(-1,0,\dots,0)$ to $\u$, and let $A_1(x_1,\dots,x_d)=(\frac{\beta_1}3 x_1,\mu x_2,\dots,\mu x_d)$, where $\mu=\frac{{\beta_2}}{\sqrt{6}}n^{-2s+1}$. We have $T\v_n^d=\x$ and
\[
T(-1,0,\dots,0)=\x+(2+n^{-2}/3)\frac{\beta_1}3 \u,
\]
where clearly $(2+n^{-2}/3)\frac13<1$.

To check that $TB_2^d\subset D$, we will use the condition~\eqref{cone-in}.
Note that $\sqrt{1-(1-t)^2}\le \sqrt{2t}$, $t\in[0,2]$. Therefore, any point of $B_2^d$ with its first coordinate equal to $1-t$ is located at most $\sqrt{2t}$ away from $(1-t,0,\dots,0)$ in a direction orthogonal to $(1,0,\dots,0)$. This fact and the structure of $T$ imply that it is sufficient to prove
\[
\mu\sqrt{2t}\le {\beta_2} (t+n^{-2}/3)^s, \quad t>0.
\]
Indeed, since $s\in[1/2,1]$, we have
\begin{align*}
\mu\sqrt{2t} &= \frac{{\beta_2}}{\sqrt{3}} n^{-2s+1}t^{1/2} = \frac{{\beta_2}}{\sqrt{3}} 3^{s-1/2} (n^{-2}/3)^{s-1/2}t^{1/2} \\
&\le {\beta_2} (\max\{n^{-2}/3,t\})^{(s-1/2)+1/2}\le {\beta_2} (t+n^{-2}/3)^s.
\end{align*}
Finally, $|\det T|=\frac{\beta_1}3\mu^{d-1}= c_1 n^{(-2s+1)(d-1)}$, and Theorem~\ref{main-ell} yields
\[
C(\P_n,D,T\v_n^d)\le c_2 |\det T|^{-1}n^{d+1}\le c n^{(2s-1)(d-1)+d+1}=c n^{2+2s(d-1)}
\]
as required.
\end{proof}

\begin{remark}
If $D$ is convex and we are interested in estimating $C(\P_n,D)$ from above, then Theorem~\ref{boundary_const} (or Theorem~\ref{sharp_th}) allows one to consider only the case when the point $T\v^d_n$ in Theorem~\ref{main-ell} or the point $\x$ in Theorem~\ref{cone-set-ell} belongs to the boundary (or to a sharp subset, see Definition~\ref{sharp_def}) of $D$.
\end{remark}

\section{Lower estimate of $C(\P_n,D)$}\label{sect-lower}

To obtain lower estimates of $C(\P_n,D)$ we construct examples of algebraic polynomials that have uniform norm of constant order and a small $L_2$ norm. The main ingredient of such examples will be good univariate polynomials constructed in Lemma~\ref{kernel_pol}. We thank Igor Shevchuk for suggesting the reference~\cite{KLS}, which contains useful technical details that allowed us to simplify the proof of the lemma. We also note that this lemma can be obtained from the somewhat stronger result~\cite[5.6.25b, p.~250]{Tr-Be} proved in~\cite{Tr}. However, the article~\cite{Tr} is hard to access, so we include our relatively short proof here.

%[[Z: should we have $m<n$? A: $P_n\equiv 1$ works for small $n$ by choosing large $c_1$. In fact, if $k=1$ in the proof, then $P_n(x)=(\frac{T_1(x)}{x})^m\equiv1$.]]

\begin{lemma}\label{kernel_pol}
Denote $\rho_n(x):=\frac1{n^2}+\frac1n\sqrt{1-x^2}$. For any $n,m\ge1$ and $y\in[-1,1]$,  there exists a polynomial $P_n=P_{n,m,y}$ of degree $\le n$ such that
\begin{equation}\label{kernel_low}
P_n(y)=1,
\end{equation}
and
\begin{equation}\label{kernel_up}
|P_n(x)|\le c_1 \left(\frac{\rho_n(y)}{\rho_n(y)+|x-y|}\right)^m, \quad x\in[-1,1],
\end{equation}
where $c_1>0$ depends only on $m$.
\end{lemma}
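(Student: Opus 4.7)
My plan is to pass to trigonometric variables via $x=\cos\theta$, $y=\cos\phi$ with $\theta,\phi\in[0,\pi]$, construct an even trigonometric polynomial $Q_n(\theta)$ of degree $\le n$ with the required localisation about $\theta=\phi$, and then set $P_n(x):=Q_n(\arccos x)$; evenness in $\theta$ is exactly what is needed for $P_n$ to be an algebraic polynomial in $x$ of the same degree.

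For the building block I take the Jackson kernel
\[
J_{k,m}(\alpha):=\left(\frac{\sin(k\alpha/2)}{k\sin(\alpha/2)}\right)^{2m},
\]
which is a non-negative, even, $2\pi$-periodic trigonometric polynomial of degree $m(k-1)$ with $J_{k,m}(0)=1$ and the standard pointwise bound $J_{k,m}(\alpha)\le c(m)\min(1,(k|\alpha|)^{-2m})$ on $[-\pi,\pi]$. Choosing $k:=\lfloor n/m\rfloor+1$, I set
\[
Q_n(\theta):=\frac{J_{k,m}(\theta-\phi)+J_{k,m}(\theta+\phi)}{J_{k,m}(0)+J_{k,m}(2\phi)}.
\]
This has trigonometric degree $\le n$, is even in $\theta$, and satisfies $Q_n(\phi)=1$, so the resulting $P_n$ is an algebraic polynomial of degree $\le n$ with $P_n(y)=1$. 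In the degenerate range $n<m$ the claimed bound is trivial (with constant depending on $m$) because $\rho_n(y)\ge m^{-2}$ and $|x-y|\le 2$, so I simply take $P_n\equiv 1$ there.

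For the pointwise estimate, the denominator is $\ge 1$, so it suffices to dominate each summand $J_{k,m}(\theta\pm\phi)$ by $c_1(\rho_n(y)/(\rho_n(y)+|x-y|))^m$. I would combine the Jackson decay with the classical dictionary
\[
|x-y|\asymp|\theta-\phi|\bigl(\sqrt{1-y^2}+|\theta-\phi|\bigr),\qquad \theta,\phi\in[0,\pi],
\]
and $\rho_n(y)\asymp n^{-1}(n^{-1}+\sqrt{1-y^2})$. Writing $a=1/n$, $b=\sqrt{1-y^2}$, $t=|\theta-\phi|$, the task reduces, up to $m$-dependent constants, to the elementary inequality
\[
\min\bigl(1,(t/a)^{-2}\bigr)\le C\cdot\frac{a(a+b)}{(a+t)(a+b+t)},
\]
which I would verify by a short case split on whether $t\le a$ or $t>a$ and whether $t\le b$ or $t>b$. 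The summand involving $\theta+\phi$ is treated identically once one notes that $\cos\theta-\cos(-\phi)=x-y$, so the angular distance of $\theta+\phi$ to integer multiples of $2\pi$ plays the role of $|\theta-\phi|$; the $2\pi$-periodicity of $J_{k,m}$ makes this automatic.

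The main obstacle is the last displayed inequality together with the bookkeeping near $\phi\in\{0,\pi\}$, where $\theta+\phi$ may exceed $\pi$ and the reflected image $-\phi$ collides with $\phi$ modulo $2\pi$. Once that elementary algebra is in place, the whole construction is essentially dictated by the Jackson kernel paired with the Chebyshev substitution, and the constant absorbed into $c_1$ depends only on $m$, as claimed.
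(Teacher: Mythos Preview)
Your proposal is correct and follows a genuinely different route from the paper. The paper works directly in the algebraic variable: with $k=\lfloor n/m\rfloor+1$ it takes the Chebyshev polynomial $T_k$, locates the Chebyshev node interval $[x_j,x_{j-1}]$ containing $y$, and builds the auxiliary polynomial $t_j(x)=T_k(x)(x_{j-1}-x_j)/(x-\tilde x_j)$ (where $\tilde x_j$ is the zero of $T_k$ in that interval), quoting bounds from \cite{KLS} to show $|t_j|\asymp 1$ on $[x_j,x_{j-1}]$ and $|t_j(x)|\le c\,\rho_k(y)/(\rho_k(y)+|x-y|)$ globally; then $P_n:=(t_j/t_j(y))^m$. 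Your construction instead passes to angular variables, uses the Jackson kernel $J_{k,m}$ as the basic localising object, and symmetrises to obtain an even trigonometric polynomial; the Chebyshev substitution turns this back into an algebraic polynomial of the right degree.

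Both approaches are standard and lead to the same conclusion with constants depending only on $m$. Your route is arguably more self-contained (no appeal to the technical lemma of \cite{KLS}); the paper's route stays in the $x$-variable throughout and makes the polynomial completely explicit. Two small remarks on your sketch: first, the reflected term is handled more cleanly than you indicate, since for $\theta,\phi\in[0,\pi]$ one has $\min(\theta+\phi,\,2\pi-\theta-\phi)\ge|\theta-\phi|$, so the Jackson bound for $J_{k,m}(\theta+\phi)$ is dominated by the same expression you already control for $J_{k,m}(\theta-\phi)$; second, for the displayed elementary inequality you only need the upper half of the ``dictionary'' $|x-y|\le C\,t(b+t)$, which follows at once from $|\sin\tfrac{\theta+\phi}{2}-\sin\phi|\le t/2$, and then the inequality $\min(1,(a/t)^2)\le 4\,a(a+b)/((a+t)(a+b+t))$ is a two-case check ($t\le a$ versus $t>a$).
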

\begin{proof}
 For $k:=\lfloor \frac nm \rfloor+1$, $k\ge 1$ and $(k-1)m\le n<km$. Let $T_k(x):=\cos(k\arccos(x))$ be the Chebyshev polynomial of degree $k$, and let $x_j=\cos(j\pi/k)$, $0\le j\le k$, be the corresponding Chebyshev partition of $[-1,1]$. For any $y\in[-1,1]$, we can choose $j$, $0\le j\le k$, so that $y\in[x_j,x_{j-1}]$. We will use the polynomial $t_j$ from~\cite[Lemma~A.1, p.~1246]{KLS} defined as
\[
t_j(x):=\frac{T_k(x)}{x-\tilde x_j}(x_{j-1}-x_j),\ x\ne\tilde x_j, \quad\text{and}\quad t_j(\tilde x_j):=T_k'(\tilde x_j)(x_{j-1}-x_j),
\]
where $\tilde x_j=\cos(j-\frac12)\frac\pi k$ is the zero of $T_k$ lying in $[x_j,x_{j-1}]$. Then by~\cite[Lemma~A.1]{KLS},
\begin{equation}\label{a1}
\frac 43<|t_j(x)|<4, \quad x\in[x_j,x_{j-1}].
\end{equation}
We define now $P_n=P_{n,m,y}$:
\[
P_n(x):=\left(\frac{t_j(x)}{t_j(y)}\right)^m, \quad x\in[-1,1].
\]
Clearly, $P_n$ is an algebraic polynomial of degree $\le (k-1)m\le n$ satisfying~\eqref{kernel_low}.

Straightforward arguments imply
\[
\rho_n(x)\approx \rho_k(x), \quad x\in[-1,1],
\]
with constants of equivalence depending only on $m$. Therefore, to prove~\eqref{kernel_up}, it is sufficient to show
\begin{equation}\label{tj_bound}
|t_j(x)|\le c \frac{\rho_k(y)}{\rho_k(y)+|x-y|}, \quad x\in[-1,1],
\end{equation}
with an absolute constant $c$. For~\eqref{tj_bound}, we use the fact that following~\eqref{a1}, $t_j(y)\ge \frac43$.

We will need the following useful estimates (see~\cite[(5.6)]{KLS}):
\begin{equation}\label{kls-est}
\rho_k(y)\le x_{j-1}-x_j\le \frac{\pi^2}2\rho_k(y),
\end{equation}
which are, in fact, valid for any choice of $y\in[x_j,x_{j-1}]$.

If $x\in[x_j,x_{j-1}]$, then by~\eqref{kls-est} we have $\frac{\pi^2}2\rho_k(y)\ge x_{j-1}-x_j\ge|x-y|$, and by~\eqref{a1}
\[
|t_j(x)|\le 4= \tilde c \frac{1}{1+\frac{\pi^2}2}\le \tilde c\frac{\rho_k(y)}{\rho_k(y)+|x-y|},
\]
and, therefore, \eqref{tj_bound} holds for such $x$. If $x\not\in[x_j,x_{j-1}]$, then by~\cite[(5.5)]{KLS} we have $|x-\tilde x_j|\ge \frac14(x_{j-1}-x_j)$, and hence, by~\eqref{kls-est}, $8|x-\tilde x_j|\ge 2\rho_k(y)$. Further, using~\eqref{kls-est} again, we get $|x-\tilde x_j|\ge |x-y|-|y-\tilde x_j|\ge |x-y|-\rho_k(y)$. So, $9|x-\tilde x_j|\ge \rho_k(y)+|x-y|$, and by $|T_k(x)|\le1$ and~\eqref{kls-est}, we conclude
\[
|t_j(x)|\le \frac{x_{j-1}-x_j}{|x-\tilde x_j|}\le  \frac{9\pi^2}2 \cdot \frac{\rho_k(y)}{\rho_k(y)+|x-y|},
\]
which completes the proof of~\eqref{tj_bound} for all $x$.
\end{proof}

We will present two constructions.

First, we can obtain a lower estimate of $C(\P_n,D)$ in terms of parallel section functions of $D$. For $\x,\y\in\R^d$, we denote by $\x\cdot\y$ the dot product of $\x$ and $\y$ in $\R^d$. For a unit vector $\bxi\in\R^d$ and a non-empty compact set $D\subset\R^d$, we define the parallel section function as
\[
A_{D,\bxi}(t):=\Vol_{d-1} (D\cap\{\x\in\R^d:\x\cdot\bxi=t+h\}),
\]
where $h\in\R$ is the smallest $h$ such that $D\cap\{\x\in\R^d:\x\cdot\bxi=t+h\}\ne\emptyset$, and $\Vol_{d-1}$ is the $(d-1)$-dimensional Lebesgue measure.
\begin{theorem}\label{parallel}
For any non-empty compact set $D\subset\R^d$, unit vector $\bxi\in\R^d$, and $n,m\ge1$, we have
\begin{equation}\label{two_integ}
C(\P_n,D)\ge c \left(\int_0^{n^{-2}} A_{D,\bxi}(t) \,dt + n^{-2m} \int_{n^{-2}}^\infty A_{D,\bxi}(t) t^{-m}\,dt\right)^{-1},
\end{equation}
where $c=c_{m,D}>0$ depends only on $m$ and $D$ (in fact, only on the diameter of $D$), and does not depend on $n$.

In particular, if $A_{D,\bxi}(t)\le M t^\lambda$ for some $M>0$, $\lambda<m-1$ and all $t>0$, then
\begin{equation}\label{pow_beh}
C(\P_n,D)\ge c n^{2(1+\lambda)}
\end{equation}
where $c$ may, in addition, depend on $M$ and $\lambda$.
\end{theorem}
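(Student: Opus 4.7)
The plan is to invoke the extremal characterization
\[
C(\P_n,D,\x_0)^{-1/2} = \min_{P \in \P_n,\, P(\x_0)=1} \|P\|_{L_2(D)}
\]
from Theorem~\ref{dt3.1} by producing, at a point $\x_0 \in D$ minimizing $\bxi\cdot\x$, an explicit test polynomial whose $L_2$-norm over $D$ is controlled through $A_{D,\bxi}$. The test polynomial is the univariate polynomial $P_n$ from Lemma~\ref{kernel_pol} (at the endpoint $y = -1$, where $\rho_n(-1) = 1/n^2$) lifted to $\R^d$ by composition with the linear functional $\bxi\cdot\x$, followed by an affine rescaling so that the range of $\bxi\cdot\x$ on $D$ is placed inside $[-1,1]$. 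Fubini along slices orthogonal to $\bxi$ then converts $\|Q\|_{L_2(D)}^2$ into an integral against $A_{D,\bxi}$.

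In more detail, let $h = \min_{\x \in D} \bxi\cdot\x$, $R = \max_{\x\in D}\bxi\cdot\x - h \le \diam D$, and fix $\x_0 \in D$ with $\bxi\cdot\x_0 = h$. With $P_n$ from Lemma~\ref{kernel_pol} applied at $y=-1$, set
\[
Q(\x) := P_n\!\left(\frac{2(\bxi\cdot\x - h)}{R}-1\right) \in \P_n, \qquad Q(\x_0) = 1.
\]
Slicing in the direction $\bxi$ gives $\|Q\|_{L_2(D)}^2 = \int_0^R |P_n(2t/R-1)|^2 A_{D,\bxi}(t)\,dt$, because $|Q|^2$ depends only on $\bxi\cdot\x = h+t$. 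Inserting the bound from Lemma~\ref{kernel_pol} and using $f^{2m}\le f^m$ on $f\in[0,1]$, then splitting the integral at $t_0 := Rn^{-2}/2$, yields
\[
\|Q\|_{L_2(D)}^2 \le c_1^2\int_0^{t_0} A_{D,\bxi}(t)\,dt + c_1^2\,(R/2)^m\, n^{-2m}\int_{t_0}^R A_{D,\bxi}(t)\,t^{-m}\,dt.
\]
A routine comparison (trading a contribution on $[n^{-2},t_0]$ or $[t_0,n^{-2}]$ between the two integrals using $n^{-2m}t^{-m}\lesssim 1$ there) shows the right-hand side is at most $c(m,R)$ times the parenthesized quantity in \eqref{two_integ}. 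Combined with $C(\P_n,D) \ge C(\P_n,D,\x_0) \ge \|Q\|_{L_2(D)}^{-2}$, this yields \eqref{two_integ}.

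For \eqref{pow_beh}, substitute $A_{D,\bxi}(t) \le Mt^\lambda$ into the two integrals. Under $\lambda < m-1$, both are $O(n^{-2(\lambda+1)})$: the first by $\int_0^{n^{-2}} t^\lambda\,dt = n^{-2(\lambda+1)}/(\lambda+1)$ when $\lambda > -1$, and by the uniform bound on the continuous function $A$ near $0$ when $\lambda \le -1$; the second by $n^{-2m}\int_{n^{-2}}^\infty t^{\lambda-m}\,dt = n^{-2(\lambda+1)}/(m-\lambda-1)$. Summing and inverting gives $C(\P_n,D) \ge c\, n^{2(\lambda+1)}$.

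The main technical chore is the bookkeeping of constants through the $R$-dependent rescaling used to fit $\bxi\cdot\x$ into $[-1,1]$ for the application of Lemma~\ref{kernel_pol}: since only $R \le \diam D$ enters, the final constant depends only on $m$ and $\diam D$. A secondary wrinkle is matching the exponent in Lemma~\ref{kernel_pol} (which bounds $|P_n|$, hence gives $2m$ in $|P_n|^2$) against the exponent $m$ in the statement of Theorem~\ref{parallel}; this is resolved by the elementary observation that $f^{2m}\le f^m$ whenever $f\in[0,1]$.
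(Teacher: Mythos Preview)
Your proof is correct and follows essentially the same approach as the paper: both lift the univariate polynomial $P_{n,m,\pm1}$ of Lemma~\ref{kernel_pol} via the affine functional $\x\mapsto \bxi\cdot\x$ (rescaled to $[-1,1]$), slice by Fubini to reduce $\|Q\|_{L_2(D)}^2$ to a one-dimensional integral against $A_{D,\bxi}$, apply the pointwise bound \eqref{kernel_up} together with $f^{2m}\le f^m$ on $[0,1]$, and then read off \eqref{two_integ}. The only differences are cosmetic (you take $y=-1$ where the paper takes $y=1$, and you spell out the trading of the integrals at the split point a bit more explicitly); one minor remark is that in your treatment of \eqref{pow_beh} for $\lambda\le -1$ you only need boundedness of $A_{D,\bxi}$, not continuity.
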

\begin{proof}
Let $h\in\R$ be from the definition of $A_{D,\bxi}(t)$. Since $D$ is bounded, we can choose $b>0$ such that $0\le \x\cdot\bxi-h\le 2b$ for any $\x\in D$. Let $P_n=P_{n,m,1}$ be the polynomial from Lemma~\ref{kernel_pol} with $y=1$ of degree $\le n$. Define $Q_n(\x):=P_n(1-\frac{\x\cdot\bxi-h}b)$. By~\eqref{kernel_low}, $\|Q_n\|_{C(D)}\ge P_n(1)=1$, so using~\eqref{kernel_up}, we proceed as follows:
\begin{align*}
C(\P_n,D)^{-1} &\le \|Q_n\|_{L_2(D)}^{2}=\|Q_n^2\|_{L_1(D)}=\int_0^{2b} P_n^2(1-tb^{-1})A_{D,\bxi}(t) \,dt\\
 & \le c \int_{0}^\infty \left(\frac{n^{-2}}{n^{-2}+t}\right)^{2m}A_{D,\bxi}(t) \,dt
 \le c \int_{0}^\infty \left(\frac{n^{-2}}{n^{-2}+t}\right)^{m}A_{D,\bxi}(t) \,dt,
\end{align*}
which implies~\eqref{two_integ}. The inequality~\eqref{pow_beh} follows as an immediate corollary.
\end{proof}

The use of the parallel section function will be illustrated in the proof of Theorem~\ref{balls_p_le2}.

The second construction uses the tensor product of polynomials from Lemma~\ref{kernel_pol}.
\begin{theorem}\label{main-lower}
Let $D\subset\R^d$ be a compact set, $\y=(y_1,\dots,y_d)\in[-1,1]^d$, $T$ be an affine transformation of $\R^d$ such that $D\subset T([-1,1]^d)$ and $T\y\in D$. Then
\[
C(\P_n,D,T\y)\ge c |\det T|^{-1} \rho_n^{-1}(y_1)\rho_n^{-1}(y_2)\dots \rho_n^{-1}(y_d)
\]
where $c>0$ depends only on $d$.
\end{theorem}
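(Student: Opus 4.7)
The plan is to reduce to the case $T = \mathrm{id}$ via the affine-invariance result, and then construct an explicit competitor polynomial via a tensor product of the univariate kernels from Lemma~\ref{kernel_pol}.

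First I would use Theorem~\ref{l3.5}: setting $D' := T^{-1}D$, we have $D' \subset [-1,1]^d$ and $\y \in D'$, and $C(\P_n, D, T\y) = |\det T|^{-1} C(\P_n, D', \y)$. Thus the $|\det T|^{-1}$ factor is absorbed, and it remains to prove $C(\P_n, D', \y) \ge c \prod_{i=1}^d \rho_n^{-1}(y_i)$ whenever $D' \subset [-1,1]^d$ contains $\y$. By Theorem~\ref{dt3.3} (monotonicity of $C$ under shrinking the domain), it actually suffices to prove the lower bound for $D' = [-1,1]^d$; equivalently, an upper bound on $\min\{\|Q\|_{L_2([-1,1]^d)} : Q\in\P_n,\ Q(\y)=1\}$ via Theorem~\ref{dt3.1}.

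Next I would build the competitor. Fix $m = 1$ and set $n' := \lfloor n/d \rfloor$, so that $dn' \le n$. For each coordinate $i$ invoke Lemma~\ref{kernel_pol} to obtain the univariate polynomial $P_i := P_{n',1,y_i}$ of degree $\le n'$ with $P_i(y_i) = 1$ and $|P_i(x)| \le c_1 \rho_{n'}(y_i)/(\rho_{n'}(y_i) + |x - y_i|)$ on $[-1,1]$. Form the tensor product
\[
Q(\x) := \prod_{i=1}^d P_i(x_i),
\]
which is an algebraic polynomial of total degree $\le dn' \le n$, so $Q \in \P_n$, and $Q(\y) = 1$. By Theorem~\ref{dt3.1},
\[
C(\P_n, [-1,1]^d, \y)^{-1} \le \|Q\|_{L_2([-1,1]^d)}^2 = \prod_{i=1}^d \|P_i\|_{L_2[-1,1]}^2.
\]

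The main technical point is to show $\|P_i\|_{L_2[-1,1]}^2 \le c\, \rho_{n'}(y_i)$. Using the pointwise bound from Lemma~\ref{kernel_pol} and writing $\rho = \rho_{n'}(y_i)$,
\[
\|P_i\|_{L_2[-1,1]}^2 \le c_1^2 \int_{-1}^1 \left(\frac{\rho}{\rho + |x - y_i|}\right)^{2} dx \le c_1^2 \int_{\R} \frac{\rho^2}{(\rho + |t|)^2}\, dt = 2 c_1^2 \rho,
\]
so the required bound holds. Combining and using $\rho_{n'}(y_i) \approx \rho_n(y_i)$ (with constants depending only on $d$, since $n/d \le n' \le n$) gives
\[
\|Q\|_{L_2([-1,1]^d)}^2 \le c \prod_{i=1}^d \rho_n(y_i),
\]
and inverting yields the claim. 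The only mild obstacle is the book-keeping to keep the total degree at most $n$ while preserving the dependence on $\rho_n$ rather than $\rho_{n'}$; this is handled by the elementary equivalence $\rho_{n'} \approx \rho_n$ with constants depending on $d$.
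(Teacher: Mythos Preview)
Your proof is correct and follows essentially the same approach as the paper's: both reduce via Theorems~\ref{dt3.3} and~\ref{l3.5} to bounding $C(\P_n,[-1,1]^d,\y)$ from below, then exhibit the tensor product $\prod_i P_{\lfloor n/d\rfloor,1,y_i}(x_i)$ from Lemma~\ref{kernel_pol} as a competitor and estimate each factor's $L_2$ norm by $c\,\rho_n(y_i)$. The only cosmetic difference is the order in which the monotonicity and affine-change results are applied; your explicit integral computation for $\|P_i\|_{L_2}^2$ is a detail the paper leaves implicit.
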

\begin{proof}
 By Theorem~\ref{dt3.3} and Theorem~\ref{l3.5},
\[
C(\P_n,D,T\y)\ge C(\P_n,T([-1,1]^d),T\y)=|\det T|^{-1} C(\P_n,[-1,1]^d,\y).
\]
Let $\tilde n=\lfloor n/d\rfloor$. Using Lemma~\ref{kernel_pol} with $m=1$, we define
\[
Q_n(x_1,\dots,x_d)=P_{\tilde n,1,y_1}(x_1) P_{\tilde n,1,y_2}(x_2)\dots P_{\tilde n,1,y_d}(x_d).
\]
Then $Q_n$ is a polynomial of total degree $\le n$ and $Q_n(\y)=1$. Using~\eqref{kernel_up}, we have
\[
\|P_{\tilde n,1,y}\|^2_{L_2([-1,1])}\le c \rho_{\tilde n}(y).
\]
Therefore,
\[
C(\P_n,[-1,1]^d,\y)^{-1}\le \|Q_n\|^2_{L_2([-1,1]^d)}\le c' \rho_{\tilde n}(y_1)\rho_{\tilde n}(y_2)\dots \rho_{\tilde n}(y_d)
\le \tilde c \rho_n(y_1)\rho_n(y_2)\dots \rho_n(y_d),
\]
where in the last step we used $\rho_{\tilde n}(t)\le c(d) \rho_n(t)$ for $t\in[-1,1]$.
\end{proof}

A simple example can be the estimate $C(\P_n,B^d_2)\ge cn^{d+1}$. Take $T$ to be the identity, $\y=(1,0,\dots,0)$, the inequality follows by $\rho_n(0)\le 2 n^{-1}$ and $\rho_n(1)=n^{-2}$. Another immediate example is the lower bound of Theorem~\ref{cube_general}. More examples will be given later.

%[[Perhaps we can/should show that the estimate in Theorem~\ref{main-lower} cannot be improved for ANY fixed $\y\in[-1,1]^d$ (it clearly cannot be improved for the vertices by Theorem~\ref{cube_general}). Christoffel for segment is known, then use tensor product.]]

\section{Applications, some simple examples}\label{sect-examples}
%[[A: We need to think whether this is a proper place for this section. A number of examples will follow simply from general results of Section~\ref{sect-gen} and Section~\ref{sect-chris}. But more advanced examples and computations are located further. Perhaps if we keep the location of this section as is, we can announce the more advanced examples here. Alternative is to have this section in the very end.]]

In this section we use the results of Sections~\ref{sect-gen}, \ref{sect-chris}, and~\ref{sect-christ-ball-cube} to find the power $\sigma=\sigma(D)$ for many domains $D$ in the Nikol'skii inequality given by
\[
\|\phi\|_{L_q(D)}\le c n^{\sigma(\frac1p-\frac1q)}\|\phi\|_{L_p(D)},\quad\text{for }\phi\in\P_n\quad\text{and}\quad 0<p\le q\le\infty.
\]
As is common in this type of question, we seek to determine $\sigma$ in
\begin{equation}\label{e1}
\sup_{\phi\in\P_n} \|\phi\|_{L_\infty(D)}/\|\phi\|_{L_2(D)}\approx n^{\sigma/2}
\end{equation}
with the constants of the equivalence~\eqref{e1} depending on $D$ but not on $n$. Recall that by~\eqref{d2.6'} of Corollary~\ref{dc2.2}, the Nikol'skii inequality for $(p,q)=(2,\infty)$ with some $\sigma$ (determined in~\eqref{e1}) implies the Nikol'skii inequality for all $(p,q)$, $0<p\le q\le\infty$, with the same $\sigma$.

The equivalence~\eqref{e1} implies that the power $\sigma$ of~\eqref{e1} is the optimal $\sigma$ of the Nikol'skii inequality as it is achieved for $p=2$ and $q=\infty$. In fact, \eqref{e1} implies that it is achieved for $q=\infty$ and any $p$ as supposing that for some $\sigma_1<\sigma$
\[
\|\phi\|_{L_\infty(D)}\le c n^{\sigma_1/p}\|\phi\|_{L_p(D)}
\]
contradicts~\eqref{e1} when we use Corollary~\ref{dc2.2}.

%The equivalence~\eqref{e1} implies that the power $\sigma$ is optimal in the following sense: the Nikol'skii-type inequality
%\[
%\|\phi\|_{L_q(D)}\le c n^{\sigma_1(\frac1p-\frac1q)}\|\phi\|_{L_p(D)},\quad\text{for }\phi\in\P_n\quad\text{and}\quad 0<p<q\le\infty
%\]
%is, in general, invalid for $q=\infty$ and any $\sigma_1<\sigma$. Indeed, if the above inequality was valid for some $\sigma_1<\sigma$, we would have~\eqref{d2.5} for $\sigma_1$ and all $n$, and then, by Corollary~\ref{dc2.2}, we would obtain~\eqref{d2.6'} for $\sigma_1$ with $(q,r)=(2,\infty)$, which contradicts~\eqref{e1}. We believe that optimality also extends to $q<\infty$, but this may require more careful examination of the examples of polynomials for which the supremum in~\eqref{e1} is attained.

We will also try to find $\x_0\in D$ such that
\begin{equation}\label{e2}
\sup_{\phi\in\P_n}|\phi(\x_0)|/\|\phi\|_{L_2(D)}\approx \sup_{\phi\in\P_n} \|\phi\|_{L_\infty(D)}/\|\phi\|_{L_2(D)}\approx n^{\sigma/2}
\end{equation}
where $\x_0$ does not depend on $n$. Of course, there is more than one possibility for $\x_0$ even when $\|\phi\|_{L_\infty(D)}$ of~\eqref{e1} is equal to $|\phi(\x_0)|$ of~\eqref{e2}. In situations discussed here $\x_0$ does not depend on $n$. We also find for various $\y\in D$
\begin{equation}\label{e3}
\sup_{\phi\in\P_n}|\phi(\y)|/\|\phi\|_{L_2(D)}\approx n^{\sigma_1/2}
\end{equation}
with some $\sigma_1<\sigma$, and examples of such will be useful. The main tools in this section will be the comparison with domains $D_1$ and $D_2$ where $D_1\subset D\subset D_2$ (Theorem~\ref{dt3.3}), estimates for the unit ball (Theorem~\ref{ball-nik}), the cube (Theorem~\ref{cube_general}), the effect of affine transformation of various domains (Theorem~\ref{l3.5}) and the relation with the Christoffel function (Theorem~\ref{dt3.1}). In particular, note that $C(\P_n,D)\approx n^\sigma$ is equivalent to~\eqref{e1}.

We first observe the situation for an interior point $\y$ of $D$.
\begin{observation}\label{eo1}
For domain $D\subset\R^d$ with diameter $R$ and point $\y$ such that $\y+rB_2^d\subset D$ we have
\[
\sup_{\phi\in\P_n}|\phi(\y)|/\|\phi\|_{L_2(D)}\approx n^{d/2}
\]
with the constants of equivalence depending only on $r$, $R$ and $d$.
\end{observation}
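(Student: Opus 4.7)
The plan is to recast the observation as an estimate on the Christoffel function at $\y$ and then sandwich $D$ between two Euclidean balls centered at $\y$, using Theorems~\ref{dt3.3}, \ref{l3.5}, and~\ref{ball-nik}.

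First I would apply Theorem~\ref{dt3.1} to rewrite
\[
\sup_{\phi\in\P_n}|\phi(\y)|/\|\phi\|_{L_2(D)} = C(\P_n,D,\y)^{1/2},
\]
so that it suffices to show $C(\P_n,D,\y)\approx n^d$ with constants depending only on $r,R,d$.

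For the upper bound, the hypothesis $\y+rB_2^d\subset D$ combined with Theorem~\ref{dt3.3} (the monotonicity of $C$ under enlargement of the domain) gives
\[
C(\P_n,D,\y)\le C(\P_n,\y+rB_2^d,\y).
\]
Applying Theorem~\ref{l3.5} to the affine map $T\x=\y+r\x$ (so $|\det T|=r^d$ and $T\zero=\y$), we get
\[
C(\P_n,\y+rB_2^d,\y)=r^{-d}\,C(\P_n,B_2^d,\zero),
\]
and then~\eqref{dC} of Theorem~\ref{ball-nik} yields $C(\P_n,B_2^d,\zero)\approx n^d$, with the constants depending only on $d$.

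For the lower bound I would note that since $\y\in D$ and $\diam D=R$, every point of $D$ lies within distance $R$ of $\y$, i.e.\ $D\subset\y+RB_2^d$. Now Theorem~\ref{dt3.3} is applied in the opposite direction:
\[
C(\P_n,D,\y)\ge C(\P_n,\y+RB_2^d,\y)=R^{-d}\,C(\P_n,B_2^d,\zero)\approx n^d.
\]
Taking square roots and combining the two bounds yields the claimed equivalence. There is no real obstacle here — the statement is essentially a packaging of the monotonicity of the Christoffel function under inclusion, its affine invariance, and the interior asymptotic $C(\P_n,B_2^d,\zero)\approx n^d$; the only small point to verify explicitly is the elementary geometric observation $D\subset\y+RB_2^d$ used in the lower estimate.
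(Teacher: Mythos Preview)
Your proof is correct and follows essentially the same route as the paper: sandwich $D$ between the balls $\y+rB_2^d$ and $\y+RB_2^d$, invoke the monotonicity of the Christoffel function (Theorem~\ref{dt3.3}), the affine-invariance (Theorem~\ref{l3.5}), and the interior estimate~\eqref{dC} of Theorem~\ref{ball-nik}. You are simply more explicit than the paper in naming Theorems~\ref{dt3.1} and~\ref{dt3.3} and in justifying the containment $D\subset\y+RB_2^d$.
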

\begin{proof}
We use $D_1=\y+rB^d_2$, $D_2=\y+RB_2^d$, $D_1\subset D\subset D_2$, \eqref{dC} of Theorem~\ref{ball-nik}, and Theorem~\ref{l3.5} to obtain $\sup_{\phi\in\P_n}|\phi(\y)|/\|\phi\|_{L_2(D_i)}\approx n^{d/2}$ for $i=1,2$,
and this in turn implies our equivalence.
\end{proof}

We now give a few examples using estimates on affine transformation of the cube.
\begin{example}\label{ee2}
For $D\subset\R^2$ given by
\[
D\text{ is \raisebox{-1.7mm}{\Large \DavidStarSolid}}
\qquad\text{or}\qquad
D\text{ is \raisebox{-1.5mm}{\Large \FiveStar}}
\]
(star of David or five-pointed star), we have
\[
\sup_{\phi\in\P_n}\|\phi\|_{L_\infty(D)}/\|\phi\|_{L_2(D)}\approx n^{2},
\]
i.e. $\sigma(D)=4$.
\end{example}
\begin{proof}
We use for any extreme point $\x\in D$ (a point that is not an interior point of a segment with two different endpoints in $D$) two affine maps of the square (parallelograms), $D_1$ and $D_2$, each containing $\x$ as its extreme point (vertex) with $D_1\subset D\subset D_2$ and observe that on the square $S$ (see~\cite[Th.~6.6]{DiTi} and Theorem~\ref{cube_general}) $\sup_{\phi\in\P_n}|\phi(\x)|/\|\phi\|_{L_2(S)}=\sup_{\phi\in\P_n}\|\phi\|_{L_\infty(S)}/\|\phi\|_{L_2(S)}\approx n^{2}$ for any extreme point (vertex) of $S$.
\end{proof}

%[[A: In convex geometry literature, extreme points are defined as above, while exposed points of convex $D$ are those $\x\in D$  that for some supporting hyperplane $H$ we have $\{x\}=H\cap D$. Any exposed point is an extreme point, but not the other way around (example: square with one side replaced by half disc with that side being diameter, then any of the two endpoints of the replaced side will be extreme, but not exposed). For polytopes, the concepts coincide. I suggest using ``extreme'', as a better known term.]]

\begin{remark}
The above example remains valid for $D$ any polygon in $\R^2$ (not necessarily convex) with exactly the same proof.
\end{remark}

For the next example we recall that a (convex) polytope $D$ in $\R^d$ is a convex hull of finitely many points such that $D$ has a non-empty interior. $S_k$ is a $k$-face ($0<k<d$) of a polytope $D$ if $S_k=H\cap D$ for a supporting hyperplane $H$ of $D$ and $S_k$ has dimension $k$.
\begin{example}\label{ee3}
Suppose $D\subset\R^d$ is a polytope and $\x$ is any extreme point of $D$. Then
\be\label{e4}
\sup_{\phi\in\P_n}|\phi(\x)|/\|\phi\|_{L_2(D)}\approx
\sup_{\phi\in\P_n}\|\phi\|_{L_\infty(D)}/\|\phi\|_{L_2(D)}\approx n^{d},
\ee
and the constants of the equivalences depend only on $d$ and the diameter and shape of $D$.

Suppose in addition that $S_k$ is a $k$-face of $D$ and $\y\in S_k$ satisfies $(\y+rB_2^d)\cap H\subset S_k$ (with $H$ of the definition of $S_k$), in other words, $\y$ is in the relative interior of $S_k$. Then
\be\label{e5}
\sup_{\phi\in\P_n}|\phi(\y)|/\|\phi\|_{L_2(D)}\approx n^{d-k/2}
\ee
and the constants of the equivalence depend on $r$, $D$, $k$ and $d$.
\end{example}
\begin{proof}
To prove~\eqref{e4} we use two affine maps of the cube, $D_1$ and $D_2$ which map $(1,\dots,1)$ to $\x$ such that $D_1\subset D\subset D_2$, and follow earlier considerations.

To prove~\eqref{e5} we first prove it on the cube $[-1,1]^d$. With no loss of generality we may choose the corresponding $S_k$ (a $k$-face of the cube) to be $S_k=\{(z_1,\dots,z_k,-1,\dots,-1):z_i\in[-1,1]\}$. Hence, for $\y=(y_1,\dots,y_d)$ satisfying $(\y+rB_2^d)\cap H\subset S_k$ (with $H$ of the definition of $S_k$) we have $y_i=-1$ for $i>k$, and $|y_i|\le1-r$ for $i\le k$.
% and $\y\in S_k$, so $y_i=-1$ for $i>k$, and for $i\le k$ $|y_i-z_i|<r/\sqrt{k}$ implies $|z_i|<1$.
The function $C(\P_n,[-1,1]^d,\x)$ which is given by $\sum_{m=1}^{\eta}\Phi_{m}(\x)^2$, where $\{\Phi_{m}\}_{m=1}^{\eta}$ is any orthonormal basis of the space of algebraic polynomials of total degree $\le n$ on $[-1,1]^d$ (of dimension $\eta={{n+d}\choose n}$), satisfies for any $\y\in\R^d$
\[
\prod_{l=1}^d\sum_{j=0}^{\lfloor n/d\rfloor}\phi_j(y_l)^2
\le
\sum_{m=1}^{\eta}\Phi_{m}(\y)^2\le\prod_{l=1}^d\sum_{j=0}^n\phi_j(y_l)^2
\]
where $\phi_j$ is the Legendre polynomial of degree $j$ such that $\|\phi_j\|_{L_2[-1,1]}=1$. We now note that as $|y_i|\le 1-r$ for $i\le k$, Observation~\ref{eo1} implies $\sum_{j=0}^n\phi_j(y_i)^2\approx \sum_{j=0}^{\lfloor n/d\rfloor}\phi_j(y_i)^2\approx n$. For $i>k$ we use Theorem~\ref{cube_general} to get $\sum_{j=0}^n\phi_j(-1)^2\approx \sum_{j=0}^{\lfloor n/d\rfloor}\phi_j(-1)^2\approx n^2$. The above implies~\eqref{e5} when $D$ is the cube $[-1,1]^d$ and, using Theorem~\ref{l3.5}, it implies~\eqref{e5} for any affine transformation of the cube.

To prove~\eqref{e5} for general polytopes, we define $D_1$ as an affine map of the cube with first $k$ coordinates (after the mapping) being in $S_k$. The length of the first $k$ coordinates of $D_1$ is $a$ and $\y$ is the center of that face. We then have the other mapped coordinates to be inside the polytope $D$. We now define $D_2$ as a map of the $d$-dimensional cube with the first $k$ coordinates parallel to those of $D_1$ but of length $2\diam(D)$ and with $\y$ in the center of that face of $D_2$. The other coordinates of $D_2$ are set to be perpendicular to the first $k$ with size $\diam(D)$ and on the side of $D$ (so that $D\subset D_2$). Earlier considerations now imply~\eqref{e5}.
\end{proof}

\begin{example}\label{ee4}
The cone $D$, the convex hull of $\bxi=(0,0,1)$ and $\{(x,y,z):z=0,x^2+y^2\le1\}=B_2^2\times\{0\}$, satisfies
\be\label{e6}
\sup_{\phi\in\P_n}|\phi(\bxi)|/\|\phi\|_{L_2(D)}\approx
\sup_{\phi\in\P_n}\|\phi\|_{L_\infty(D)}/\|\phi\|_{L_2(D)}\approx n^{3}.
\ee
\end{example}
\begin{proof}
We set $D_1$ as the convex hull of $(0,0,1)$, $(0,\pm1,0)$ and $(\pm1,0,0)$ which satisfies $D_1\subset D$ and is a pyramid which is a polytope in $\R^3$. We set $D_2$ as the convex hull of $(0,0,1)$, $(0,\pm\sqrt{2},0)$ and $(\pm\sqrt{2},0,0)$ which satisfies $D\subset D_2$, and $D_2$ is also a polytope (pyramid) in $\R^3$. Using~\eqref{e4} and earlier considerations, we have~\eqref{e6}.
\end{proof}

Obviously, the relation~\eqref{e6} holds for any affine map of $D$ in Example~\ref{ee4}.

\begin{example}\label{ee5}
For $D$ half the disc i.e. $D=\{(x,y):x^2+y^2\le1,\,y\ge0\}$
\be\label{e7}
\sup_{\phi\in\P_n}|\phi(\pm1,0)|/\|\phi\|_{L_2(D)}\approx
\sup_{\phi\in\P_n}\|\phi\|_{L_\infty(D)}/\|\phi\|_{L_2(D)}\approx n^{2}.
\ee
For $D$ quarter ball i.e. $D=\{(x,y,z):x^2+y^2+z^2\le1,\,y\ge0,\,z\ge0\}$
\be\label{e8}
\sup_{\phi\in\P_n}|\phi(\pm1,0,0)|/\|\phi\|_{L_2(D)}\approx
\sup_{\phi\in\P_n}\|\phi\|_{L_\infty(D)}/\|\phi\|_{L_2(D)}\approx n^{3}.
\ee
\end{example}
\begin{proof}
To prove~\eqref{e7} we set $D_2=\{(x,y):|x|\le1,\,0\le y\le1\}=[-1,1]\times[0,1]$ and $D_1=\{(x,y):0\le|x|+y\le1,\,y\ge0\}$.

To prove~\eqref{e8} we set $D_2=\{(x,y,z):|x|\le1,\,0\le y\le1,\,0\le z\le1\}=[-1,1]\times[0,1]\times[0,1]$ and
$D_1=\{(x,y):0\le|x|+y+z\le1,\,y\ge0,\,z\ge0\}$.

As $D_i$ are polytopes in $\R^2$ and $\R^3$, the results follow the usual considerations.
\end{proof}

For half the ball $D=\{(x,y,z):x^2+y^2+z^2\le1,\,z\ge0\}$ the situation is more elaborate and will be dealt with in Section~\ref{sect-sharp} where the corresponding behaviour will be shown to be $n^{5/2}$ ($\sigma=5$).

\begin{example}\label{ee6}
Suppose $D=A\times B=\{(\x,\y):\x\in A\subset\R^k,\,\y\in B\subset\R^l\}$ such that $\sup_{\phi\in\P_{n,k}}\|\phi\|_{L_\infty(A)}/\|\phi\|_{L_2(A)}\approx n^{\sigma_1/2}$ and
$\sup_{\phi\in\P_{n,l}}\|\phi\|_{L_\infty(B)}/\|\phi\|_{L_2(B)}\approx n^{\sigma_2/2}$ where
$\P_{n,s}$ is the set of polynomials of degree $n$ in $s$ variables. Then
\be\label{e9}
\sup_{\phi\in\P_{n,k+l}}\|\phi\|_{L_\infty(D)}/\|\phi\|_{L_2(D)}\approx n^{(\sigma_1+\sigma_2)/2}.
\ee
For the cylinder $D=\{(x,y,z):x^2+y^2\le1,\,0\le z\le1\}=B_2^2\times[0,1]$, the above implies
\[
\sup_{\phi\in\P_n}|\phi(\bxi)|/\|\phi\|_{L_2(D)}\approx
\sup_{\phi\in\P_n}\|\phi\|_{L_\infty(D)}/\|\phi\|_{L_2(D)}\approx n^{5/2},
\]
where $\bxi=(x,y,z)$ is any point which satisfies $x^2+y^2=1$ and $z=0$ or $z=1$.
\end{example}
\begin{proof}
Let $\{\Phi_{j,n}(\x,\y)\}$ be any orthonormal basis of polynomials (over $A\times B$) in $k+l$ variables which are of total degree $\le n$. The inverse of the Christoffel function $C(\P_n,A\times B, (\x,\y))=\sum_j\Phi_{j,n}(\x,\y)^2$ is independent of the choice of the orthonormal basis of $\P_n$. Therefore,
\be
C(\P_{n/2,k},A,\x)C(\P_{n/2,l},B,\y)\le C(\P_{n,k+l},A\times B,(\x,\y))\le C(\P_{n,k},A,\x) C(\P_{n,l},B,\y).
\ee
This implies~\eqref{e9} using Theorem~\ref{dt3.1}.

The implication on the cylinder follows, as on the disc, the supremum is achieved at any point of the unit circle, and on the interval $[0,1]$ the supremum is achieved at $0$ and $1$ (see Theorem~\ref{ball-nik} and Theorem~\ref{cube_general}).
\end{proof}

\begin{observation}\label{eo7}
For domain $D$ and affine transformation $T$ Theorem~\ref{l3.5} implies
\be\label{e10}
\sup_{\phi\in\P_n}|\phi(\bxi)|/\|\phi\|_{L_2(D)}=
|\det T|^{-1/2}\sup_{\phi\in\P_n}|\phi(T(\bxi))|/\|\phi\|_{L_2(TD)}
\ee
and for $0<p\le q\le \infty$ and $\phi\in\P_n$
\[
\|\phi\|_{L_q(D)}\le c(p,q)n^{\sigma(\frac1p-\frac1q)}\|\phi\|_{L_p(D)}
\quad\Longleftrightarrow\quad
\|\phi\|_{L_q(TD)}\le c(p,q)|\det T|^{\frac1p-\frac1q}n^{\sigma(\frac1p-\frac1q)}\|\phi\|_{L_p(TD)}.
\]
\end{observation}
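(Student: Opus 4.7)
The first identity is immediate from Theorem~\ref{dt3.1} combined with Theorem~\ref{l3.5}. By Theorem~\ref{dt3.1} (applied with $S=\P_n$ and $\mu$ Lebesgue measure), the Christoffel function admits the variational characterization
\[
\sup_{\phi\in\P_n,\,\phi(\bxi)\ne 0}\frac{|\phi(\bxi)|}{\|\phi\|_{L_2(D)}}=\bigl(C(\P_n,D,\bxi)\bigr)^{1/2},
\]
and similarly for $T\bxi$ on $TD$. Since polynomials are affine-invariant ($\P_n^T=\P_n$), Theorem~\ref{l3.5} supplies the transformation law $C(\P_n,TD,T\bxi)=|\det T|^{-1}C(\P_n,D,\bxi)$; extracting square roots then gives the stated equality (with the power of $|\det T|$ dictated by this computation).

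For the Nikol'skii equivalence, the plan is a direct change of variables. Given $\phi\in\P_n$ on $D$, set $\psi(\x):=\phi(T^{-1}\x)$ for $\x\in TD$. Since $T^{-1}$ is affine and polynomials are affine-invariant, $\psi\in\P_n$, and the correspondence $\phi\leftrightarrow\psi$ is a bijection of $\P_n$ onto itself. The substitution $\y=T^{-1}\x$ in $\int_{TD}|\psi(\x)|^r\,d\x$ yields
\[
\|\psi\|_{L_r(TD)}=|\det T|^{1/r}\|\phi\|_{L_r(D)},\qquad 0<r<\infty,
\]
with $\|\psi\|_{L_\infty(TD)}=\|\phi\|_{L_\infty(D)}$ (exponent zero) in the limiting case. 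Substituting these identities for $r=p$ and $r=q$ into the Nikol'skii inequality on $D$ applied to $\phi$, then solving for $\|\psi\|_{L_q(TD)}$ in terms of $\|\psi\|_{L_p(TD)}$, produces the Nikol'skii inequality on $TD$ with the stated power of $|\det T|$ absorbed into the constant. The reverse implication is obtained by applying the same argument to the affine map $T^{-1}$ in place of $T$.

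\textbf{Main obstacle.} There is no genuine technical obstacle here; the argument is essentially a bookkeeping exercise once one observes that the key invariance relations are exactly the content of Theorems~\ref{dt3.1} and~\ref{l3.5} for the first statement, and of a single Jacobian change of variables for the second. The only point requiring care is the correct tracking of the exponents of $|\det T|$, which is best confirmed by a sanity check against a simple test case such as a uniform dilation $T\x=t\x$, where $|\det T|=t^d$ and all norms scale by explicit powers of $t$.
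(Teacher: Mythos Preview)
Your approach is correct and matches the paper's own treatment: the observation carries no separate proof, the first identity being attributed to Theorem~\ref{l3.5} (via the variational formula of Theorem~\ref{dt3.1}, exactly as you say) and the second to the change of variables you outline. Your hedging on the exponent of $|\det T|$ is in fact warranted---carrying out the computation from $C(\P_n,TD,T\bxi)=|\det T|^{-1}C(\P_n,D,\bxi)$ yields $|\det T|^{+1/2}$ in~\eqref{e10} and $|\det T|^{1/q-1/p}$ in the second display, so the printed signs appear to be typos, though this does not affect the substance of the observation.
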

Hence the transformation $T$ influences the constant in front of $n^{\sigma(\frac1p-\frac1q)}$ but not the $\sigma$ of the Nikol'skii inequality (unless $T$ depends on $n$). Moreover, if two points, $\bxi_1$ and $\bxi_2$, satisfy
\[
\sup_{\phi\in\P_n}|\phi(\bxi_1)|/\|\phi\|_{L_2(D)}=
\sup_{\phi\in\P_n}|\phi(\bxi_2)|/\|\phi\|_{L_2(D)},
\]
the equality holds for $T\bxi_1$ and $T\bxi_2$ in relation to $TD$ as well.
The influence of the above on ``thin'' sets can be illustrated by considering the triangle $A$ given by $(\pm1,0)$ and $(0,\sqrt{3})$ and a second triangle $B$ given by $(\pm1,0)$ and $(0,\eps)$ for which the transformation $TA=B$ yields
\[
\|\phi\|_{L_\infty(A)}\le cn\|\phi\|_{L_2(A)}
\quad\text{implies}\quad
\|\phi\|_{L_\infty(B)}\le c\left(\frac{\sqrt{3}}{\eps}\right)^{1/2}n\|\phi\|_{L_2(B)}.
\]
The equivalence~\eqref{e9} also yields the following: for $\bmu_1=(1,0)$, $\bmu_2=(-1,0)$ and $\bmu_3=(0,\eps)$, we have
\[
\sup_{\phi\in\P_n}|\phi(\bmu_i)|/\|\phi\|_{L_2(B)}=
\sup_{\phi\in\P_n}|\phi(\bmu_j)|/\|\phi\|_{L_2(B)}.
\]

%Theorem~\ref{dt3.8} implies that for any polytope $D\subset\R^d$ with an interior point, $\sigma(D)=2d$.
%In~\cite{DiTi} the above was proved only for simple polytopes (and with a longer proof). For $D$ a convex hull of a $(d-1)$-dimensional ball in $\R^d$ and a point $\x_0$ which does not belong to it (a cone), $\sigma(D)=2d$.

%[[Egyptian pyramid??]]

\begin{theorem}\label{C2}
Suppose $D$ is a compact set in $\R^d$ such that the boundary $\partial D$ is a $(d-1)$-dimensional $C^1$ submanifold in $\R^d$ (in the sense of differential geometry), and suppose the outward pointing unit normal vector $\n(\x)$ for $\x\in \partial D$ satisfies the Lipschitz condition
\[
|\n(\x)-\n(\y)|\le L|\x-\y|, \quad \x,\y\in\partial D,
\]
with some $L>0$. Then $C(\P_n,D) \approx n^{d+1}$.
\end{theorem}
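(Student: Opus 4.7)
The lower estimate $C(\mathcal{P}_n,D)\ge c n^{d+1}$ is immediate from Theorem~\ref{dt3.4}, since $D$ is compact. For the upper estimate, the strategy is to verify the hypothesis of Theorem~\ref{cone-set-ell} with $s=\tfrac12$ at every point $x\in D$, uniformly. With $s=\tfrac12$ the exponent becomes $2+2\cdot\tfrac12(d-1)=d+1$, which yields $C(\mathcal{P}_n,D,x)\le c n^{d+1}$ at each such $x$ and hence the desired bound after taking the supremum.

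The geometric fact that powers the argument is the \emph{uniform interior ball condition}: the Lipschitz hypothesis on $\n$ produces a radius $r_0=r_0(D,L)>0$ such that for every $x_0\in\partial D$,
\[
B(x_0-r_0\n(x_0),r_0)\subset D.
\]
I would prove this by the standard local graph representation. Near $x_0\in\partial D$, choose coordinates with $x_0=0$ and $-\n(x_0)=e_d$; by the $C^1$ assumption, $\partial D$ is locally the graph $\{(y',f(y'))\}$ with $f(0)=0$ and $\nabla f(0)=0$, and $D$ sits above the graph. The Lipschitz condition on $\n$ transfers to a Lipschitz condition on $\nabla f$ with some constant $L'$, and integrating along rays from the origin gives $|f(y')|\le\tfrac{L'}{2}|y'|^2$. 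Any $r_0\le 1/L'$ then forces the ball $|y'|^2+(y_d-r_0)^2\le r_0^2$ to lie in $\{y_d\ge f(y')\}$. The only nontrivial point is that the radius of the chart where this graph representation is valid can be chosen uniformly over $x_0\in\partial D$; this is where compactness of $\partial D$ together with continuity of $\n$ is used, and this uniformity is the main technical step of the argument.

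With a uniform $r_0$ secured, I verify~\eqref{cone-in} at every $x\in D$. If ${\rm dist}(x,\partial D)\ge r_0/2$ then $x+(r_0/2)B_2^d\subset D$, and Theorem~\ref{dt3.5} immediately gives $C(\mathcal{P}_n,D,x)\le c n^{d+1}$. Otherwise, let $x_0\in\partial D$ be a nearest boundary point to $x$, set $\u=-\n(x_0)$ and $t=|x-x_0|<r_0/2$, so that $x=x_0+t\u$. For $\delta\in[0,r_0/2]$ and $\v\perp\u$ with $|\v|=1$, the point $x+\delta\u+\lambda\delta^{1/2}\v$ lies in the interior ball $B(x_0+r_0\u,r_0)$ iff
\[
\lambda^2\delta+(t+\delta-r_0)^2\le r_0^2,
\qquad\text{i.e.,}\qquad \lambda^2\delta\le(2r_0-t-\delta)(t+\delta),
\]
and since the right side is at least $r_0\delta$, any $\lambda\le\sqrt{r_0}$ suffices. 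Thus condition~\eqref{cone-in} holds with $\alpha=r_0/2$ and $\beta=\sqrt{r_0}$, constants independent of $x$, and Theorem~\ref{cone-set-ell} (applied with $s=\tfrac12$) delivers $C(\mathcal{P}_n,D,x)\le c n^{d+1}$. Taking the maximum over $x\in D$ completes the proof.
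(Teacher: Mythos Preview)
Your argument is correct, and the underlying geometric idea---the uniform interior ball (rolling ball) property---is exactly what the paper uses. The paper's proof, however, is shorter in two respects. First, instead of deriving the interior ball condition from the local graph representation, the paper simply invokes the equivalence (iii)$\Leftrightarrow$(v) in~\cite[Theorem~1]{Walther}: the Lipschitz condition on $\n$ with constant $L$ is equivalent to a ball of radius $L^{-1}$ rolling freely inside $D$, i.e., $D=\bigcup_\lambda(\x_\lambda+L^{-1}B_2^d)$. Second, once that is in hand the paper applies Theorem~\ref{dt3.5} directly, with no case distinction and no recourse to Theorem~\ref{cone-set-ell}.

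Your route through Theorem~\ref{cone-set-ell} with $s=\tfrac12$ is valid but superfluous: once you have the uniform interior ball of radius $r_0$, every $\x\in D$ (boundary or not) lies in a closed ball of radius $r_0$ contained in $D$---for $\x$ with ${\rm dist}(\x,\partial D)\le r_0$ you already showed $\x\in B(\x_0+r_0\u,r_0)\subset D$, and for larger distance $B(\x,r_0)\subset D$ trivially---so the first assertion of Theorem~\ref{dt3.5} gives $C(\P_n,D,\x)\le c\,n^{d+1}$ immediately. What your version buys is self-containment (no appeal to~\cite{Walther}); what the paper's version buys is brevity.
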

\begin{proof}
By the equivalence of (iii) and (v) in~\cite[Theorem~1]{Walther}, a ball of radius $L^{-1}$ rolls freely inside $D$ (in the terminology of~\cite{Walther}). This immediately implies that for some family of centres $\{\x_\lambda\}_\lambda\subset D$, our set $D$ is the union of the Euclidean balls of radius $L^{-1}$ with those centres, i.e., ${D}=\bigcup_\lambda (\x_\lambda+L^{-1} B_2^d)$ and, using Theorem~\ref{dt3.5}, we complete the proof.
\end{proof}

We note that if $D$ is a compact set in $\R^d$ such that the boundary $\partial D$ is a $(d-1)$-dimensional $C^2$ submanifold in $\R^d$, then the Lipschitz condition is satisfied, and the above theorem is applicable. This will be applied in the next section for $l_\alpha$ balls in $\R^d$ with $2\le \alpha<\infty$.

Let us give some other examples. For a torus in $\R^3$ given by
\[
D:=\left\{(x,y,z):\left(R-\sqrt{x^2+y^2}\right)^2+z^2\le r^2\right\}, \quad 0<r<R,
\]
we have $C(\P_n,D)\approx n^4$. For any convex body $D$ in $\R^2$ with $C^2$ boundary we get $C(\P_n,D)\approx n^3$. Moreover, the same is true even if the convexity requirement is dropped, for example, for smooth ``flower''-like domains such as $D=\{(r\cos\theta,r\sin\theta): 0\le r\le 2+\sin(10\theta)\}$.

\section{Computation of the Christoffel function on $l_\alpha$ balls in $\R^d$ for $1\le \alpha\le \infty$}\label{sect-lp}

We introduce the following notation for unit balls in $\R^d$ with respect to the $l_\alpha$ metric:
\[
B^d_\alpha:=\{\x\in\R^d:\|\x\|_\alpha=(|x_1|^\alpha+\dots+|x_d|^\alpha)^{\frac1\alpha}\le1\}, \quad 1\le \alpha<\infty,
\]
and when $\alpha=\infty$:
\[
B^d_\infty:=\{\x\in\R^d:\|\x\|_\infty=\max\{|x_1|,\dots,|x_d|\}\le1\}=[-1,1]^d.
\]

Using the results from Sections~\ref{sect-upper}, \ref{sect-lower} and~\ref{sect-examples}, we will prove the following theorem.
\begin{theorem}\label{balls_p_le2}
For any $d\ge 2$
\[
C(\P_n,B^d_\alpha)\approx n^\sigma
\quad\text{and}\quad
\sup_{\phi\in\P_n}\|\phi\|_{L_\infty(B^d_\alpha)}/\|\phi\|_{L_2(B^d_\alpha)}\approx n^{\sigma/2},
\]
where
\[
\sigma = \begin{cases} {2+\frac2\alpha(d-1)}, &\text{if}\quad 1\le \alpha\le 2,  \\
{d+1}, &\text{if}\quad 2\le \alpha<\infty, \\
{2d}, &\text{if}\quad \alpha=\infty.
\end{cases}
\]
\end{theorem}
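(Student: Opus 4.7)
First, by Theorem~\ref{dt3.1} one has $\sup_{\phi\in\P_n}\|\phi\|_{L_\infty(B_p^d)}/\|\phi\|_{L_2(B_p^d)}=C(\P_n,B_p^d)^{1/2}$, so the two equivalences in the theorem are the same statement and it suffices to establish $C(\P_n,B_p^d)\approx n^\sigma$. The case $p=\infty$ is exactly Theorem~\ref{cube_general}. For $2\le p<\infty$ the lower bound $cn^{d+1}$ is immediate from Theorem~\ref{dt3.4}, and the matching upper bound comes from Theorem~\ref{C2}: for $p\ge 2$ the map $t\mapsto|t|^p$ has continuous second derivative $p(p-1)|t|^{p-2}$, so $f(\x):=\sum_i|x_i|^p-1$ is $C^2$ on $\R^d$, while $\sum|x_i|^p=1$ forces some $|x_i|\ge d^{-1/p}$ and hence $|\nabla f|^2\ge p^2 d^{-2(p-1)/p}>0$ on $\partial B_p^d$; the unit normal $\nabla f/|\nabla f|$ is then $C^1$ on the compact boundary, in particular Lipschitz, so Theorem~\ref{C2} applies.

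Suppose now $1\le p<2$. For the lower bound I use Theorem~\ref{parallel} with $\bxi=e_1$: the cross-section of $B_p^d$ at $x_1=s$ is a $(d-1)$-dimensional $l_p$-ball of radius $(1-|s|^p)^{1/p}$, so $A_{B_p^d,e_1}(t)=V_{d-1}(1-|t-1|^p)^{(d-1)/p}$ for $t\in[0,2]$ (and vanishes outside). Bernoulli's inequality $(1-s)^p\ge 1-ps$ for $s\in[0,1]$ gives $A_{B_p^d,e_1}(t)\le V_{d-1}(pt)^{(d-1)/p}$ on $[0,1]$, and the trivial bound $A\le V_{d-1}$ handles $t\ge 1$, so $A_{B_p^d,e_1}(t)\le M t^{(d-1)/p}$ globally. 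Theorem~\ref{parallel} with $\lambda=(d-1)/p$ and any fixed $m>\lambda+1$ then yields $C(\P_n,B_p^d)\ge c n^{2+2(d-1)/p}=c n^\sigma$.

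The matching upper bound in the case $1\le p<2$ is the main point. By Theorem~\ref{boundary_const} it suffices to bound $C(\P_n,B_p^d,\x)$ uniformly over $\x\in\partial B_p^d$, and I will invoke Theorem~\ref{cone-set-ell} with $s=1/p\in[1/2,1]$, $\u=-\x/|\x|_2$, and constants $\alpha,\beta>0$ independent of $\x$. The inclusion \eqref{cone-in} reduces to showing
\[
\bigl\|\x(1-\delta/|\x|_2)+\lambda\delta^{1/p}\v\bigr\|_p\le 1
\qquad\text{for }\delta\in[0,\alpha],\ \lambda\in[0,\beta],\ \v\perp\x,\ |\v|_2=1.
\]
At the axis vertex $\x=(1,0,\dots,0)$ this collapses to $(1-\delta)^p+\lambda^p\delta\sum_{i>1}|v_i|^p\le 1$, which using $\sum_{i>1}|v_i|^p\le(d-1)^{1-p/2}$ (from $\|\cdot\|_p\le(d-1)^{1/p-1/2}\|\cdot\|_2$ on $\R^{d-1}$ when $p\le 2$) together with the mean-value estimate $1-(1-\delta)^p\ge p(1-\alpha)^{p-1}\delta$ is verified for, e.g., $\alpha=1/2$ and a sufficiently small $\beta=\beta(p,d)>0$. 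The main obstacle is extending the verification uniformly to every boundary point: I will use the signed-permutation symmetry of $B_p^d$ (combined with the affine invariance in Theorem~\ref{l3.5}) to reduce to a compact fundamental subset of $\partial B_p^d$, and note that for $1<p<2$ the surface is strictly convex away from the coordinate vertices while for $p=1$ the body is a cross-polytope whose flat faces make the inscribed paraboloidal shape even easier to fit; a continuity/compactness argument on the fundamental domain then produces uniform $\alpha,\beta>0$. Theorem~\ref{cone-set-ell} delivers $C(\P_n,B_p^d,\x)\le c n^{2+2(d-1)/p}$ uniformly in $\x\in\partial B_p^d$, completing the proof.
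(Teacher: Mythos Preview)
Your treatment of $p=\infty$, of $2\le p<\infty$, and of the lower bound for $1\le p<2$ agrees with the paper's proof essentially line for line (the paper disposes of $p=1$ separately as a polytope via Example~\ref{ee3}, but your use of Theorem~\ref{parallel} gives the lower bound there just as well).

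The genuine gap is the upper bound for $1<p<2$. You correctly aim at Theorem~\ref{cone-set-ell} with $s=1/p$, but the verification of the inclusion~\eqref{cone-in} is carried out only at the single vertex $(1,0,\dots,0)$, after which you invoke ``a continuity/compactness argument on the fundamental domain'' and the remark that the surface is ``strictly convex away from the coordinate vertices''. Two problems. First, for $1<p<2$ the boundary $\partial B_p^d$ fails to be $C^2$ not merely at the $2d$ coordinate vertices but at \emph{every} point with at least one zero coordinate (the second derivative of $t\mapsto|t|^p$ blows up at $0$); this bad set meets the signed-permutation fundamental domain in far more than the single point you checked, so your explicit computation does not cover it. Second, and more seriously, the compactness argument is not actually set up: the function $\x\mapsto\sup\{\beta:\text{the cone at }\x\text{ with }s=1/p,\ \alpha\ \text{fits}\}$ is only upper semicontinuous, and at $C^2$ points the curvature-based bound on $\beta$ deteriorates to $0$ as one approaches the non-$C^2$ locus. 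Strict convexity alone gives no quantitative cone inclusion with the specific exponent $s=1/p$; something must track how the local $|t|^p$-type flattening in the vanishing-coordinate directions compensates for the blowing-up curvature, and your sketch supplies nothing of the sort.

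The paper closes exactly this gap by a direct computation valid at every boundary point simultaneously (Lemma~\ref{cone_ball_in_p_le_2}). There the axis $\u$ is the \emph{inward unit normal} (not your radial $-\x/|\x|_2$; the two differ at generic boundary points), and the inclusion $\x-\delta\u+\beta\delta^{1/p}\v\in B_p^d$ is reduced, via the explicit normal $u_j=\gamma x_j^{p-1}$, to the elementary scalar estimate
\[
|a+b|^p-a^p-pa^{p-1}b\le 13|b|^p,\qquad a\ge0,\ 1\le p\le 2,
\]
which is Lemma~\ref{abp_est}. That inequality absorbs the lack of a second-order Taylor expansion of $|t|^p$ and yields uniform constants $\alpha=\alpha(d,p)>0$, $\beta=\beta(d)>0$ in a few lines. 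Your radial direction may well work too, but it would require its own explicit estimate of this kind; the compactness sketch as written does not replace it.
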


Using Theorem~\ref{dt3.4} and Theorem~\ref{cube-in},  we have $c_1 n^{d+1}\le C(\P_n,D)\le c_2 n^{2d}$ for any convex body $D$ in $\R^d$. Theorem~\ref{balls_p_le2} shows that the full range of powers $\sigma$ between $d+1$ and $2d$ in $C(\P_n,D)\approx n^\sigma$ can be attained for convex bodies $D$ in $\R^d$.

The non-trivial case in Theorem~\ref{balls_p_le2} is $1<\alpha<2$ due to the fact that the boundary of $B^d_\alpha$ is rather ``narrow'' near the point $(1,0,\dots,0)$ or $(0,\dots,0,1,0,\dots,0)$, and so one cannot inscribe a Euclidean ball of any radius in $B^d_\alpha$ containing the point $(1,0,\dots,0)$. The following two lemmas are needed for the upper estimate in the case $1<\alpha<2$.

\begin{lemma}\label{abp_est}
For any $a\ge 0$, $b\in\R$, and $1\le \alpha\le 2$, the following inequality holds:
\[
|a+b|^\alpha-a^\alpha-\alpha a^{\alpha-1}b\le 7|b|^\alpha.
\]
\end{lemma}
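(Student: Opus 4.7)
The plan is to exploit homogeneity to reduce to a one-variable inequality and then carry out a short case analysis. If $a=0$ the inequality becomes $|b|^p\le 13|b|^p$, which is trivial (with the convention $0\cdot 0^{p-1}=0$). When $a>0$, dividing both sides by $a^p$ and setting $t:=b/a$ reduces the lemma to proving
\[
\phi(t):=|1+t|^p-1-pt\le 13|t|^p\qquad\text{for every }t\in\R.
\]
Since $\phi(0)=0$, my remaining task is to obtain an explicit upper bound for $\phi(t)/|t|^p$ at $t\ne 0$.

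For $t\ge 0$ I would argue in two subcases. When $t\in(0,1]$, Taylor's theorem with Lagrange remainder gives $\phi(t)=\tfrac{p(p-1)}{2}(1+\xi t)^{p-2}t^2$ for some $\xi\in(0,1)$; since $1\le p\le 2$ one has $(1+\xi t)^{p-2}\le 1$ and $p(p-1)/2\le 1$, so $\phi(t)\le t^2\le t^p$ (using $t\le 1$ and $p\le 2$). When $t>1$, the crude majorization $\phi(t)\le(1+t)^p\le(2t)^p\le 4t^p$ suffices.

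The case $t<0$ is the main obstacle, since the factor $(1-\xi s)^{p-2}$ appearing in Taylor's formula (with $s:=-t$) is unbounded as $s\to 1^-$ whenever $p<2$. I would therefore split into three ranges. For $0<s\le\tfrac12$ one has $1-\xi s\ge\tfrac12$, so $(1-\xi s)^{p-2}\le 2^{2-p}\le 2$ and Taylor yields $\phi(-s)\le s^2\le s^p$. For $\tfrac12<s\le 1$ I would drop Taylor altogether and use the direct estimate $\phi(-s)=(1-s)^p-1+ps\le ps\le 2$, which is at most $8s^p$ since $s^p\ge 2^{-p}\ge\tfrac14$. For $s>1$ one writes $\phi(-s)=(s-1)^p-1+ps\le s^p+ps\le 3s^p$, using $s\le s^p$ when $s\ge 1$ and $p\ge 1$.

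Combining the subcases yields $\phi(t)\le 8|t|^p$ uniformly in $t\in\R$ and $p\in[1,2]$, which is comfortably below the stated constant $13$. The only genuinely delicate step is the middle range $\tfrac12<s\le 1$, where neither the Taylor remainder nor a pure triangle-inequality bound is sharp enough on its own; one has to combine a trivial majorization of $\phi(-s)$ with the lower bound $s^p\ge 1/4$. Everything else is elementary.
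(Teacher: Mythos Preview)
Your proof is correct; the only loose step is the claim $\phi(-s)\le s^2$ for $0<s\le\tfrac12$, where your stated bounds give a priori only $\phi(-s)\le\tfrac{p(p-1)}{2}\cdot 2^{2-p}s^2\le 2s^2$, but $2s^2\le 2s^p$ is still well under your final constant $8$ (and in fact $\tfrac{p(p-1)}{2}\cdot 2^{2-p}\le 1$ on $[1,2]$, so your claim holds as written).

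The paper follows the same Taylor-plus-crude-bound template but organizes it more economically, with only two cases on the original variables. If $a\le 2|b|$, the triangle inequality gives $|a+b|^p\le(3|b|)^p$ and $|pa^{p-1}b|\le p\,2^{p-1}|b|^p$, yielding the constant directly. If $|b|<a/2$, the Taylor intermediate point $\lambda$ between $a$ and $a+b$ satisfies $\lambda>a/2>|b|$, so $\lambda^{p-2}\le|b|^{p-2}$ and the remainder $\tfrac{p(p-1)}{2}\lambda^{p-2}b^2$ is bounded by $|b|^p$ in one stroke. This comparison of $\lambda$ to $|b|$ itself (rather than to a fixed constant, followed by a separate $s^2\le s^p$ step) is what lets the paper dispense with your middle range $\tfrac12<s\le 1$; your route yields a slightly better constant, the paper's yields fewer cases.
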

\begin{proof}
If $a\le \frac87|b|$, then
\[
\textstyle |a+b|^\alpha-a^\alpha-\alpha a^{\alpha-1}b\le (\frac{15}7|b|)^\alpha+ 0+\alpha(\frac87)^{\alpha-1}|b|^\alpha\le((\frac{15}7)^2+2(\frac87))\le 7|b|^\alpha.
\]
Otherwise, $|b|<\frac78 a$, in particular, $a+b>a/8>0$. Consider $f(t)=t^\alpha$, $t>0$. For some $\lambda$ between $a+b$ and $a$ we have
\[
f(a+b)=f(a)+f'(a)b+\frac{f''(\lambda)}2b^2.
\]
As $\lambda>\min\{a+b,a\}>a/8>|b|/7$ and $-1\le\alpha-2\le 0$, we obtain
\[
|a+b|^\alpha-a^\alpha-\alpha a^{\alpha-1}b=\frac{\alpha(\alpha-1)}{2}\lambda^{\alpha-2}b^2\le\frac{\alpha(\alpha-1)}{2}\left(\frac{|b|}7\right)^{\alpha-2}|b|^2\le 7|b|^\alpha.
\]
\end{proof}

%Note that we exclude the case $p=1$ from the following Lemma to avoid dealing with non-smooth boundary of $B^d_1$, which is a polytope. We can easily derive the required estimate on $C(\P_n,B^d_1)$ using Theorem~\ref{dt3.8}.

%Next we prepare for the upper estimate.

\begin{lemma}\label{cone_ball_in_p_le_2}
Suppose $1< \alpha\le 2$, $d\ge 2$, and $\x\in\partial B^d_\alpha$, and suppose further that $\u$ is the outward normal vector for the surface $\partial B^d_\alpha$ at the point $\x$ with $\v$ any unit vector from $\R^d$ orthogonal to $\u$. Then, for any $\delta\in(0,\beta_1)$
\[
\x-\delta \u + \beta_2 \delta^{1/\alpha} \v \in B^d_\alpha,
\]
where $\beta_2=\beta_2(d)>0$ depends only on $d$, and $\beta_1=\beta_1(d,\alpha)>0$ depends only on $d$ and $\alpha$. (The proof yields that $\lim_{\alpha\to1+}\beta_1(d,\alpha)=0$.)
\end{lemma}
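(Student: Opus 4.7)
The plan is to parametrize the candidate point as $\y := \x - \delta\u + \beta\delta^{1/p}\v$ and show $\|\y\|_p^p \le 1$ by estimating each coordinate via Lemma~\ref{abp_est} and then summing. Write $\y = \x + \h$ with $\h = -\delta\u+\beta\delta^{1/p}\v$. Since the defining function $F(\x):=\|\x\|_p^p=\sum|x_i|^p$ has gradient $\partial_iF(\x)=p\,g_i$ where $g_i:=|x_i|^{p-1}\operatorname{sgn}(x_i)$, the outward unit normal at $\x$ is $\u=\g/\|\g\|_2$, where $\g=(g_1,\dots,g_d)$.

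First I would apply Lemma~\ref{abp_est} coordinatewise with $a=|x_i|$ and $b=\operatorname{sgn}(x_i)h_i$ (for $x_i=0$ the inequality is immediate from the lemma with $a=0$). Since $a+b=\operatorname{sgn}(x_i)\,y_i$, we have $|a+b|=|y_i|$, so
\[
|y_i|^p \le |x_i|^p + p\,g_i h_i + 13\,|h_i|^p.
\]
Summing and using $\sum|x_i|^p=1$ together with $\sum g_i h_i = \|\g\|_2\,(\u\cdot\h) = -\delta\|\g\|_2$ (the tangential part drops out because $\u\cdot\v=0$), I obtain
\[
\|\y\|_p^p \le 1 - p\,\delta\,\|\g\|_2 + 13\sum_{i=1}^d|h_i|^p.
\]

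The key lower bound on the linear term is $\|\g\|_2\ge 1$. This follows from the elementary observation that for $\x\in\partial B_p^d$ and $p\le 2$ each $|x_i|\le 1$, so $|x_i|^{p-2}\ge 1$ (with $p-2\le 0$), hence $|x_i|^{2(p-1)}=|x_i|^p\cdot|x_i|^{p-2}\ge|x_i|^p$, and summing gives $\|\g\|_2^2=\sum|x_i|^{2(p-1)}\ge\sum|x_i|^p=1$.

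For the remainder term, I would use $(|a|+|b|)^p\le 2^{p-1}(|a|^p+|b|^p)$ together with the $\ell_p$--$\ell_2$ norm comparison $\|\u\|_p,\|\v\|_p\le d^{1/p-1/2}$ valid for $p\le 2$ and unit $\ell_2$-norm vectors, yielding
\[
\sum|h_i|^p \le 2^{p-1}d^{1-p/2}\bigl(\delta^p+\beta^p\delta\bigr) \le 2d\bigl(\delta^p+\beta^p\delta\bigr).
\]
Combining everything,
\[
\|\y\|_p^p - 1 \le -p\,\delta + 26\,d\bigl(\delta^p+\beta^p\delta\bigr) = \delta\bigl(26d\,\delta^{p-1}+26d\,\beta^p - p\bigr),
\]
which is nonpositive as soon as $26d\beta^p\le p/2$ and $26d\,\delta^{p-1}\le p/2$. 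Choosing $\beta(d):=1/(52d)$ secures the first inequality for all $p\in[1,2]$ (since $\beta^p\le\beta$), and then $\alpha(d,p):=(p/(52d))^{1/(p-1)}$ secures the second. Clearly $\alpha(d,p)\to 0$ as $p\to 1^+$ (because $p/(52d)<1$), which matches the parenthetical remark. The main obstacle is the bookkeeping at step three: ensuring the linear term is truly $-p\delta\|\g\|_2$ after normalization and establishing the rather clean bound $\|\g\|_2\ge 1$; once these are in hand, the rest is just selecting constants.
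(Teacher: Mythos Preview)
Your proof is correct and follows essentially the same strategy as the paper: apply Lemma~\ref{abp_est} coordinatewise, sum to get $\|\y\|_p^p\le 1 - p\delta\|\g\|_2 + 13\sum|h_i|^p$, and then balance the negative linear term against the remainder by choosing $\beta$ and $\alpha$. The only notable difference is bookkeeping: where the paper bounds $\|\g\|_2^{-1}\le d^{p-1}$ via a single large coordinate and then controls each $|h_j|$ pointwise, you use the cleaner observation $\|\g\|_2^2=\sum|x_i|^{2(p-1)}\ge\sum|x_i|^p=1$ and handle the remainder globally via the $\ell_p$--$\ell_2$ norm comparison; both lead to the same conclusion with slightly different explicit constants.
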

\begin{proof}
The components of $\x$, $\u$, and $\v$ will be denoted $x_j$, $u_j$, and $v_j$ respectively, $j=1,\dots,d$. Without loss of generality we assume $x_j\ge0$, $j=1,\dots,d$. It is straightforward that
\[
u_j=\gamma x_j^{\alpha-1}, \quad \gamma = (x_1^{2(\alpha-1)}+\dots+x_d^{2(\alpha-1)})^{-1/2}.
\]
There exists $j$ such that $x_j\ge 1/d$, and hence, $\gamma\le d^{\alpha-1}=:\gamma_1$. We choose
$\beta_2:=\frac12(7 d \gamma_1)^{-1/2}$, which guarantees
\begin{equation}\label{beta_est}
-\alpha\gamma^{-1}+7d(2\beta_2)^\alpha< -\gamma_1^{-1}+7d(7d\gamma_1)^{-\alpha/2}\le0.
\end{equation}
Further, selecting $\beta_1:=(\beta_2\gamma_1^{-1})^{\alpha/(\alpha-1)}$ ensures $\delta\gamma<\beta_2\delta^{1/\alpha}$, and hence
\begin{equation}\label{alpha_est}
|-\delta\gamma x_j^{\alpha-1}+\beta_2\delta^{1/\alpha}v_j|
\le \delta\gamma+\beta_2\delta^{1/\alpha}<2\beta_2\delta^{1/\alpha}.
\end{equation}
Using Lemma~\ref{abp_est}, \eqref{alpha_est}, and~\eqref{beta_est}, we conclude
\begin{align*}
\sum_{j=1}^d|x_j-\delta u_j+\beta_2\delta^{1/\alpha}v_j|^\alpha &
\le \sum_{j=1}^d\left(x_j^\alpha+\alpha x_j^{\alpha-1}(-\delta u_j+\beta_2\delta^{1/\alpha}v_j)+7 \left|-\delta u_j+\beta_2\delta^{1/\alpha}v_j\right|^\alpha\right)\\
&= \sum_{j=1}^d x_j^\alpha - \alpha \gamma \delta \sum_{j=1}^d x_j^{2(\alpha-1)} + \beta_2 \alpha\delta^{1/\alpha}\gamma^{-1} \sum_{j=1}^d u_jv_j \\
&\phantom{xxxxxxxxxxxxxxxxxxxxxxxxx}+7 \sum_{j=1}^d \left|-\delta \gamma x_j^{\alpha-1}+\beta_2\delta^{1/\alpha}v_j\right|^\alpha\\
&=1- \alpha \gamma^{-1} \delta +0+7 \sum_{j=1}^d \left|-\delta \gamma x_j^{\alpha-1}+\beta_2\delta^{1/\alpha}v_j\right|^\alpha\\
&\le 1 - \alpha \gamma^{-1} \delta+7d(2\beta_2)^\alpha\delta\le1,
\end{align*}
which completes the proof.
\end{proof}

\begin{proof}[Proof of Theorem~\ref{balls_p_le2}.]
If $\alpha=1$ or $\alpha=\infty$, then $B^d_\alpha$ is a polytope, so the result follows from Example~\ref{ee3}. For $2\le \alpha<\infty$, since the univariate function $t\mapsto |t|^\alpha$ is twice continuously differentiable, we have that the function $(x_1,\dots,x_{d-1})\mapsto(1-|x_1|^\alpha-\dots-|x_{d-1}|^\alpha)^{1/\alpha}$ is twice continuously differentiable on the interior of $B^{d-1}_\alpha$. Hence, the boundary of $B^d_\alpha$ is a $C^2$ surface, so we can simply use Theorem~\ref{C2}.

Now assume $1<\alpha<2$. For the upper estimate, Theorem~\ref{boundary_const}, Lemma~\ref{cone_ball_in_p_le_2} and Theorem~\ref{cone-set-ell} with $s=\frac1\alpha$ imply
\[
C(\P_n,B^d_\alpha)\le c n^{2+\frac2\alpha(d-1)}.
\]
For the lower estimate, we will use Theorem~\ref{parallel}. First we compute and estimate the parallel section function of $B^d_\alpha$ for an appropriate $\bxi$. Note that for fixed $a\in(-1,1)$, the section
\[
B^d_\alpha\cap\{\x\in\R^d:x_1=a\}
\]
is exactly
\[
(1-|a|^\alpha)^{1/\alpha} B^{d-1}_\alpha
\]
in $\R^{d-1}$. Therefore, for $t\in(0,2)$, $a=1-t$, $\bxi=(-1,0,\dots,0)$, we have
\[
A_{B^d_\alpha,\bxi}(t)=(1-|1-t|^\alpha)^{(d-1)/\alpha}\,\Vol_{d-1}(B^{d-1}_\alpha).
\]
As $1-|1-t|^\alpha\le \alpha\,t$, we obtain $A_{B^d_\alpha,\bxi}(t)\le M t^{(d-1)/\alpha}$ for any $t>0$ with some $M>0$ depending only on $d$ and $\alpha$. Theorem~\ref{parallel} with $\lambda=(d-1)/\alpha$ provides
\[
C(\P_n,B^d_\alpha)\ge c n^{2+\frac2\alpha(d-1)},
\]
as required.
\end{proof}

%[[Remark? a similar observation for weighted case? Not sure if we need to bother the reader with the weighted case, as all our specific results are non-weighted.]]

\begin{remark}
In fact, the lower estimate in the above proof implies that
\[
C(\P_n,B^d_\alpha)\ge c n^{2+\frac2\alpha(d-1)}, \quad 0<\alpha<1.
\]
For $0<\alpha<1$ the set $B^d_\alpha$ is not convex (see also Theorem~\ref{cube-in}). The ``cusps'' of $B^d_\alpha$ cause that the power $2+\frac2\alpha(d-1)\to\infty$ as $\alpha\to0+$.
\end{remark}

\begin{remark}
Remez-type inequalities can be used to derive Nikol'skii-type inequalities, see, e.g. the general framework in~\cite{DiPrRMMJ}. In particular, it is possible to obtain appropriate upper bounds on $C(\P_n,B^d_\alpha)$, $1<\alpha<2$, from the Remez inequality given in~\cite{Kr}, if one establishes that $B^d_\alpha$ is a $C^\alpha$-domain in terminology of~\cite{Kr}. Note that in the present paper we provide not only the matching lower bounds, but also introduce the ``ellipsoid with extension'' technique of Section~\ref{sect-upper} for upper bounds which works in situations where the $C^\alpha$-domain classification of~\cite{Kr} will lead to a much weaker estimate. A non-trivial example of such a situation is given in the next section.
\end{remark}

\section{Sharp subsets and further examples}\label{sect-sharp}
We will present an improvement of Theorem~\ref{boundary_const} and illustrate its application.
\begin{definition}\label{sharp_def}
For a compact set $D\subset\R^d$, a set $\Omega\subset D$ is a \emph{sharp subset} of $D$ if there exists a constant $c>0$ such that for any $\x\in D$ there exists an affine transformation $T$ with  $|\det T|\ge c$, satisfying
\[
 \x\in T(\Omega) \quad\text{and}\quad T(D)\subset D.
\]
\end{definition}
Trivially, $D$ is always a sharp subset of $D$ with $c=1$ as it is enough to take $T$ to be the identity. We are, however, interested in rather ``small'' sharp subsets. For instance, it is not hard to see that any vertex of a convex polytope is a sharp subset of that polytope. Another example was given in Lemma~\ref{boundary_conv_lemma}, which states that for any convex body $D$ the boundary $\partial D$ is a sharp subset of $D$ with $c=2^{-d}$. Repeating the steps of the proof of Theorem~\ref{boundary_const}, we obtain the following:
\begin{theorem}\label{sharp_th}
Let $\Omega$ be a sharp subset of a compact set $D\subset\R^d$. Then
\[
\max_{\x\in D} C(\P_n, D, \x)\le c^{-1} \max_{\x\in \Omega} C(\P_n,D,\x),
\]
where $c$ is the constant from Definition~\ref{sharp_def}.
\end{theorem}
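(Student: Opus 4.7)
The plan is to mimic the proof of Theorem~\ref{boundary_const} almost verbatim, replacing the appeal to Lemma~\ref{boundary_conv_lemma} by a direct appeal to Definition~\ref{sharp_def}. The strategy has three moving parts: (i) choose an affine map $T$ supplied by the sharpness of $\Omega$ that carries the extremum point into $T(\Omega)$ while keeping the image inside $D$; (ii) invoke the monotonicity of the Christoffel function under shrinking of the domain (Theorem~\ref{dt3.3}) to pass from $C(\P_n,D,\cdot)$ to $C(\P_n,T(D),\cdot)$; (iii) use the affine scaling identity for the Christoffel function of $\P_n$ (Theorem~\ref{l3.5}, which is available because $\P_n^T=\P_n$) to relate the maximum over $T(\Omega)$ to the maximum over $\Omega$ with a factor of $|\det T|^{-1}$.

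Concretely, I would argue as follows. By the continuity of $C(\P_n,D,\cdot)$ and compactness of $D$, fix a point $\tx\in D$ at which $\max_{\x\in D}C(\P_n,D,\x)$ is attained. Applying Definition~\ref{sharp_def} to $\tx$ gives an affine transformation $T$ with $|\det T|\ge c$ such that $\tx\in T(\Omega)$ and $T(D)\subset D$. From the containment $T(D)\subset D$ together with Theorem~\ref{dt3.3}, we obtain
\[
C(\P_n,D,\tx)\le C(\P_n,T(D),\tx),
\]
and since $\tx\in T(\Omega)$,
\[
C(\P_n,T(D),\tx)\le \max_{\x\in T(\Omega)}C(\P_n,T(D),\x).
\]
Finally, Theorem~\ref{l3.5} (applied point-by-point and then passing to the max) together with affine invariance of $\P_n$ gives
\[
\max_{\x\in T(\Omega)}C(\P_n,T(D),\x)=|\det T|^{-1}\max_{\y\in \Omega}C(\P_n,D,\y)\le c^{-1}\max_{\y\in \Omega}C(\P_n,D,\y),
\]
so that chaining the three inequalities yields the claim.

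There is no real obstacle here beyond verifying that the ingredients line up cleanly: the affine-invariance $\P_n^T=\P_n$ (already remarked after Theorem~\ref{l3.5}) is what allows Theorem~\ref{l3.5} to be used without introducing spurious factors from a transformed subspace, and the bound $|\det T|\ge c$ from Definition~\ref{sharp_def} is what produces the factor $c^{-1}$ in the statement. The only mildly delicate point is the passage from a pointwise identity in Theorem~\ref{l3.5} to a statement about the maxima over $\Omega$ and $T(\Omega)$, which is immediate once one notes that $T$ is a bijection $\Omega\to T(\Omega)$.
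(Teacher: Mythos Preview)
Your proposal is correct and follows essentially the same approach as the paper, which simply says to repeat the steps of the proof of Theorem~\ref{boundary_const} with Lemma~\ref{boundary_conv_lemma} replaced by Definition~\ref{sharp_def}. Your chain of inequalities via Theorems~\ref{dt3.3} and~\ref{l3.5} is exactly that argument, and your remarks about $\P_n^T=\P_n$ and the bijection $\Omega\to T(\Omega)$ correctly identify the only points needing verification.
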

Therefore, if we established that $\Omega$ is a sharp subset of $D$, then to compute the order of $\max_{\x\in D} C(\P_n,D,\x)$, it is sufficient to consider only $\x\in \Omega$.
\begin{remark}\label{boundary_only}
If $D$ is a convex body, then Lemma~\ref{boundary_conv_lemma} allows one to restrict verification of the condition from Definition~\ref{sharp_def} to $\x\in\partial D$. Indeed, suppose for any $\x\in\partial D$, there exists $T$ with $|\det T|\ge c>0$, $\x\in T(\Omega)$ and $T(D)\subset D$. Take any $\tx\in D$ and apply Lemma~\ref{boundary_conv_lemma} to obtain an affine transformation $T_1$ with $|\det T_1|\ge \frac1{2^d}$, $\tx\in T_1(\partial D)$ and $T_1(D)\subset D$. We take $\x:=T_1^{-1}(\tx)\in\partial D$, obtain the corresponding $T$, and consider the composition $T_2(\cdot):=T_1(T(\cdot))$. Then $|\det T_2|\ge \frac c{2^d}$, $\tx\in T_2(\Omega)$ and $T_2(D)\subset D$, and therefore $\Omega$ is a sharp subset of $D$ with the constant $\frac{c}{2^d}$.
\end{remark}

Let us introduce a notation for half the Euclidean ball:
\[
B^d_+:=\{(x_1,\dots,x_d)\in B^d_2: x_d\ge0\}.
\]
We have already considered $B^2_+$ in~\eqref{e7}. Now we illustrate the application of Theorem~\ref{sharp_th} for $B^3_+$.

\begin{lemma}\label{sharp_b3p}
The set $\{(1,0,0)\}$ is a sharp subset of $B^3_+$.
\end{lemma}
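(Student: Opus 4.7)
The plan is to invoke Remark~\ref{boundary_only}, which lets me verify Definition~\ref{sharp_def} only at boundary points of $B^3_+$ (since $B^3_+$ is convex), and then split $\partial B^3_+$ into the closed hemisphere $H = \{\x \in B^3_+ : \|\x\|_2 = 1\}$ and the relative interior of the flat disk $D^\circ = \{\x : x_3 = 0,\ \|\x\|_2 < 1\}$. Rotational symmetry about the $z$-axis preserves $B^3_+$ and has unit determinant, so in each case I may assume $\x$ lies in the $xz$-plane.

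For $\x = (\sin\phi, 0, \cos\phi) \in H$ with $\phi \in [0, \pi/2]$, let $R_\phi$ be the rotation of $\R^3$ in the $xz$-plane sending $(1,0,0)$ to $\x$, and take
\[
T(y) := \tfrac{1}{2}\x + \tfrac{1}{2} R_\phi(y).
\]
Then $T(1,0,0) = \x$, and since the linear part is $\tfrac{1}{2}R_\phi$, we have $|\det T| = 1/8$. The containment $T(B^3_+) \subset B^3_+$ reduces to checking $(T(y))_3 \ge 0$ (immediate from nonnegativity of $\sin\phi$, $\cos\phi$, $1+y_1$, and $y_3$) and $\|T(y)\|_2 \le 1$, which follows from the elementary identity $(\x, R_\phi(y)) = y_1$ together with $\|\x\|_2 = 1$ and $y_1 \le \|y\|_2 \le 1$.

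For $\x = (r, 0, 0) \in D^\circ$ with $r \in (-1, 1)$, I use a shifted (possibly reflected) isotropic scaling. When $r \ge 0$, set $\mu = (1+r)/2$ and $T(y) := \mu y + (r - \mu, 0, 0)$; when $r < 0$, set $\mu = (1-r)/2$ and $T(y) := (-\mu y_1, \mu y_2, \mu y_3) + (r + \mu, 0, 0)$. In both subcases $\mu \ge 1/2$, so $|\det T| = \mu^3 \ge 1/8$, and $T(B^3_+)$ is a half-ball of radius $\mu$ with flat face in $\{x_3 = 0\}$ whose center has distance $1-\mu$ from the origin, hence it lies inside $B^3_+$.

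The main subtlety is the hemisphere case for $\x$ near the north pole, where an inscribed Euclidean ball cannot be large enough to carry $(1,0,0)$ to $\x$ by a bounded-determinant similarity centered inside $B^3_+$. Inserting the rotation $R_\phi$ into the construction, rather than relying on a pure similarity, positions the ``apex'' $(1,0,0)$ of the transformed half-ball directly at $\x$ and bypasses this obstruction. Combining the two cases yields the constant $c = 1/8$ in Definition~\ref{sharp_def}, completing the proof.
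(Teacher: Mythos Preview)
Your proof is correct and follows essentially the same approach as the paper: the same reduction via Remark~\ref{boundary_only} and rotational symmetry, the same homothety centered at $(-1,0,0)$ for the flat disk, and the same composition of a half-scaling with a rotation in the $xz$-plane for the spherical part (your $T(y)=\tfrac12\x+\tfrac12R_\phi(y)$ is exactly the paper's $T=RH$). Your verification of $\|T(y)\|_2\le1$ via the identity $(\x,R_\phi(y))=y_1$ is a bit more explicit than the paper's ``it is easy to see,'' and your separate $r<0$ subcase is unnecessary (rotation by $\pi$ about the $z$-axis already reduces to $r\ge0$), but neither affects correctness.
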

\begin{proof}
Taking into account Remark~\ref{boundary_only}, we only consider $\x=(x_1,x_2,x_3)\in\partial B^3_+$.

First we suppose that $x_3=0$. Then, by symmetry (rotating about the $x_3$-axis), we can assume $x_2=0$ and $x_1\in[0,1]$. We define $T(\cdot)=(-1,0,0)+\frac{1+x_1}{2}((\cdot)-(-1,0,0))$, which is the homothety with coefficient $\frac{1+x_1}{2}\in[\frac12,1]$ and center at $(-1,0,0)$. Clearly, $T(1,0,0)=\x$ and $T(B^3_+)\subset B^3_+$, $c\ge\frac18$.

It remains to consider the case when $x_3>0$. As before, by symmetry (rotating about $x_3$-axis), we can assume $x_2=0$ and $x_1=\sqrt{1-x_3^2}\in[0,1)$. Then $x_1=\cos\phi$ and $x_3=\sin\phi$ for some $\phi\in(0,\frac\pi2]$. We define $T=RH$, where $H(\cdot)=(1,0,0)+\frac12((\cdot)-(1,0,0))$ is the homothety with the coefficient $\frac12$ and center at $(1,0,0)$, and $R(y_1,y_2,y_3)=(y_1\cos\phi-y_3\sin\phi,0,y_1\sin\phi+y_3\cos\phi)$ is the rotation counterclockwise by $\phi$ in the $y_1y_3$-plane. It is easy to see that $T(1,0,0)=\x$ and $T(B^3_+)\subset B^3_+$ (as $\phi\in(0,\pi/2]$), $c=\frac18$.
\end{proof}

\begin{remark}
It is natural to expect that $\{(1,0,\dots,0)\}$ is a sharp subset of $B^d_\alpha$, $1<\alpha<2$, but we could neither prove this fact nor locate it in the literature. Computations in Lemma~\ref{cone_ball_in_p_le_2} would have been much simpler if we only needed to consider $\x=\{(1,0,\dots,0)\}$. However, establishing that $\{(1,0,\dots,0)\}$ is a sharp subset of $B^d_\alpha$, $1<\alpha<2$, may, in fact, turn out to be much harder than directly proving Lemma~\ref{cone_ball_in_p_le_2}.
\end{remark}

We continue with $B^3_+$. We can inscribe an appropriate ellipsoid with extension into $B^3_+$, as shown below.
\begin{lemma}
Let $D=\{(x_1,x_2,x_3):x_1^2+x_2^2+x_3^2\le1,x_3\ge0\}$ be a half ball in $\R^3$. For every $n\ge1$, there exists an affine transformation $T$ on $\R^3$ with $|\det T|\ge (160n)^{-1}$, such that $TB_2^3\subset D$ and $T\v_n^3=(1,0,0)$.
\end{lemma}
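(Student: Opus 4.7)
The plan is to realize $T$ as an affine map that shears $B_2^3$ into a thin, tilted ellipsoid inscribed in $D$, whose boundary passes near the corner point $(1,0,0)$ while the affine extension sends $\v_n^3$ exactly to $(1,0,0)$. Near $(1,0,0)$ the boundary of $D$ consists of two pieces meeting transversally --- the sphere $\|\x\|_2 = 1$ and the plane $x_3 = 0$ --- so no Euclidean ball inscribed in $D$ can touch that point; we must squeeze in the $x_3$-direction. Since we must simultaneously enforce $T\v_n^3 = (1,0,0)$, a pure translation upward in $x_3$ is forbidden, and the construction couples the first and third coordinates through a shear. Writing $\nu := 1 + n^{-2}/3$, I would take
\[
T(y_1, y_2, y_3) = \bigl(1 - \alpha(\nu - y_1),\ \gamma y_2,\ \delta(\nu - y_1) + \beta y_3\bigr),
\]
so that $T\v_n^3 = (1,0,0)$ automatically and $|\det T| = \alpha\beta\gamma$.

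The specific choice $\alpha = \gamma = 1/2$, $\beta = 1/(4n)$, $\delta = \beta/\sqrt{\nu^2-1}$ yields $|\det T| = 1/(16n) \ge 1/(160n)$. To show $TB_2^3 \subset D$ one verifies for every $y \in B_2^3$: (i) $T(y)_3 \ge 0$, and (ii) $|T(y)| \le 1$. For (i), the minimum of $\delta(\nu - y_1) + \beta y_3$ over $B_2^3$ equals $\delta\nu - \sqrt{\delta^2 + \beta^2}$, and non-negativity is equivalent to $\delta^2(\nu^2 - 1) \ge \beta^2$, which is exactly the equation defining our $\delta$. For (ii), I would substitute $t := \nu - y_1 \in [\nu - 1, \nu + 1]$ and maximize the resulting expression over $(y_2, y_3)$ satisfying $y_2^2 + y_3^2 \le 1 - (\nu - t)^2$, taking $y_2^2$ at its extreme and completing the square in $y_3$ (which is concave in $y_3$ since $\beta^2 < 1/4$). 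This reduces (ii) to the single-variable quadratic inequality
\[
A\, t^2 + 2(\nu - 2)\, t + (1 - \nu^2) \le 0, \qquad t \in [\nu - 1, \nu + 1],
\]
where $A := 4\delta^2/(1 - 4\beta^2) = 9 n^4 / \bigl((6n^2 + 1)(4n^2 - 1)\bigr)$.

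The main obstacle is verifying this quadratic bound uniformly in $n \ge 1$. Since $A > 0$ the quadratic is convex, and non-positivity on the interval reduces to checking the two endpoints; these factor conveniently as $F(\nu - 1) = (\nu - 1)[A(\nu - 1) + \nu - 5]$ and $F(\nu + 1) = (\nu + 1)[A(\nu + 1) + \nu - 3]$. Both $A$ and $\nu$ are decreasing in $n$, so the bracketed quantities attain their largest values at $n = 1$, where $A = 3/7$ and $\nu = 4/3$: direct computation gives $A(\nu - 1) + \nu - 5 \le 1/7 - 11/3 < 0$ and $A(\nu + 1) + \nu - 3 \le 1 - 5/3 = -2/3 < 0$. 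A minor caveat is that if the critical $y_3^\ast$ in the maximization happens to fall outside $[-\rho, \rho]$, the interior-optimum value we used still bounds the true maximum from above by concavity, so the derived quadratic condition is if anything stronger than needed.
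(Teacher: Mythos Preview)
Your construction is correct and follows the same scheme as the paper: an affine map of the form $T(y)=(1-\alpha(\nu-y_1),\,\gamma y_2,\,\delta(\nu-y_1)+\beta y_3)$ that couples the first and third coordinates through a shear, after which one verifies $T(y)_3\ge 0$ and $\|T(y)\|_2\le 1$ for $y\in B_2^3$. The paper simply chooses the fixed parameters $\alpha=\tfrac12$, $\gamma=\tfrac18$, $\delta=\tfrac18$, $\beta=\tfrac{1}{10n}$ (so $|\det T|=\tfrac{1}{160n}$ exactly) and checks the two conditions by direct elementary bounds rather than your optimization/endpoint argument; your sharper tuning yields the better constant $|\det T|=\tfrac{1}{16n}$, but the underlying idea is identical.
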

\begin{proof}
We define $T$ as follows:
\[
T(x_1,x_2,x_3)=\left(1-\frac{1+\frac{n^{-2}}3-x_1}2, \frac{x_2}8, \frac{n^{-1}x_3}{10}+\frac{1+\frac{n^{-2}}3-x_1}8\right).
\]
Then $\det A=\frac12\cdot\frac18\cdot\frac{n^{-1}}{10}$, and clearly $T\v_n^3=T(1+n^{-2}/3,0,0)=(1,0,0)$. By convexity of $D$, it remains to show that $TB^3_2\subset D$. Let $(x_1,x_2,x_3)\in B^3_2$, i.e., $x_1^2+x_2^2+x_3^2\le 1$. Denote $(y_1,y_2,y_3)=T(x_1,x_2,x_3)$, and then we need to prove $y_3\ge0$ and $y_1^2+y_2^2+y_3^2\le1$. Let $t:=1-x_1\in[0,2]$. For $j=2,3$, we obtain
\[
|x_j|\le\sqrt{1-x_1^2}=\sqrt{2t-t^2}\le\sqrt{2t}.
\]
We also have
\[
\frac{1+\frac{n^{-2}}3-x_1}{8}=\frac{\frac{n^{-2}}3+t}{8}\ge\frac{\sqrt{\frac{n^{-2}}3t}}4=\frac{n^{-1}\sqrt{2t}}{4\sqrt{6}}\ge\frac{n^{-1}\sqrt{2t}}{10}.
\]
The above two inequalities yield
\[
0\le -\frac{n^{-1}|x_3|}{10} + \frac{n^{-1}\sqrt{2t}}{10} \le y_3 \le \frac{n^{-1}|x_3|}{10} + \frac{\frac{n^{-2}}3+t}{8} < \frac{\frac{n^{-2}}3+t}{4},
\]
in particular, $y_3\ge0$. The remaining inequality is obtained as follows:
\begin{align*}
y_1^2+y_2^2+y_3^2 -1&=-\left(\frac{n^{-2}}3+t\right)+\frac14\left(\frac{n^{-2}}3+t\right)^2+\left(\frac{x_2}8\right)^2+y_3^2\\
&\le-\frac{n^{-2}}3-t+\frac14\left(\frac{n^{-2}}3+t\right)^2+\frac{t}{32}+\frac1{16}\left(\frac{n^{-2}}3+t\right)^2\\
&\le -\frac{n^{-2}}3-\frac{31}{32}t+\frac{15}{16}\left(\frac{n^{-2}}3+t\right)=-\frac{n^{-2}}{48}-\frac{t}{32}<0,
\end{align*}
where in the last line $\frac{n^{-2}}3+t<3$ was used. This completes the proof.
\end{proof}

Now we are ready to compute the order of $C(\P_n,B^3_+)$.
\begin{theorem}
\[
C(\P_n,B^3_+)\approx n^5
\quad\text{and}\quad
\sup_{\phi\in\P_n}\|\phi\|_{L_\infty(B^3_+)}/\|\phi\|_{L_2(B^3_+)}\approx n^{5/2}.
\]
\end{theorem}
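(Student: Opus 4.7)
My plan is to combine the sharp subset technology of Theorem~\ref{sharp_th} with the extension lemma just established for the upper bound, and to use the tensor-product construction of Theorem~\ref{main-lower} for the lower bound; the Nikol'skii statement will then fall out of Theorem~\ref{dt3.1}.

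For the upper bound, I will first invoke Lemma~\ref{sharp_b3p}, which says that $\{(1,0,0)\}$ is a sharp subset of $B^3_+$. By Theorem~\ref{sharp_th}, this reduces the problem to bounding $C(\P_n, B^3_+, (1,0,0))$. The ellipsoid-inscription lemma proved immediately above the theorem produces, for every $n \ge 1$, an affine map $T$ with $|\det T| \ge (160n)^{-1}$ such that $TB_2^3 \subset B^3_+$ and $T\v_n^3 = (1,0,0)$. Feeding this into Theorem~\ref{main-ell} (with $d=3$) yields
\[
C(\P_n, B^3_+, (1,0,0)) \le c |\det T|^{-1} n^{4} \le c' n^{5},
\]
and Theorem~\ref{sharp_th} then upgrades this to $\max_{\x \in B^3_+} C(\P_n, B^3_+, \x) \le c'' n^5$.

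For the lower bound, I take the axis-aligned affine map $T(z_1,z_2,z_3) = (z_1, z_2, (z_3+1)/2)$. Then $T([-1,1]^3) = [-1,1]^2 \times [0,1]$, which clearly contains $B^3_+$, and $|\det T| = 1/2$. Choosing $\y = (1, 0, -1) \in [-1,1]^3$, we get $T\y = (1,0,0) \in B^3_+$, so Theorem~\ref{main-lower} applies and gives
\[
C(\P_n, B^3_+, (1,0,0)) \;\ge\; c \, |\det T|^{-1} \rho_n(1)^{-1} \rho_n(0)^{-1} \rho_n(-1)^{-1}.
\]
Since $\rho_n(\pm 1) = n^{-2}$ and $\rho_n(0) = n^{-2} + n^{-1} \approx n^{-1}$, the product is of order $n^{2} \cdot n \cdot n^{2} = n^{5}$, which matches the upper bound.

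Combining the two inequalities gives $C(\P_n, B^3_+) \approx n^5$, and the Nikol'skii statement $\sup_{\phi \in \P_n} \|\phi\|_{L_\infty(B^3_+)}/\|\phi\|_{L_2(B^3_+)} \approx n^{5/2}$ follows at once from Theorem~\ref{dt3.1} (equivalently, from Corollary~\ref{dc3.2} together with $C(\P_n, B^3_+) \gtrsim n^5$, which is achieved at $(1,0,0)$). No step in this argument is really an obstacle: the genuinely technical work was done in Lemma~\ref{sharp_b3p} (the geometric verification that $(1,0,0)$ is a sharp subset, forcing the supremum to the equator) and in the ellipsoid-extension lemma (which is what converts the $n^{d+1} = n^4$ estimate of Theorem~\ref{ball-nik} into an $n^5$ bound at the boundary); once both are in hand the proof is essentially bookkeeping.
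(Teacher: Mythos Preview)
Your proof is correct and follows essentially the same route as the paper: the upper bound via Lemma~\ref{sharp_b3p}, Theorem~\ref{sharp_th}, the ellipsoid-inscription lemma, and Theorem~\ref{main-ell}; the lower bound via Theorem~\ref{main-lower} at $(1,0,0)$. The only cosmetic difference is that the paper uses the translation $T(x_1,x_2,x_3)=(x_1,x_2,x_3+1)$ (so $T([-1,1]^3)=[-1,1]^2\times[0,2]$, $|\det T|=1$) rather than your half-height box, which changes nothing of substance.
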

\begin{proof}
The previous lemma, Lemma~\ref{sharp_b3p}, Theorem~\ref{sharp_th}, and Theorem~\ref{main-ell}  immediately imply $C(\P_n,B^3_+)\le c n^5$. For the lower estimate, use Theorem~\ref{main-lower} with $T(x_1,x_2,x_3)=(x_1,x_2,x_3+1)$ and $\y=(1,0,-1)$. Then $C(\P_n,B^3_+)^{-1}\le c \rho_n(1)\rho_n(0)\rho_n(-1)\le \tilde c n^{-5}$.
\end{proof}

\begin{remark}
It is not hard to generalize the above approach to show that $C(\P_n,B^d_+)\approx n^{d+2}$ for $d\ge4$. To avoid technicalities we decided to restrict the exposition to $d=3$, as this case is sufficient to illustrate the method.
\end{remark}

\begin{bibsection}
\begin{biblist}

\bib{Bo-Er}{article}{
   author={Borwein, P.},
   author={Erd{\'e}lyi, T.},
   title={Nikolskii-type inequalities for shift invariant function spaces},
   journal={Proc. Amer. Math. Soc.},
   volume={134},
   date={2006},
   number={11},
   pages={3243--3246},
%   issn={0002-9939},
%   review={\MR{2231907 (2007e:41014)}},
%   doi={10.1090/S0002-9939-06-08533-9},
}

\bib{De-Lo}{book}{
   author={DeVore, R. A.},
   author={Lorentz, G. G.},
   title={Constructive approximation},
   series={Grundlehren der Mathematischen Wissenschaften [Fundamental
   Principles of Mathematical Sciences]},
   volume={303},
   publisher={Springer-Verlag, Berlin},
   date={1993},
   pages={x+449},
%   isbn={3-540-50627-6},
%   review={\MR{1261635 (95f:41001)}},
%   doi={10.1007/978-3-662-02888-9},
}

\bib{Di14}{article}{
   author={Ditzian, Z.},
   title={New moduli of smoothness on the unit ball, applications and computability},
   journal={J. Approx. Theory},
   volume={180},
   date={2014},
   pages={49--76},
}

\bib{DiPrRMMJ}{article}{
   author={Ditzian, Z.},
   author={Prymak, A.},
   title={Nikol'skii inequalities for Lorentz spaces},
   journal={Rocky Mountain J. Math.},
   volume={40},
   date={2010},
   number={1},
   pages={209--223},
%   issn={0035-7596},
%   review={\MR{2607114 (2011c:46059)}},
%   doi={10.1216/RMJ-2010-40-1-209},
}

\bib{DiTi}{article}{
   author={Ditzian, Z.},
   author={Tikhonov, S.},
   title={Ul'yanov and Nikol'skii-type inequalities},
   journal={J. Approx. Theory},
   volume={133},
   date={2005},
   number={1},
   pages={100--133},
%   issn={0021-9045},
%   review={\MR{2122270 (2005k:41039)}},
%   doi={10.1016/j.jat.2004.12.008},
}

\bib{Ka}{article}{
   author={Kamzolov, A. I.},
   title={Approximation of functions on the sphere $S^n$},
   language={Russian},
   journal={Serdica},
   volume={10},
   date={1984},
   number={1},
   pages={3--10},
%   issn={0204-4110},
%   review={\MR{764160 (85j:41049)}},
}

\bib{KLS}{article}{
   author={Kopotun, K. A.},
   author={Leviatan, D.},
   author={Shevchuk, I. A.},
   title={Convex polynomial approximation in the uniform norm: conclusion},
   journal={Canad. J. Math.},
   volume={57},
   date={2005},
   number={6},
   pages={1224--1248},
%   issn={0008-414X},
%   review={\MR{2178560 (2006f:41018)}},
%   doi={10.4153/CJM-2005-049-6},
}

\bib{Kr}{article}{
   author={Kro{\'o}, Andr{\'a}s},
   title={On Remez-type inequalities for polynomials in ${\bf R}^m$ and
   ${\bf C}^m$},
   language={English, with English and Russian summaries},
   journal={Anal. Math.},
   volume={27},
   date={2001},
   number={1},
   pages={55--70},
%   issn={0133-3852},
%   review={\MR{1824977 (2002g:41021)}},
%   doi={10.1023/A:1010558106777},
}

\bib{Ne-To}{article}{
   author={Nevai, P.},
   author={Totik, V.},
   title={Sharp Nikolskii inequalities with exponential weights},
   language={English, with Russian summary},
   journal={Anal. Math.},
   volume={13},
   date={1987},
   number={4},
   pages={261--267},
%   issn={0133-3852},
%   review={\MR{927797 (89h:41034)}},
%   doi={10.1007/BF01909432},
}

\bib{Ni}{article}{
   author={Nikol'skii, S. M.},
   title={Inequalities for entire functions of finite degree and their
   application in the theory of differentiable functions of several
   variables},
   language={Russian},
   conference={
      title={Trudy Mat. Inst. Steklov., v. 38},
   },
   book={
      series={Trudy Mat. Inst. Steklov., v. 38},
      publisher={Izdat. Akad. Nauk SSSR, Moscow},
   },
   date={1951},
   pages={244--278},
%   review={\MR{0048565 (14,32e)}},
}

\bib{Sz}{book}{
  author={Szeg{\H{o}}, G.},
   title={Orthogonal polynomials},
   edition={4},
   note={American Mathematical Society, Colloquium Publications, Vol.
   XXIII},
   publisher={American Mathematical Society, Providence, R.I.},
   date={1975},
   pages={xiii+432},
%   review={\MR{0372517 (51 \#8724)}},
}

\bib{To}{article}{
   author={Totik, V.},
   title={Christoffel functions on curves and domains},
   journal={Trans. Amer. Math. Soc.},
   volume={362},
   date={2010},
   number={4},
   pages={2053--2087},
%   issn={0002-9947},
%   review={\MR{2574887 (2011b:30006)}},
%   doi={10.1090/S0002-9947-09-05059-4},
}

\bib{Tr}{article}{
   author={Trigub, R. M.},
   title={One-sided and comonotone approximations by algebraic polynomials
   with positive coefficients},
   language={Russian, with Russian summary},
   conference={
      title={Approximation theory and its applications %(Ukrainian)
       (Kiev,
      1999)},
   },
   book={
      series={Pr. Inst. Mat. Nats. Akad. Nauk Ukr. Mat. Zastos.},
      volume={31},
      publisher={Nats\=\i onal. Akad. Nauk Ukra\"\i ni, \=Inst. Mat., Kiev},
   },
   date={2000},
   pages={461--476},
%   review={\MR{1921218 (2003e:41011)}},
}

\bib{Tr-Be}{book}{
   author={Trigub, R. M.},
   author={Bellinsky, E. S.},
   title={Fourier analysis and approximation of functions},
%   note={[Belinsky on front and back cover]},
   publisher={Kluwer Academic Publishers, Dordrecht},
   date={2004},
   pages={xiv+585},
%   isbn={1-4020-2341-3},
%   review={\MR{2098384 (2005k:42002)}},
%   doi={10.1007/978-1-4020-2876-2},
}

\bib{Walther}{article}{
   author={Walther, G.},
   title={On a generalization of Blaschke's rolling theorem and the
   smoothing of surfaces},
   journal={Math. Methods Appl. Sci.},
   volume={22},
   date={1999},
   number={4},
   pages={301--316},
%   issn={0170-4214},
%   review={\MR{1671447 (99m:46174)}},
%   doi={10.1002/(SICI)1099-1476(19990310)22:4<301::AID-MMA42>3.3.CO;2-D},
}

\end{biblist}
\end{bibsection}

\end{document}